\documentclass[12pt]{amsart}
\usepackage{graphicx} 
\usepackage{caption}
\usepackage{subcaption}
\usepackage{url} 
\usepackage[hidelinks]{hyperref}
\usepackage{amsfonts,amsthm,latexsym,amsmath,amssymb,amscd,amsmath,epsf,a4wide} 
\usepackage{verbatim}
\usepackage{tikz}
\usetikzlibrary{arrows,shapes,matrix}
\usetikzlibrary{shapes.geometric}
\usetikzlibrary{positioning}
\usetikzlibrary{matrix,arrows}
\usetikzlibrary{through}
\usetikzlibrary{patterns}
\usetikzlibrary{calc}

\theoremstyle{plain}
\newtheorem{theorem}{\bf Theorem}[section]
\newtheorem{lemma}[theorem]{\bf Lemma}
\newtheorem{proposition}[theorem]{\bf Proposition}
\newtheorem{corollary}[theorem]{\bf Corollary}

\newtheorem{idea}{\bf Idea}

\theoremstyle{definition}
\newtheorem{definition}[theorem]{Definition}

\theoremstyle{remark}
\newtheorem{remark}[theorem]{Remark}

\numberwithin{equation}{section}

\makeindex

\newcommand{\PP}{\mathbb{P}}
\newcommand{\NN}{\mathbb{N}}
\newcommand{\II}{\mathbb{I}}

\newcommand{\K}{\mathcal{K}}
\newcommand{\kk}{\mathbf{k}}

\newcommand{\onetotwo}{\xi}

\newcommand{\T}{\mathcal{T}}
\newcommand{\DT}{\widehat{\mathcal{T}}}
\newcommand{\A}{\mathcal{A}}
\newcommand{\B}{\mathcal{B}}
\newcommand{\C}{\mathcal{C}}
\renewcommand{\P}{\mathcal{P}}
\renewcommand{\S}{\mathcal{S}}

\newcommand{\SP}{\mathcal{SP}}
\newcommand{\NC}{\mathcal{NC}}
\newcommand{\CM}{\mathcal{CM}}
\newcommand{\PT}{\mathcal{PT}}
\newcommand{\IT}{\mathcal{IT}}

\newcommand{\DDP}{\widehat{\mathcal{DP}}}
\newcommand{\DP}{\mathcal{DP}}
\newcommand{\Dyck}{\mathcal{D}yck}
\newcommand{\AP}{\mathcal{AP}}

\newcommand{\PF}{\mathcal{PF}}

\newcommand{\intvert}{\mathnormal{IntVert}}

\newcommand{\IdentityPath}{D^\emptyset}
\newcommand{\wcomp}{\mathcal{WC}omp}
\newcommand{\comp}{\mathcal{C}omp}
\newcommand{\Par}{\mathcal{P}ar}
\newcommand{\sym}{\mathfrak{S}}

\DeclareMathOperator{\leftDes}{left} 

\def\newop#1{\expandafter\def\csname #1\endcsname{\mathop{\rm #1}\nolimits}}

\newop{length}
\newop{signat}
\newop{caverns}
\newop{Area}
\newop{area}
\newop{dom}
\newop{child}
\newop{indeg}

\title[]{signature Catalan combinatorics}

\author[C. Ceballos]{Cesar Ceballos}
\address{Faculty of Mathematics, University of Vienna, Vienna, Austria}
\email{cesar.ceballos@univie.ac.at}

\author[R. S. Gonz\'alez D'Le\'on]{Rafael S. Gonz\'alez D'Le\'on}
\address{Escuela de Ciencias Exactas e Ingenier\'ia, Universidad Sergio Arboleda, Bogot\'a, 
Colombia}
\email{rafael.gonzalezl@usa.edu.co}

\thanks{C. Ceballos was supported by the Austrian Science Foundation FWF, grant F 5008-N15, in the framework
of the Special Research Program Algorithmic and Enumerative Combinatorics"; he was also partially supported by York University and a Banting Postdoctoral Fellowship of the Government of Canada. R.~S.~Gonz\'alez~D'Le\'on was supported during this project by University of Kentucky, York University and Universidad Sergio Arboleda and he is grateful for their support.}

\begin{document}
\begin{abstract}
The Catalan numbers constitute one of the most important sequences in combinatorics. Catalan objects have been generalized in various directions, including the classical Fuss-Catalan objects and the rational Catalan generalization of Armstrong-Rhoades-Williams. We propose a wider generalization of these families indexed by a composition $s$ which is motivated by the combinatorics of planar rooted trees; when $s=(2,...,2)$ and $s=(k+1,...,k+1)$ we recover the classical Catalan and Fuss-Catalan combinatorics, respectively. Furthermore, to each pair $(a,b)$ of relatively prime numbers we can associate a signature that recovers the combinatorics of rational 
Catalan objects.
We present explicit bijections between the resulting $s$-Catalan objects, and a fundamental recurrence that generalizes the fundamental recurrence of the classical Catalan numbers.
Our framework allows us to define signature generalizations of parking functions which coincide with the generalized parking functions studied by Pitman-Stanley and Yan, as well as generalizations of permutations which coincide with the notion of Stirling multipermutations introduced by Gessel-Stanley. 
Some of our constructions differ from the ones of Armstrong-Rhoades-Williams, however as a byproduct of our extension, we obtain the additional notions of rational permutations and rational trees.
\end{abstract}

\maketitle
\tableofcontents
\section{Introduction}\label{section:introduction}

A \emph{permutation} $\sigma$ of the set $[n]:=\{1,2,\dots,n\}$ is a bijection 
$\sigma:[n]\rightarrow [n]$. A permutation can be represented in the one-line notation 
$\sigma_1\sigma_2\cdots \sigma_n$ where $\sigma_i:=\sigma(i)$. We denote the 
set of permutations 
of $[n]$ by $\sym_n$. This set has the structure of a group under composition of permutations and 
it is commonly known as the \emph{symmetric group}. It is an introductory exercise in enumerative
combinatorics to show that the cardinality of  $\sym_n$ is given by the factorial numbers 
$n!:=1\cdot 2 \cdots (n-1)\cdot n$. 
There are other families of objects that are in bijection with permutations. For example, there 
is a bijection between $\sym_n$  and the set~$\IT_n$ of binary trees drawn in the plane with a distinguished vertex or \emph{root} and with 
labels in the internal nodes that are increasing 
when walking away from the root. The trees in~$\IT_n$ are also known as 
\emph{increasing rooted planar binary trees} (see \cite[Chapter 1]{Stanley2012} for the bijection 
and Figure \ref{figure:example_type_A_combinatorics} for an example of the bijection when $n=3$). 
We will be denoting by $\S_n$ a generic family of objects that is in bijection with $\sym_n$, that 
is, $|\S_n|=n!$.

We say that a permutation $\sigma$ is 
\emph{$312$-avoiding} if there are no indices $i<j<k$ such that $\sigma_j < \sigma_k < \sigma_i$. 
We denote by $\sym_n(312)$ the set of $312$-avoiding permutations in $\sym_n$. It is known  that 
the set $\sym_n(312)$ is in bijection with the set $\DP_n$ of lattice paths in $\NN\times\NN$ from 
$(0,0)$ to $(n,n)$ taking only north steps $(0,1)$ and east steps $(1,0)$, and such that at 
any given point the number of north steps taken is greater than or equal to the number of east steps 
taken (see Figure \ref{figure:example_type_A_combinatorics} for an example of the bijection when 
$n=3$). These lattice paths are also known as \emph{Dyck paths}. Any family of objects in bijection 
with either $\sym_n(312)$ or~$\DP_n$ is known as a Catalan family named after the Belgian 
mathematician Eug\`{e}ne Charles Catalan who studied them. Catalan objects are probably the most 
intriguing objects in combinatorics since they appear in connection with numerous fields of 
mathematics in many different forms. Many mathematicians have studied 
enumerative, geometric and algebraic occurrences of the Catalan families. In particular, Richard 
Stanley has collected and curated a selection of these families during years of study 
\cite{Stanley2015}. Let us denote by $\C_n$ a generic family of Catalan objects. It is known 
that $|\C_n|=\frac{1}{n+1}\binom{2n}{n}=\frac{1}{2n+1}\binom{2n+1}{n}=\frac{(2n)!}{(n+1)!n!}$ what 
is known as the $n$-th 
\emph{Catalan number}.

From objects in $\DP_n$ we can create a different family $\DDP_n$ of objects obtained by decorating the north steps of a Dyck path with a permutation of $[n]$ in such a way that 
the consecutive sequences of north steps (also known as \emph{runs}) have increasing values from 
bottom to top (see Figure \ref{figure:example_type_A_combinatorics}). The objects obtained in this 
way are known as \emph{decorated Dyck paths} and happen to be in bijection with another famous family of combinatorial objects
known as 
\emph{parking functions}. The set  $\PF_n$ of parking functions of $n$ consists of sequences of $n$ 
nonnegative integers $(p_1,p_2,\dots,p_n)$ with the property that after being rearranged in weakly 
increasing order $p_{j_1}\le p_{j_2}\le \cdots \le p_{j_n}$ they satisfy $p_{j_i}<i$ for all 
$i\in[n]$. Figure \ref{figure:example_type_A_combinatorics} shows an example of the bijection 
between decorated Dyck paths and parking functions for $n=3$. Parking functions were studied by 
Konheim and Weiss \cite{KonheimWeiss1966} using a different description that explains their name. 
There are other popular families of objects in bijection with $\PF_n$, for example, the 
set of (nonplanar nonrooted) trees on the vertex set $\{0\}\cup [n]$ is in bijection with $\PF_n$ (see 
\cite{DeOliveiraLasVergnas2010,Kreweras1980,PerkinsonYangYu2016}). Let us denote any generic family 
of objects in bijection with $\PF_n$ by $\P_n$. It is also known that $|\P_n|=(n+1)^{n-1}$, see for 
example \cite{Stanley1997}.

The families counted by the sequences $n!$, $\frac{1}{n+1}\binom{2n}{n}$ and $(n+1)^{n-1}$ for 
$n\ge0$ have intrigued mathematicians for centuries and in a nutshell intuitively we can describe 
their relation by the following idea.

\begin{idea}\label{idea:firstidea}
A family $\C_n$ is obtained as a sub-family of $\S_n$ satisfying a suitable restriction. A family $\P_n$ can be obtained by extending a family $\C_n$ with a decoration of its objects with the elements of $[n]$, also subject to a suitable restriction on the decoration.
\end{idea}

 \begin{figure}
\centering
\input{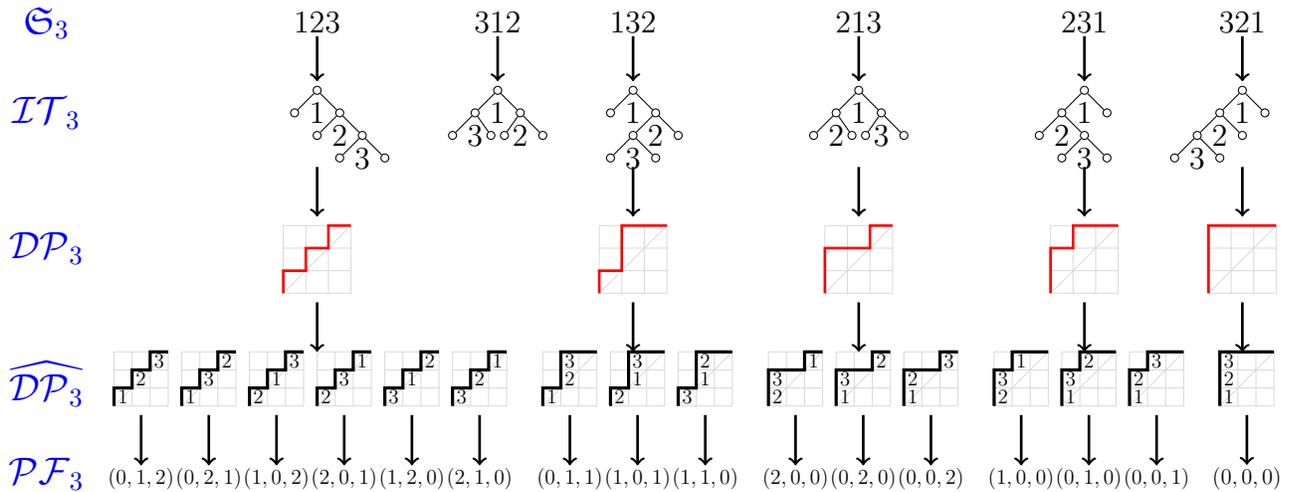}
\caption{Maps between classical combinatorial objects in type A}
\label{figure:example_type_A_combinatorics}
\end{figure}

Idea \ref{idea:firstidea} is the underlying idea in the story of the classical combinatorial 
objects mentioned above. As mentioned in \cite{ArmstrongRhoadesWilliams2013}, there are two general directions 
in which we can generalize this story. The first considers that all the families of objects 
discussed, $\S_n$, $\C_n$ and $\P_n$, are related directly to the combinatorics of the symmetric 
group that happens to be the Weyl group of Coxeter type A. Hence, we can wonder if there are 
corresponding objects for other Coxeter types. Work in this direction has been done by some 
authors like Reiner~\cite{Reiner1997}, Athanasiadis~\cite{Athanasiadis2005}, Fomin and 
Reading~\cite{FominReading2005}, Armstrong~\cite{Armstrong2009}, Williams~\cite{Willams2013} and 
others, and it is currently an active area of research.
The second direction to generalize these families of objects is to consider Catalan 
objects as a phenomenon that depends on two relatively prime numbers $a,b$ such that a family 
$\C_{a,b}$ parametrized by these numbers has cardinality $\frac{1}{a+b}\binom{a+b}{b}$, also known 
as the 
\emph{rational Catalan number}. When $a=n$ and $b=n+1$ we recover the classical Catalan numbers,
and when $a=n$ and $b=kn+1$ we obtain another classical generalization known as the 
\emph{Fuss-Catalan numbers}. Even thought ingredients of the rational Catalan story have appeared 
previously in the literature, Armstrong, Rhodes and Williams started a more systematic study of the 
rational Catalan objects $\C_{a,b}$ in \cite{ArmstrongRhoadesWilliams2013}. In this generalization 
of the classical story there are rational versions $\P_{a,b}$  of the parking objects in 
$\P_n$ (see for example \cite{ArmstrongLoehrWarrington2016}),
but it is not clear what is the generalization for the permutation objects in $\S_n$ to a rational 
version $\S_{a,b}$.

In this paper we start a systematic study of a further generalization of the rational Catalan objects. Our generalization is indexed by a composition $s=(s_1,s_2,\dots,s_{a})$ that we call a \emph{signature}. 
When $s=(2,\dots,2)$ and $s=(k+1,\dots,k+1)$ we recover the classical Catalan and Fuss-Catalan objects respectively. The rational Catalan objects are obtained by the signature whose $i$-th entry is the number of boxes in the $i$-th row of an $b \times a$ grid that are crossed by the main diagonal of the grid. 

The central idea for our generalization of Catalan objects relies on the combinatorics of planar rooted trees. For a given 
signature $s$ we associate a family $\T_s$ of planar rooted trees that we call 
\emph{$s$-trees}. Based on their structural properties one can obtain generalizations of other classical Catalan objects such as Dyck paths, $312$-avoiding permutations, noncrossing partitions, complete noncrossing matchings, triangulations of a polygon, and parenthesizations.
We also propose generalizations of permutations and parking functions objects in this general set up. 
The generalized permutations are encoded by a family of increasingly labeled planar rooted trees. These labeled trees are in bijection with a generalization of the Stirling permutations studied by Gessel and Stanley back in the seventies~\cite{GesselStanley1978} (see also 
\cite{Park1994-1,Park1994-2,Park1994-3,GrahamKnuthPatashnik1994,JansonKubaPanholzer2011,KubaPanholzer2011,NovelliThibon2014,Dleon2015,RemmelWilson2015}).  
The generalized parking functions are obtained as appropriate decorations of $s$-Catalan objects as it is classically done in the case of rational parking functions~\cite{ArmstrongLoehrWarrington2016}. These also coincide with the ones originally studied by Pitman and Stanley~\cite{StanleyPitman2002} and Yan~\cite{Yan2000,Yan2001} in a slightly more general form.

\section{Preliminaries}
\subsection{Compositions and weak compositions}
We denote by $\PP$ the set of positive integers and by $\NN$ the set of nonnegative integers. A \emph{weak composition} is a finite sequence 
$\mu=(\mu(1),\mu(2),\dots, \mu(\ell))$ of numbers $\mu(i) \in \NN$. For a weak composition $\mu$ we 
define its \emph{sum} $|\mu|:=\sum_i \mu(i)$ and its \emph{length} $\ell(\mu):=\ell$. For example 
for $\mu=(2,0,3,4,0,1)$ we have that $\ell(\mu)=6$ and $|\mu|=10$.
If $|\mu|=n$ for some $n \in \NN$, we say that $\mu$ is a \emph{weak composition of $n$}. We denote 
by $\wcomp$ the set of weak compositions and $\wcomp_n$ the set of weak compositions of $n$. 
A \emph{composition} is a weak composition $\mu$ such that $\mu(i)\ne 0$ for all $i\in [\ell]$, in 
other words, a composition is a finite sequence of entries in $\PP$. We denote by $\comp$ the set 
of 
compositions and $\comp_n$ the set of compositions of~$n$. 
An \emph{(integer) partition} $\lambda$ of $n$ (denoted $\lambda \vdash n$) is a composition of $n$ 
whose entries are nonincreasing, i.e., $\lambda=(\lambda(1) \ge \lambda(2) \ge \cdots)$. We denote 
by $\Par$ the set of partitions and $\Par_n$ the set of partitions of $n$.

We can define several partial orders on the set of (weak) compositions. We introduce the two 
orderings that we will be using in this article.
For  $\mu,\nu \in \wcomp$ we say that $\mu$ is a \emph{refinement} of $\nu$ if $\nu$ can be 
obtained 
from $\mu$ by adding adjacent parts. For example, $(3,2,1,1,5,2,2)$ is a refinement of $(6,1,7,2)$ 
since $(6,1,7,2)=(3+2+1,1,5+2,2)$. Refinement defines a partial order in $\wcomp$ (also in 
$\comp$) and we say that 
$\mu\le\nu$ if $\mu$ is a refinement of $\nu$. For $\mu, \nu \in \wcomp$ we say that $\mu 
\le_{\dom} \nu$ in \emph{dominance order} if $\mu(1)+\mu(2)+\cdots+\mu(i) \le 
\nu(1)+\nu(2)+\cdots+\nu(i)$ for all $i$.  For example $(1,1,4,2) \le_{\dom} (1,2,3,3)$ 
since $1\le 1$, $1+1\le1+2$, $1+1+4\le1+2+3$ and $1+1+4+2\le1+2+3+3$. For $\mu, \nu\in\wcomp_n$ for 
some $n$, such 
that $\mu \le_{\dom} \nu$, the \emph{dominance difference} $\nu\setminus_{\dom}\mu$ of 
$\nu$ 
and $\mu$ is the weak composition of length $\ell(\nu)-1$ defined
$(\nu\setminus_{\dom}\mu)(i):=(\nu(1)+\cdots+\nu(i))- (\mu(1)+\cdots+\mu(i))$ for 
$i=1,\dots,\ell(\nu)-1$. 
We also consider a natural operation in $\wcomp$. For $\mu,\nu 
\in \wcomp$ we let $\mu\oplus\nu$ be the weak composition formed by the \emph{concatenation} of 
$\mu$ and 
$\nu$. For example,  $(3,2,1)\oplus(1,5,2,2)=(3,2,1,1,5,2,2)$. Sometimes we will use the notation
$\mu^{+i}$ or $\mu^{-i}$ to denote the weak composition obtained from $\mu$ by adding or 
substracting $i$ to the last part of $\mu$ respectively.

\subsection{Catalan objects}
The Catalan numbers constitute one of the most important sequences in combinatorics:
\[
1, 1, 2, 5, 14, 42, 132, 429, 1430, 4862, 16796, 58786, \dots
\]
The $n$-th term of this sequence is commonly denoted by $C_n$. It is given by the simple formula $C_n= \frac{1}{n+1}\binom{2n}{n}=\frac{1}{2n+1}\binom{2n+1}{n}=\frac{(2n)!}{(n+1)!n!}$, and is completely determined by the fundamental recurrence 
\[
C_{n+1}=\sum_{k=0}^nC_{k}C_{n-k}, \quad C_0=1.
\]
The Catalan numbers are known to count a great variety of objects in mathematics~\cite{Stanley2015}. Among the most remarkable ones are:
\begin{enumerate}
\item planar binary trees with $n$ internal nodes;
\item Dyck paths in an $n\times n $ grid;
\item $312$-avoiding permutations of $[n]$;
\item noncrossing partitions of $[n]$;
\item noncrossing matchings of $[2n]$;
\item triangulations of an $(n+2)-gon$;
\item parenthesizations of $n+1$ consecutive letters.
\end{enumerate}

Some of these families of combinatorial objects have been generalized in various directions. This includes families that are counted by the Fuss-Catalan numbers $\frac{1}{mn+1}\binom{(m+1)n}{n}$, or more generally, by the rational Catalan numbers $C_{a,b}=\frac{1}{a+b}{a+b \choose a}$. 
 
The main purpose of this paper is to start a systematic study of a wider generalization indexed by a composition $s=(s_1,s_2, \dots,s_a)$. The members of these generalized families will be referred to as $s$-Catalan objects.

\subsection{$s$-Catalan objects}
The $s$-Catalan objects corresponding to the Catalan families mentioned above are described very naturally in terms of any composition $s$, which encodes certain combinatorial information. For instance, planar binary trees can be generalized to arbitrary planar rooted trees, where the signature encodes the number of children of each internal node of the tree when read in certain order (preorder). Another example is that of Dyck paths, which can be generalized to lattice paths that lie weakly above a certain ribbon shape determined by $s$. 
The precise definitions and motivations for the generalized $s$-Catalan families will be presented in the coming sections. As an insight, some examples of the resulting combinatorial objects are illustrated in Figure~\ref{fig_sCatalanZoo}. 

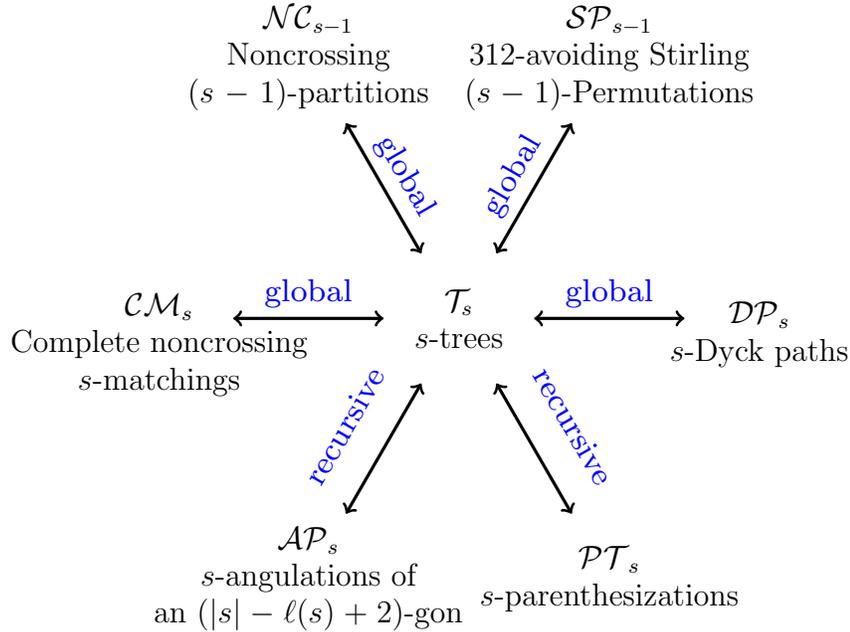
\begin{figure}[htb]
\begin{tikzpicture}[block_center/.style ={text width=0.3\textwidth, text centered}]

\tikzstyle{every node}=[circle,inner sep=-6pt, minimum width=0pt]

\node [block_center] (permutations) at (60:4) { $\SP_{s-1}$\\$312$-avoiding Stirling $(s-1)$-Permutations};
\node [block_center,inner sep=-26pt] (trees) at (0:0) { $\T_s$\\$s$-trees};
\node [block_center] (paths) at (-3:4){ $\DP_s$\\$s$-Dyck paths};
\node [block_center] (matchings)at (185:4) {$\CM_{s}$\\Complete noncrossing \\$s$-matchings };
\node [block_center] (partitions) at (120:4){ $\NC_{s-1}$\\Noncrossing \\$(s-1)$-partitions}; 
\node [block_center] (angulations) at (240:4){ $\AP_{s}$\\$s$-angulations of\\an 
$(|s|-\ell(s)+2)$-gon}; 
\node [block_center] (parenthesizations) at (300:4){ $\PT_{s}$\\$s$-parenthesizations}; 

\draw [<->,very thick] (300:1)--(300:3)  node [midway, above, sloped,scale=1.5] (TextNode) {\color{blue} \tiny recursive};
\draw [<->,very thick] (180:1)--(180:3)  node [midway, above, sloped,scale=1.5] (TextNode) {\color{blue} \tiny global};
\draw [<->,very thick] (0:1)--(0:3)  node [midway, above, sloped,scale=1.5] (TextNode) {\color{blue} \tiny global};
\draw [<->,very thick] (120:1)--(120:3) node [midway, above, sloped,scale=1.5] (TextNode) {\color{blue} \tiny global};
\draw [<->,very thick] (240:1)--(240:3)  node [midway, above, sloped,scale=1.5] (TextNode) {\color{blue} \tiny recursive};
\draw [<->,very thick] (60:1)--(60:3) node [midway, above, sloped,scale=1.5] (TextNode) {\color{blue} \tiny global};

\end{tikzpicture}
\caption{Bijections between $s$-Catalan objects.}
\label{fig:bijections}
\end{figure}

\begin{figure}[htbp]
\begin{center}
\begin{tikzpicture}[scale=1]
\draw (0,0) grid[step=(30:10)] (17.35,15);
\draw (0,15)--(0,20)--(17.35,20)--(17.35,15);
\tikzstyle{every node}=[scale=0.7]
\node[color=red,scale=1.5] () at (9,20.5) {\Large \bf The $s$-Catalan Zoo};

\node[color=blue,align=left,scale=1.2] () at (12,15.5) {\Large \bf $s$-Trees};

\node[color=blue,align=left] () at (7,10.5) {\Large \bf $s$-Dyck paths};

\node[color=blue,align=left,text width=132] () at (7,5.5) {\Large \bf Noncrossing $(s-1)$-partitions};

\node[color=blue,align=left,text width=200] () at (15,5.5) {\Large \bf Complete noncrossing $s$-matchings};

\node[color=blue,align=left,text width=180] () at (7,0.5) {\Large \bf  $s$-Angulations of a polygon};


\node[color=blue,align=left] () at (15,0.5) {\Large \bf  $s$-Parenthesizations};

\node[color=blue,align=left,text width=180] () at (14.5,10.5) {\Large \bf  $312$-avoiding Stirling $(s-1)$-permutations};
\begin{scope}[xshift=225,yshift=470,thick,scale=0.4]
\tikzstyle{every node}=[circle, draw,
                        inner sep=0.5pt, minimum width=4pt,font=\small]
\node (a1) at (2,6){};
\node (b1) at (-1,4){};
\node (b2) at (2,4){};
\node (b3) at (5,4){};

\node (c1) at (-2.5,2) {};
\node (c2)  at (-1.5,2){};
\node (c3)  at (-0.5,2){};
\node (c4) at (0.5,2){};
\node (d1)  at  (4.5,2){};
\node (d2) at (5.5,2) {};

\node (f1) at (-2,0){};
\node (f2)  at (-1,0){};
\node (f3)  at (0,0){};
\node (f4) at (1,0){};
    
\node (e1) at (2.5,0){};
\node (e2)  at (3.5,0){};
\node (e3)  at (4.5,0){};
\node (e4) at (5.5,0){};
\node (e5) at (6.5,0){};

\draw (a1)--(b1);
\draw (a1)--(b2);
\draw	 (a1)--(b3);
\draw (b1)--(c1);
\draw (b1)--(c2);
\draw (b1)--(c3);
\draw (b1)--(c4);
\draw (b3)--(d1);
\draw (b3)--(d2);
\draw (c3)--(f1);
\draw (c3)--(f2);
\draw (c3)--(f3);
\draw (c3)--(f4);
\draw (d1)--(e1);
\draw (d1)--(e2);
\draw (d1)--(e3);
\draw (d1)--(e4);
\draw (d1)--(e5);

\end{scope}


\begin{scope}[xshift=45,yshift=330,scale=0.4]
\edef \signature{3,4,4,2,5}

\def\row{0}
\def\col{0}

\foreach \part in \signature{
\pgfmathsetmacro \newcol {\col+\part-1}
\pgfmathparse{\col+\part-1}
\global\let\newcol\pgfmathresult
\pgfmathparse{\row+1}
\global\let\newrow\pgfmathresult
\draw[fill, color=gray!10] (\col,\row) rectangle (\newcol+1,\newrow);
\global\let\row\newrow
\global\let\col\newcol
}
\draw[very thin] (0, 0) grid (\col+1, \row);

\def\rowpa{0}
\def\colpa{0}
\edef \pa{0,2,6,0,6}
\foreach \var in \pa{
\pgfmathparse{\colpa+\var}
\global\let\newcolpa\pgfmathresult
\pgfmathparse{\rowpa+1}
\global\let\newrowpa\pgfmathresult
\draw [line width=3, color=red]
(\colpa, \rowpa)--(\colpa,\newrowpa)--(\newcolpa,\newrowpa);

\global\let\rowpa\newrowpa
\global\let\colpa\newcolpa
}


\end{scope}
\begin{scope}[xshift=120,yshift=220,scale=0.5]
 
   \tikzstyle{every node}=[inner sep=0pt, minimum width=4pt]
   \def\n{13}
   
   \def\inicio{3} 
   \def\col{blue}
      \path[fill=\col!50] (90-360/\n*\inicio+360/\n+1:2.8cm)
      \foreach \x in {3,4,5}{ -- (90-360/\n*\x+360/\n+1:2.8cm)};
      
    \def\inicio{1} 
    \def\col{red}
      \path[fill=\col!50] (90-360/\n*\inicio+360/\n+1:2.8cm)
      \foreach \x in {1,2,6,7,8}{ -- (90-360/\n*\x+360/\n+1:2.8cm)};
      
      \def\inicio{9} 
    \def\col{gray}
      \path[fill=\col!50] (90-360/\n*\inicio+360/\n+1:2.8cm)
      \foreach \x in {9,10,11,12,13}{ -- (90-360/\n*\x+360/\n+1:2.8cm)};
   
   \foreach \x in {1,2,...,13}
   \draw {(90-360/\n*\x+360/\n+1:3cm) node {\x}};
      
\end{scope}
\begin{scope}[xshift=370,yshift=220,scale=0.5]
 
   \tikzstyle{every node}=[inner sep=0pt, minimum width=4pt]
   \def\n{18}
   
   \def\inicio{2} 
   \def\col{blue}
      \path[fill=\col!50] (90-360/\n*\inicio+360/\n+1:2.8cm)
      \foreach \x in {2,3,4,9}{ -- (90-360/\n*\x+360/\n+1:2.8cm)};
      
    \def\inicio{1} 
    \def\col{red}
      \path[fill=\col!50] (90-360/\n*\inicio+360/\n+1:2.8cm)
      \foreach \x in {1,10,11}{ -- (90-360/\n*\x+360/\n+1:2.8cm)};
      
      \def\inicio{5} 
    \def\col{gray}
      \path[fill=\col!50] (90-360/\n*\inicio+360/\n+1:2.8cm)
      \foreach \x in {5,6,7,8}{ -- (90-360/\n*\x+360/\n+1:2.8cm)};
      
      \def\inicio{12} 
    \def\col{brown}
      \path[draw,very thick, fill=\col!50,\col!50] (90-360/\n*\inicio+360/\n+1:2.8cm)
      \foreach \x in {12,18}{ -- (90-360/\n*\x+360/\n+1:2.8cm)};

 \def\inicio{13} 
    \def\col{green}
      \path[fill=\col!50] (90-360/\n*\inicio+360/\n+1:2.8cm)
      \foreach \x in {13,14,15,16,17}{ -- (90-360/\n*\x+360/\n+1:2.8cm)};

   \foreach \x in {1,2,...,\n}
   \draw {(90-360/\n*\x+360/\n+1:3cm) node {\x}};
   
\end{scope}

\begin{scope}[xshift=120,yshift=75,thick,scale=0.5]
 
   \tikzstyle{every node}=[inner sep=0pt, minimum width=4pt,scale=1.6]
   \def\n{15}
   
   \foreach \x in {1,2,...,\n}
   \draw {(90-360/\n*\x+360/\n+1:3cm) node (n\x)[label={90-360/\n*\x+360/\n+1:\tiny \x}]{}};

   
\draw[dashed, thick,red] (n1) -- (n2)node [midway, above, sloped,red] (TextNode) {$e$};
\draw (n2) -- (n3);
\draw (n3) -- (n4);
\draw (n4) -- (n5);
\draw (n5) -- (n6);
\draw (n6) -- (n7);
\draw (n7) -- (n8);
\draw (n8) -- (n9);
\draw (n9) -- (n10);
\draw (n10) -- (n11);
\draw (n11) -- (n12);
\draw (n12) -- (n13);
\draw (n13) -- (n14);
\draw (n14) -- (n15);
\draw (n15) -- (n1);
\draw (n1) -- (n9);
\draw (n2) -- (n8);
\draw (n3) -- (n8);
\draw (n10) -- (n14);
\end{scope}


\begin{scope}[xshift=370,yshift=70,thick,scale=0.32]
\node at (0,0) {\LARGE \bf $(\star\star(\star\star\star\star)\star)\star((\star\star\star\star\star)\star)$};
\end{scope}

\begin{scope}[xshift=370,yshift=360,thick,scale=0.32]
\node at (0,0) {\Huge \bf $2233321155554$};
\end{scope}
\end{tikzpicture}

\caption{Examples of $s$-Catalan objects for $s=(3,4,4,2,5)$.}

\label{fig_sCatalanZoo}
\end{center}
\end{figure}

As we shall see, these generalized families have the same intrinsic combinatorial structure (Sections~\ref{sec_sCatalanzooAndBijections} and~\ref{sec_fundamentalRecurrence}). They are all counted by the same determinantal formula (Section~\ref{sec_enumeration}), and are determined by the same fundamental recurrence of $s$-Catalan structures: 
\[
C_s=\sum_{(s)}C_{s_1}C_{s_2}\cdots C_{s_{s(1)}}, 
\]
where the sum is over all sequences $(s_1,s_2,\dots,s_{s(1)})$ of compositions such that 
$s=(s(1))\oplus s_1\oplus s_2\oplus\cdots \oplus s_{s(1)}$ and where $C_{\emptyset}=1$ (see Section~\ref{sec_fundamentalRecurrence}).

The classical Catalan and Fuss-Catalan combinatorics can be recovered when $s=(2,2,\dots , 2)$ and $s=(k+1,k+1,\dots, k+1)$ respectively. The rational Catalan combinatorics corresponds to the signature $s$, where $s(i)$ is the number of boxes in row $i$ in a~$b\times a$ grid that are crossed by the main diagonal. 

One of our main results is to show that the generalized $s$-Catalan families are bijectively equivalent (see Figure 
\ref{fig:bijections}).

\begin{theorem}\label{thm_sCatalanFamiliesEquivalent}
The following generalized familes are in bijective correspondence:
 \begin{enumerate}
\item $s$-trees; \label{item_trees}
\item $s$-Dyck paths; \label{item_Dyckpaths}
\item $312$-avoiding Stirling $(s-\bf{1})$-permutations; \label{item_permutations}
\item noncrossing $(s-\bf{1})$-partitions; \label{item_noncrossingpartitions}
\item Complete noncrossing $s$-matchings; \label{item_matchings}
\item $s$-angulations of an $(|s|-\ell(s)+2)$-gon; \label{item_angulations}
\item $s$-parenthesizations; \label{item_parenthesizations}
\end{enumerate}
In items (\ref{item_permutations}) and (\ref{item_noncrossingpartitions}) it is assumed that $s(i)\geq 2$ for all $i$.
\end{theorem}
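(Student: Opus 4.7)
The plan is to use the family $\T_s$ of $s$-trees as the central hub and construct, for each of the other six families, a bijection to or from $\T_s$. The arrows in Figure~\ref{fig:bijections} already suggest which bijections are natural to build \emph{globally} (by reading an invariant off an $s$-tree in a single traversal) versus \emph{recursively} (by inducting on a root decomposition that matches the fundamental recurrence $C_s=\sum_{(s)}C_{s_1}\cdots C_{s_{s(1)}}$ stated above). Because all seven cardinalities should be equal to $C_s$, it suffices to exhibit six bijections to $\T_s$, with no direct map needed between the peripheral families.

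For the global bijections, I would define direct readings of an $s$-tree $T\in\T_s$ as follows. The $s$-Dyck path is obtained by performing a depth-first walk of $T$ in preorder, recording a north step for each descent into a child and an east step for each ascent; since the internal nodes are visited in preorder with signature $s$, the resulting path sits exactly weakly above the ribbon shape cut out by $s$, and invertibility follows by reading off the runs of north steps. The $312$-avoiding Stirling $(s-\mathbf{1})$-permutation is obtained by labeling internal nodes $1,2,\dots$ in preorder and then recording the label of each internal node $s(i)-1$ times during the boundary walk, once between each consecutive pair of children; the planarity of $T$ forces $312$-avoidance, while the Stirling condition (each label appears in a contiguous block once we remove lower labels) follows because a subtree's walk is sandwiched between the first and last visit to its root. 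The noncrossing $(s-\mathbf{1})$-partition in $\NC_{s-\mathbf{1}}$ is read off by declaring the positions of the $s(i)-1$ non-leftmost children of each internal node to form a block, and the complete noncrossing $s$-matching in $\CM_s$ is obtained analogously using leaves as endpoints.

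For the recursive bijections (to $s$-angulations and to $s$-parenthesizations), the strategy is to exploit the fundamental recurrence directly. Given $T\in\T_s$ with root of arity $s(1)$, the root decomposes $T$ into an ordered tuple $(T_1,\dots,T_{s(1)})$ of $s_i$-trees with $s=(s(1))\oplus s_1\oplus\cdots\oplus s_{s(1)}$. An $s$-angulation of an $(|s|-\ell(s)+2)$-gon has a distinguished face incident to a fixed boundary edge; cutting along the other edges of that face splits the polygon into $s(1)$ smaller polygons that carry $s_i$-angulations, giving the recursion, and induction on $|s|$ yields the bijection. The same template handles $s$-parenthesizations: the outermost parenthesis corresponds to the root, and its $s(1)$ sub-parenthesizations correspond to the subtrees $T_1,\dots,T_{s(1)}$.

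The main obstacle is not any one of the seven maps in isolation but rather the bookkeeping that guarantees all definitions of \emph{$s$-Catalan object} are mutually consistent; in particular, one must check that the signature $s$ read off an $s$-Dyck path (as row widths of its ribbon shape) agrees with the composition of children-counts of the internal nodes of the corresponding $s$-tree in preorder, and similarly for the other families. Once this is pinned down, each bijection can be verified by induction on $|s|$, with the base case of a trivial tree being immediate. I expect the most delicate case to be item (\ref{item_permutations}), since the Stirling multipermutation condition together with $312$-avoidance must be matched against the signature in a way that is sensitive to the labeling convention for children; this is presumably why the theorem restricts items (\ref{item_permutations}) and (\ref{item_noncrossingpartitions}) to signatures with $s(i)\geq 2$, since otherwise internal nodes with a single child would be invisible in the permutation/partition data and the map could not be inverted.
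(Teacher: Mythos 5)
Your overall architecture is exactly the paper's: $\T_s$ is the hub, items (2)--(5) are reached by labeling an $s$-tree and reading the labels in preorder, and items (6)--(7) are handled recursively via the fundamental recurrence. However, two of your six maps are mis-specified in ways that actually fail, not just in bookkeeping. First, your tree-to-path map (``north step for each descent into a child, east step for each ascent'') is the classical encoding of a plane tree with $|s|$ edges as a balanced path with $|s|$ north and $|s|$ east steps; it does not land in $\DP_s$, whose elements have only $\ell(s)$ north steps and $|s|-\ell(s)+1$ east steps and end at $(|s|-\ell(s)+1,\ell(s))$. The correct global reading is to assign the letter $N$ to each internal node and $E$ to each leaf and read the nodes in preorder; the verification that the result stays above the ribbon then goes through the area labeling of the tree, which is the content the paper actually has to prove.

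Second, and more seriously, your noncrossing-partition map --- ``the positions of the $s(i)-1$ non-leftmost children of each internal node form a block'' --- always produces a partition with exactly $\ell(s)$ blocks, one per internal node, so it is neither injective on $\T_s$ nor surjective onto $\NC_{s-\mathbf{1}}$ (already for $s=(2,2)$ both trees would map to $\{\{1\},\{2\}\}$, while $\{\{1,2\}\}$ is never attained). The missing idea is that the caverns must be grouped along maximal chains of leftmost children: a block consists of all caverns of an internal node $v$ together with those of every node reachable from $v$ by repeatedly passing to the leftmost child (the set $\leftDes(v)$ in the paper's notation), with one block for each internal node that is not itself a leftmost child. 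This grouping is what makes the number of blocks vary and what makes the map invertible (the inverse reassembles the tree from uniquely determined ``left descendant trees''). Your matching map is also off target: the paper's blocks are the preorder labels of \emph{all} children (leaves and internal nodes alike) of each internal node, giving a partition of $[|s|]$, whereas ``using leaves as endpoints'' would only partition a set of size $|s|-\ell(s)+1$. The recursive treatment of angulations and parenthesizations, and your explanation of the hypothesis $s(i)\geq 2$, are correct and match the paper.
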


The bijections between $s$-trees~\eqref{item_trees} and items~\eqref{item_Dyckpaths}-\eqref{item_matchings} are presented in Section~\ref{sec_sCatalanzooAndBijections}. The idea behind these bijections is always essentially the same: we assign some labels to the tree and then read them in some order (preorder). Figure~\ref{fig_bijections_sTrees} presents a quick descriptive illustration. We include detailed proofs explaining facts about these bijections for completeness, but some readers may like to skip some of the proofs and convince themselves from the description in Figure~\ref{fig_bijections_sTrees}. 
The bijections between $s$-trees~\eqref{item_trees} and items~\eqref{item_angulations}-\eqref{item_parenthesizations} are explained using a fundamental recurrence in Section~\ref{sec_fundamentalRecurrence}.
Bijections with $s$-Dyck paths are explained in the Appendix~\ref{sec_bijectionsWithDyckPaths}, see Figure~\ref{fig_bijections_sDyckpaths} for a descriptive illustration. 

\begin{figure}[htbp]
\begin{center}
\input{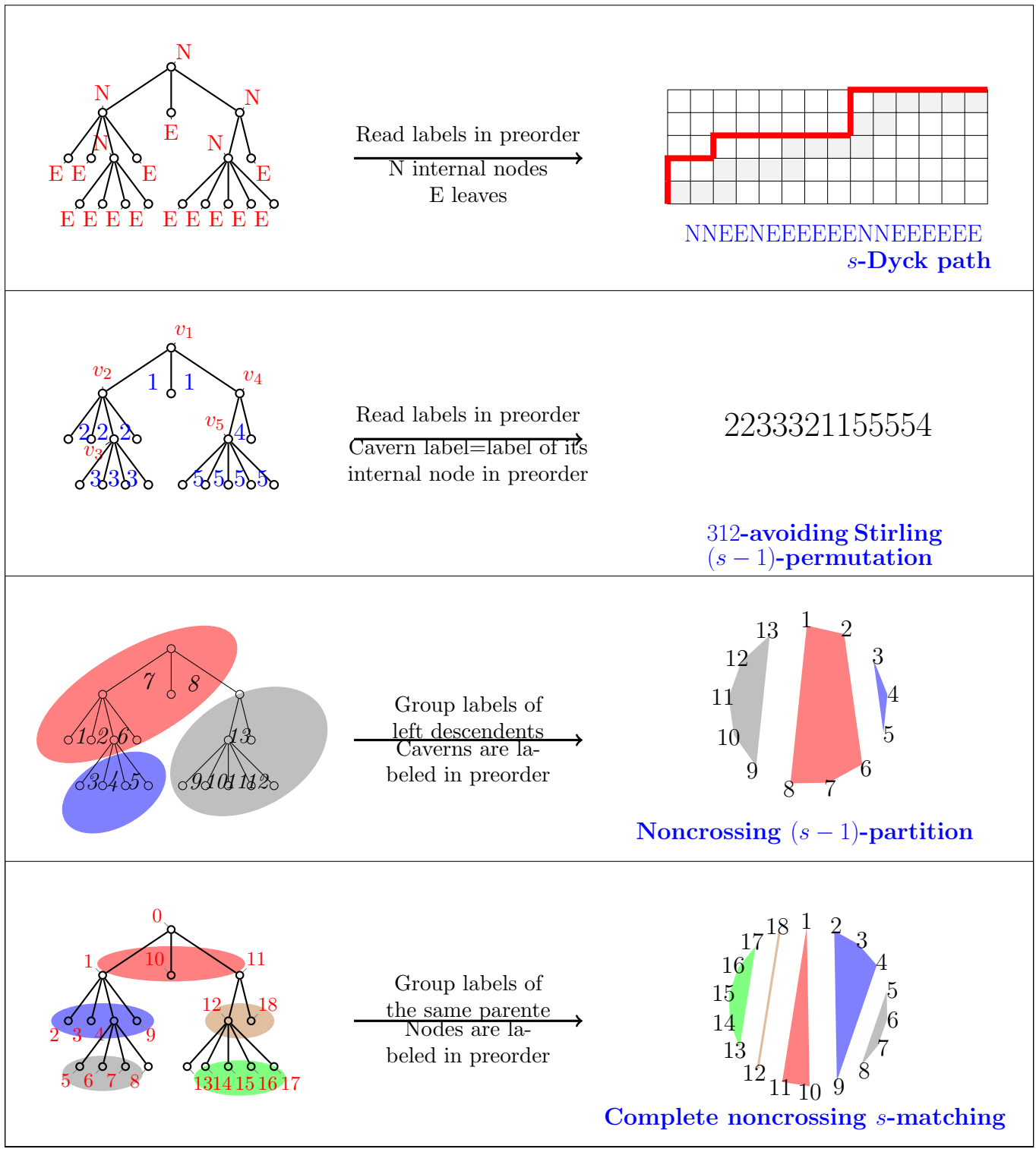}
\caption{Examples of the bijections with $s$-trees.}
\label{fig_bijections_sTrees}
\end{center}
\end{figure}

\section{The $s$-Catalan zoo and bijections}
\label{sec_sCatalanzooAndBijections}

\subsection{Planar rooted trees}\label{section:}

A tree is a graph that has no loops or cycles. We say that a tree is 
rooted if 
one of its nodes is specially marked and called the \emph{root}. For two nodes $x$ and
$y$ on a rooted tree $T$, $x$ is said to be the \emph{parent} of $y$ (and $y$ the 
\emph{child} of $x$) if $x$ is the node that follows $y$ in the unique path from $y$ to the 
root. We call $y$ a \emph{descendant} of~$x$ if~$x$ belongs to the unique path from $y$ to the root. A node is called a \emph{leaf} if it has no children, otherwise is said to be 
\emph{internal}. The \emph{degree} $\deg(x)$ of a node $x$ in a rooted tree $T$ is defined to be 
the cardinality of the set of children of $x$. A rooted tree $T$ is said to be planar if for every 
internal node $x$ of $T$ the 
set of children of $x$ is totally ordered. For $n \ge 1$ we denote by $\T_n$ the set of all planar 
rooted trees with $n$ internal nodes and by $\T$ the set of all planar rooted trees. There is a 
unique tree $\bullet$ that has only one node that is at the same time its leaf and its root, we 
call this tree the \emph{identity tree}. For trees $T_1,T_2,\dots,T_k \in \T$ with roots 
$r_1,r_2,\dots,r_k$ respectively, we denote by $[T_1,T_2,\dots,T_k]$ the tree in $\T$ 
constructed by adding a new root $r$ and attaching the trees $T_1,T_2,\dots,T_k$ to $r$ in a way 
that the 
order of the children of $r$ is given by $r_i$ being the $i$th children for $i \in [k]$ (see 
Figure~\ref{fig:examplewedge}). 
We are going to fix an order in which to read the nodes of a tree $T=[T_1,T_2,...,T_k] \in \T$. 
Let $r$ be the root of $\T$, we say that $T$ is read or traversed in \emph{preorder} if we visit 
first the root $r$ and then we visit in preorder each of the trees 
$T_i$ in a way that $T_i$ is traversed before $T_j$ if $i<j$ for $i,j \in [k]$.

\begin{figure}[h]
 \centering

\begin{tikzpicture}[scale=0.8]

\tikzstyle{every node}=[draw,scale=1]

    \draw [circle,inner sep=1pt,color=black] (2,2)  node (r){$r$};
  
\tikzstyle{every node}=[inner sep=1pt, minimum width=14pt,scale=1]

    \draw (0,0)  node (t1){$T_1$};
    \draw (1,0)  node (t2){$T_2$};
    \draw (3,0)  node (t3){$T_{k-1}$};
    \draw (4,0)  node (t4){$T_{k}$};
    
    \draw (r) --  (t1) ;
    \draw (r) --  (t2) ;
    \draw (r) --  (t3) ;
    \draw (r) --  (t4) ;
    \draw [dotted] (1.5,0.5) --  (2.5,0.5) ;

\end{tikzpicture}

\caption{Example of $[T_1,T_2,\dots, T_{k}]$}
\label{fig:examplewedge}
\end{figure}
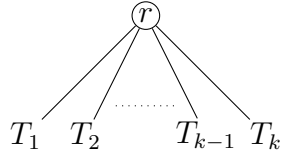

\noindent
 Let $v_1,v_2,\dots,v_a$ be the internal nodes of $T$ listed in preorder, we define the 
\emph{signature} of $T$ to be the composition 
$$\signat(T):=(\deg(v_1),\deg(v_2),\dots,\deg(v_a)).$$
The signature of the identity tree is defined 
as the empty composition, i.e., $\signat(\bullet):=\emptyset$.

\begin{definition}[$s$-trees]
\label{def_sTrees}
For any $s \in \comp$, 
an $s$-tree is a planar rooted tree with signature~$s$.
We denote by 
\[\T_s=\{T\in \T \, \mid \, \signat(T)=s\}\]
the set of \emph{$s$-trees}. 
\end{definition}

Figure~\ref{fig:examplestree} illustrates an $s$-tree $T$ with signature $s=(3,4,4,2,5)$.

\begin{figure}
\centering
\begin{tikzpicture}[thick,scale=0.6]
\tikzstyle{every node}=[circle, draw,
                        inner sep=0.5pt, minimum width=4pt,font=\small]
\node (a1) at (2,6)[pin={[color=red]135:$v_1$}]{};
\node (b1) at (-1,4)[pin={[color=red]135:$v_2$}]{};
\node (b2) at (2,4){};
\node (b3) at (5,4)[pin={[color=red]45:$v_4$}]{};

\node (c1) at (-2.5,2) {};
\node (c2)  at (-1.5,2){};
\node (c3)  at (-0.5,2)[pin={[color=red]45:$v_3$}]{};
\node (c4) at (0.5,2){};
\node (d1)  at  (4.5,2)[pin={[color=red]135:$v_5$}]{};
\node (d2) at (5.5,2) {};

\node (f1) at (-2,0){};
\node (f2)  at (-1,0){};
\node (f3)  at (0,0){};
\node (f4) at (1,0){};
    
\node (e1) at (2.5,0){};
\node (e2)  at (3.5,0){};
\node (e3)  at (4.5,0){};
\node (e4) at (5.5,0){};
\node (e5) at (6.5,0){};

\draw (a1)--(b1);
\draw (a1)--(b2);
\draw	 (a1)--(b3);
\draw (b1)--(c1);
\draw (b1)--(c2);
\draw (b1)--(c3);
\draw (b1)--(c4);
\draw (b3)--(d1);
\draw (b3)--(d2);
\draw (c3)--(f1);
\draw (c3)--(f2);
\draw (c3)--(f3);
\draw (c3)--(f4);
\draw (d1)--(e1);
\draw (d1)--(e2);
\draw (d1)--(e3);
\draw (d1)--(e4);
\draw (d1)--(e5);

\end{tikzpicture}
\caption{Example of a tree in $\T_{(3,4,4,2,5)}$}
\label{fig:examplestree}
\end{figure}
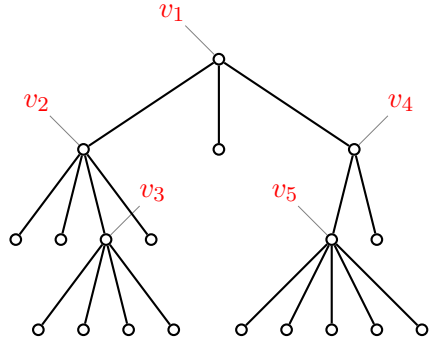

\subsection{Dyck paths}
We consider \emph{lattice paths} in $\NN \times \NN$ starting at $(0,0)$ and taking only \emph{east 
steps} E (in the direction $(1,0)$) and \emph{north steps}~N (in the direction $(0,1)$). For 
positive relatively prime integers $a$ and~$b$, a \emph{rational Dyck path} or an 
\emph{$(a,b)$-Dyck path} is a lattice path with endpoint $(b,a)$ wich stays above the 
diagonal $y=\frac{a}{b}x$. See Figure~\ref{fig:examplerationalpath} for an example of a 
rational Dyck path with $(a,b)=(5,8)$. 

Rational Dyck paths were introduced by 
Armstrong, Rhoades and Williams in~\cite{ArmstrongRhoadesWilliams2013} where they started the 
systematic combinatorial study of rational Catalan combinatorics. This definition has as special 
cases the classical \emph{Dyck paths} that occur for $n\ge 1$ when $a=n$ and $b=n+1$ and more 
generally the \emph{Fuss-Catalan} generalization of Dyck paths that occur when $a=n$ and $b=kn+1$ 
for a fixed $k \in \PP$. 

Note that in the $b \times a$ grid the diagonal $y=\frac{a}{b}x$ crosses certains cells. Those 
cells define a ribbon shape that cannot be crossed by any rational Dyck path in order to stay 
above the diagonal (see the gray shape in Figure~\ref{fig:examplerationalpath}). Let $s \in \comp$ 
be defined such that $s(i)$ is the number of cells in row $i$ crossed by the diagonal for 
each $i\in [a]$, we call $s$ the \emph{degree sequence} associated to the pair $(a,b)$. 
For any rational pair $(a,b)$ with residue $r=b-\lfloor \frac{b}{a} \rfloor a$ it is not difficult 
to check that the degree sequence is given by
\begin{align}\label{equation:definitionofrationalsignature}
   s(i) =  \left\lceil\frac{b}{a}\right\rceil + \chi(0 < ir \text{ (mod a)}< r ),
\end{align}
where $\lfloor x \rfloor$ and $\lceil x \rceil$ are the floor and ceiling functions
respectively, and $\chi$ is the function
\[
   \chi(S) = \left\{
     \begin{array}{lr}
       1  : &S \text{ is true }\\
       0  : &\text{otherwise}.
     \end{array}
   \right.
\]

For example for $(5,8)$ the degree sequence is $(2,3,2,3,2)$ since $\lceil\frac{8}{5}\rceil=2$, 
$r=8-\lfloor\frac{8}{5}\rfloor 5=3$ and $3(1,2,3,4,5)=(3,1,4,2,0) \text { (mod 5)}$ 
(see Figure~\ref{fig:examplerationalpath}).

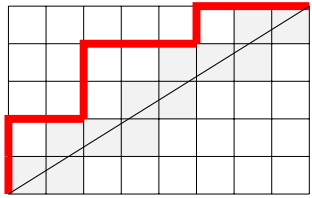
\begin{figure}
\centering
\begin{tikzpicture}[scale=0.5]
\edef \signature{2,3,2,3,2}

\def\row{0}
\def\col{0}

\foreach \part in \signature{
\pgfmathsetmacro \newcol {\col+\part-1}
\pgfmathparse{\col+\part-1}
\global\let\newcol\pgfmathresult
\pgfmathparse{\row+1}
\global\let\newrow\pgfmathresult
\draw[fill, color=gray!10] (\col,\row) rectangle (\newcol+1,\newrow);
\global\let\row\newrow
\global\let\col\newcol
}
\draw[very thin] (0, 0) grid (\col+1, \row);

\def\rowpa{0}
\def\colpa{0}
\edef \pa{0,2,0,3,3}
\foreach \var in \pa{
\pgfmathparse{\colpa+\var}
\global\let\newcolpa\pgfmathresult
\pgfmathparse{\rowpa+1}
\global\let\newrowpa\pgfmathresult
\draw [line width=3, color=red]
(\colpa, \rowpa)--(\colpa,\newrowpa)--(\newcolpa,\newrowpa);

\global\let\rowpa\newrowpa
\global\let\colpa\newcolpa
}

\draw (0,0)--(\col+1,\row);

\end{tikzpicture}
\caption{Example of a $(5,8)$ rational path}
\label{fig:examplerationalpath}
\end{figure}

Every lattice path can be encoded with a string of the letters $N$ and~$E$ representing the 
north steps and east steps respectively, where the reading order of the path starts at the origin 
$(0,0)$. For example the path~$D$ of Figure~\ref{fig:examplerationalpath} can be represented as 
$D=NNEENNEEENEEE$. If the lattice path is a rational Dyck path then it must have exactly~$a$ 
north steps and it must start with a north step and finish with an east step, so it is of the form 
$D=NE^{i_1}NE^{i_2}\cdots NE^{i_a}E$ where $E^{i}$ indicates a string of $i$ occurrences of the 
letter $E$ and then it can be uniquely associated with the weak composition 
$\mu(D):=(i_1,i_2,\dots,i_a)$. So we can think about $(a,b)$-Dyck paths as weak compositions of 
$b-1$ of length~$a$. For the example in Figure~\ref{fig:examplerationalpath} we have that 
$\mu(D)=(0,2,0,3,2)$ (note that the last~$E$ is not counted).  There is one special rational Dyck 
path, the \emph{identity path}~$\IdentityPath=E$, 
with exactly one east step and without any north steps. By convention, this is considered as the 
unique $(0,1)$-Dyck path. We also have that $\mu(\IdentityPath)=\emptyset$ (the empty weak 
composition). It is not hard to see that 
in dominance order there is a maximal $(a,b)$-Dyck path $\mathcal{D}$ that is defined by the 
boundary of 
the forbidden ribbon shape. This path has $\mu(\mathcal{D})=s-\mathbf{1}:=(s(1)-1,s(2)-1,\dots, 
s(a)-1)$. An equivalent definition of a rational Dyck path is a path $D$ such that $\mu(D)\le_{dom} 
\mu(\mathcal{D})=s-\mathbf{1}$. 

There is really nothing special about the rational ribbon shape 
apart from the fact that it is defined using the diagonal of a $b\times a$ rectangle. The 
information of the numbers $a$ and $b$ can be obtained from the degree sequence $s$. Indeed, 
$a=\ell(s)$ and $b=|s|-\ell(s)+1$. 

\begin{definition}[$s$-Dyck paths]
\label{def_sDyckPath}
For any $s \in \comp$, that we will call a \emph{signature}, we 
define an \emph{$s$-Dyck path} $D$ to 
be a lattice path starting at $(0,0)$ and ending at $(|s|-\ell(s)+1,\ell(s))$ such that~$\mu(D)\le_{dom} 
s-\mathbf{1}$. We denote by $\DP_{s}$ the set of $s$-Dyck paths, and refer to an 
$s$-Dyck path as a Dyck path when $s$ is clear from the context.
\end{definition}
  
An example of a $(3,4,4,2,5)$-Dyck path $D$ with 
$\mu(D)=(0,2,6,0,5)$ is illustrated in Figure~\ref{fig:examplespath}. We will say that a signature $s$ is \emph{rational} when $s$ is obtained as 
in equation \ref{equation:definitionofrationalsignature} for a pair $(a,b)$ of relatively prime 
numbers.

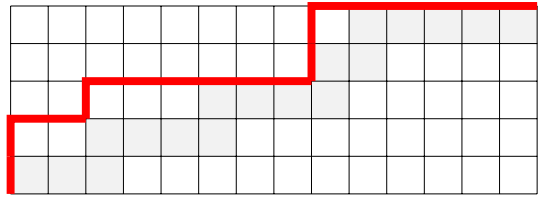
\begin{figure}
\centering
\begin{tikzpicture}[scale=0.5]
\edef \signature{3,4,4,2,5}

\def\row{0}
\def\col{0}

\foreach \part in \signature{
\pgfmathsetmacro \newcol {\col+\part-1}
\pgfmathparse{\col+\part-1}
\global\let\newcol\pgfmathresult
\pgfmathparse{\row+1}
\global\let\newrow\pgfmathresult
\draw[fill, color=gray!10] (\col,\row) rectangle (\newcol+1,\newrow);
\global\let\row\newrow
\global\let\col\newcol
}
\draw[very thin] (0, 0) grid (\col+1, \row);

\def\rowpa{0}
\def\colpa{0}
\edef \pa{0,2,6,0,6}
\foreach \var in \pa{
\pgfmathparse{\colpa+\var}
\global\let\newcolpa\pgfmathresult
\pgfmathparse{\rowpa+1}
\global\let\newrowpa\pgfmathresult
\draw [line width=3, color=red]
(\colpa, \rowpa)--(\colpa,\newrowpa)--(\newcolpa,\newrowpa);

\global\let\rowpa\newrowpa
\global\let\colpa\newcolpa
}


\end{tikzpicture}
\caption{Example of a $(3,4,4,2,5)$-Dyck path}
\label{fig:examplespath}
\end{figure}

We define the \emph{area vector} of an $s$-Dyck path $D$ to 
be the weak composition $\Area(D):=(0)\oplus(s-\mathbf{1})\setminus_{\dom}\mu(D)$. 
Its entries are the number of boxes, counted by rows, between $D$ and the ribbon shape determined 
by 
$s$. The \emph{area} of $D$ is $\area(D):=|\Area(D)|$. In the example of Figure 
\ref{fig:examplespath} $\Area(D)=(0,2,3,0,1)$ and $\area(D)=5$. 

\subsubsection{Bijection between $s$-trees and $s$-Dyck paths}
\label{sec:trees-DyckPaths}

Let $T$ be an $s$-tree. Label the internal nodes of $T$ with $N$'s and the leaves with $E$'s. The 
lattice path $\onetotwo(T)$ associated to $T$ is obtained by reading the sequence of $N$'s and $E$'s on 
the nodes of $T$ in preorder. The function $\onetotwo$ maps the identity tree~$\bullet$ to the identity 
path $E$.
An example of this bijection is illustrated in Figure~\ref{fig:treenortheast}. 

\begin{figure}[htb]
\centering
 \begin{tikzpicture}[]
\begin{scope}[thick,scale=0.45]
\tikzstyle{every node}=[circle, draw,
                        inner sep=0.5pt, minimum width=4pt,font=\small,scale=0.9]
\node (a1) at (2,6)[pin={[color=red,pin distance=3pt]135:$1$}]{N};
\node (b1) at (-1,4)[pin={[color=red,pin distance=3pt]135:$2$}]{N};
\node (b2) at (2,4)[pin={[color=red,pin distance=3pt]135:$11$}]{E};
\node (b3) at (5,4)[pin={[color=red,pin distance=3pt]45:$12$}]{N};

\node (c1) at (-2.5,2) [pin={[color=red,pin distance=3pt]225:$3$}]{E};
\node (c2)  at (-1.5,2)[pin={[color=red,pin distance=3pt]225:$4$}]{E};
\node (c3)  at (-0.5,2)[pin={[color=red,pin distance=3pt]225:$5$}]{N};
\node (c4) at (0.5,2)[pin={[color=red,pin distance=3pt]45:$10$}]{E};
\node (d1)  at  (4.5,2)[pin={[color=red,pin distance=3pt]135:$13$}]{N};
\node (d2) at (5.5,2) [pin={[color=red,pin distance=3pt]45:$19$}] {E};

\node (f1) at (-2,0)[pin={[color=red,pin distance=3pt]225:$6$}]{E};
\node (f2)  at (-1,0)[pin={[color=red,pin distance=3pt]225:$7$}]{E};
\node (f3)  at (0,0)[pin={[color=red,pin distance=3pt]225:$8$}]{E};
\node (f4) at (1,0)[pin={[color=red,pin distance=3pt]230:$9$}]{E};
    
\node (e1) at (2.7,0)[pin={[color=red,pin distance=3pt]310:$14$}]{E};
\node (e2)  at (3.5,0)[pin={[color=red,pin distance=3pt]315:$15$}]{E};
\node (e3)  at (4.5,0)[pin={[color=red,pin distance=3pt]315:$16$}]{E};
\node (e4) at (5.5,0)[pin={[color=red,pin distance=3pt]315:$17$}]{E};
\node (e5) at (6.5,0)[pin={[color=red,pin distance=3pt]315:$18$}]{E};

\draw (a1)--(b1);
\draw (a1)--(b2);
\draw	 (a1)--(b3);
\draw (b1)--(c1);
\draw (b1)--(c2);
\draw (b1)--(c3);
\draw (b1)--(c4);
\draw (b3)--(d1);
\draw (b3)--(d2);
\draw (c3)--(f1);
\draw (c3)--(f2);
\draw (c3)--(f3);
\draw (c3)--(f4);
\draw (d1)--(e1);
\draw (d1)--(e2);
\draw (d1)--(e3);
\draw (d1)--(e4);
\draw (d1)--(e5);
\end{scope}
\begin{scope}[xshift=5cm,scale=0.35]
\edef \signature{3,4,4,2,5}

\def\row{0}
\def\col{0}

\foreach \part in \signature{
\pgfmathsetmacro \newcol {\col+\part-1}
\pgfmathparse{\col+\part-1}
\global\let\newcol\pgfmathresult
\pgfmathparse{\row+1}
\global\let\newrow\pgfmathresult
\draw[fill, color=gray!10] (\col,\row) rectangle (\newcol+1,\newrow);
\global\let\row\newrow
\global\let\col\newcol
}
\draw[very thin] (0, 0) grid (\col+1, \row);

\def\rowpa{0}
\def\colpa{0}
\edef \pa{0,2,6,0,6}
\foreach \var in \pa{
\pgfmathparse{\colpa+\var}
\global\let\newcolpa\pgfmathresult
\pgfmathparse{\rowpa+1}
\global\let\newrowpa\pgfmathresult
\draw [line width=3, color=red]
(\colpa, \rowpa)--(\colpa,\newrowpa)--(\newcolpa,\newrowpa);

\global\let\rowpa\newrowpa
\global\let\colpa\newcolpa
}

\node[color=blue]  at (7,-1) {\small NNEENEEEEEENNEEEEEE};

\end{scope}

\draw[thick, ->](3.5,1)--node [above] {$\xi$}(4.5,1);
\draw[thick, <-](3.5,0.8)--node [below] {$\zeta$}(4.5,0.8);

\end{tikzpicture}
\caption{Example of the bijection between $s$-trees and $s$-Dyck paths.}
\label{fig:treenortheast}
\end{figure}
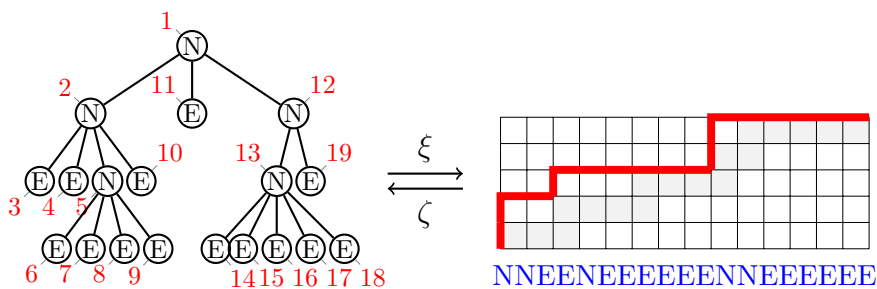

\begin{figure}[htb]
\centering
 \begin{tikzpicture}[]
\begin{scope}[thick,scale=0.45]
\tikzstyle{every node}=[circle, draw,
                        inner sep=0.5pt, minimum width=4pt,font=\small,scale=0.9]
\node (a1) at (2,6)[pin={[color=red,pin distance=3pt]135:$0$}]{};
\node (b1) at (-1,4)[pin={[color=red,pin distance=3pt]135:$2$}]{};
\node (b2) at (2,4)[pin={[color=red,pin distance=3pt]135:$1$}]{};
\node (b3) at (5,4)[pin={[color=red,pin distance=3pt]45:$0$}]{};

\node (c1) at (-2.5,2) [pin={[color=red,pin distance=3pt]225:$5$}]{};
\node (c2)  at (-1.5,2)[pin={[color=red,pin distance=3pt]225:$4$}]{};
\node (c3)  at (-0.5,2)[pin={[color=red,pin distance=3pt]225:$3$}]{};
\node (c4) at (0.5,2)[pin={[color=red,pin distance=3pt]45:$2$}]{};
\node (d1)  at  (4.5,2)[pin={[color=red,pin distance=3pt]135:$1$}]{};
\node (d2) at (5.5,2) [pin={[color=red,pin distance=3pt]45:$0$}] {};

\node (f1) at (-2,0)[pin={[color=red,pin distance=3pt]225:$6$}]{};
\node (f2)  at (-1,0)[pin={[color=red,pin distance=3pt]225:$5$}]{};
\node (f3)  at (0,0)[pin={[color=red,pin distance=3pt]225:$4$}]{};
\node (f4) at (1,0)[pin={[color=red,pin distance=3pt]230:$3$}]{};
    
\node (e1) at (2.7,0)[pin={[color=red,pin distance=3pt]310:$5$}]{};
\node (e2)  at (3.5,0)[pin={[color=red,pin distance=3pt]315:$4$}]{};
\node (e3)  at (4.5,0)[pin={[color=red,pin distance=3pt]315:$3$}]{};
\node (e4) at (5.5,0)[pin={[color=red,pin distance=3pt]315:$2$}]{};
\node (e5) at (6.5,0)[pin={[color=red,pin distance=3pt]315:$1$}]{};

\draw (a1)--(b1);
\draw (a1)--(b2);
\draw	 (a1)--(b3);
\draw (b1)--(c1);
\draw (b1)--(c2);
\draw (b1)--(c3);
\draw (b1)--(c4);
\draw (b3)--(d1);
\draw (b3)--(d2);
\draw (c3)--(f1);
\draw (c3)--(f2);
\draw (c3)--(f3);
\draw (c3)--(f4);
\draw (d1)--(e1);
\draw (d1)--(e2);
\draw (d1)--(e3);
\draw (d1)--(e4);
\draw (d1)--(e5);
\end{scope}

\end{tikzpicture}
\caption{The area labeling of a tree $T$. The area vector of $D=\onetotwo(T)$ is the sequence of
labels of the internal nodes listed preorder.}
\label{fig:area_from_tree}
\end{figure}
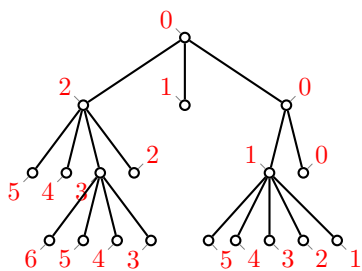

We define the \emph{area labeling} of a tree as the labeling of the nodes such that the root is 
labeled by 0 and the $i$th child of an internal node $x$, from right to left, is labeled by $i-1$ 
plus the label of~$x$. This is illustrated for our example in 
Figure~\ref{fig:area_from_tree}.
It turns out that the area vector of $D=\onetotwo(T)$ can be read directly from~$T$.

\begin{lemma}\label{lem:area_from_tree}
The entries of the area vector of the Dyck path $D=\onetotwo(T)$ are the labels in 
the area labeling of $T$ of the internal nodes listed in preorder.  
\end{lemma}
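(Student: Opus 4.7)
The plan is to prove the identity by induction on $i$, using $L(v)$ to denote the area label of a node $v$. The base case $i = 1$ is immediate, since $v_1$ is the root with $L(v_1) = 0$, which matches $\Area(D)(1) = 0$. From the definition of the area vector one has $\Area(D)(i+1) - \Area(D)(i) = s(i) - 1 - \mu(D)(i)$, so the inductive step reduces to establishing the recurrence $L(v_{i+1}) - L(v_i) = s(i) - 1 - \mu(D)(i)$, which I would verify by splitting on whether $v_{i+1}$ is a descendant of $v_i$ in $T$.

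If $v_{i+1}$ is a descendant of $v_i$, then since preorder traverses $v_i$'s subtree contiguously before leaving it, $v_{i+1}$ must be the leftmost internal child, say $v_{i+1} = c_{k+1}$ with $c_1, \ldots, c_k$ the leaf children of $v_i$ preceding it; in particular $\mu(D)(i) = k$. Since $c_{k+1}$ is the $(s(i)-k)$-th child of $v_i$ counted from the right, the area labeling rule gives $L(v_{i+1}) = L(v_i) + s(i) - k - 1$, matching the desired recurrence. In the complementary case, every child of $v_i$ is a leaf, and $v_{i+1}$ is located by ascending the chain of ancestors $u_0 = v_i, u_1, \ldots, u_d = \mathrm{root}$ until the first level $l^*$ at which some right sibling of $u_{l^* - 1}$ is internal; let $R_l$ denote the number of right siblings of $u_{l-1}$ among the children of $u_l$, and let $v_{i+1}$ appear at position $q$ from the left among those right siblings. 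A direct count of the leaves swept out in preorder between $v_i$ and $v_{i+1}$ yields $\mu(D)(i) = s(i) + \sum_{l=1}^{l^*-1} R_l + q - 1$, while iterating the rule that $L$ of a node equals $L$ of its parent plus the number of its right siblings gives $L(v_i) = \sum_{l=1}^d R_l$ and $L(v_{i+1}) = \sum_{l=l^*}^d R_l - q$. Subtracting and simplifying yields $L(v_{i+1}) - L(v_i) = -\sum_{l=1}^{l^*-1} R_l - q = s(i) - 1 - \mu(D)(i)$, completing the induction.

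The main obstacle will be the bookkeeping in the ascent case, where one has to track simultaneously the leaves absorbed by the preorder traversal as it climbs through successive levels and the telescoping changes in the area label along the ancestor chain. Once the indices $l^*$, $q$ and the level quantities $R_l$ are introduced, both $\mu(D)(i)$ and $L(v_{i+1}) - L(v_i)$ reduce to manifestly equal expressions, so the remaining algebraic verification is routine. The descent case, by contrast, follows immediately from a single application of the area labeling rule and serves as a useful sanity check on the form of the recurrence.
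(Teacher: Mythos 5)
Your proof is correct, and it establishes the identity by verifying the ``other'' recurrence from the one the paper uses. The paper fixes a parent--child pair: if $y$ is the $i$th child of $x$ from right to left, it computes the difference of the corresponding area-vector entries directly from the path, namely $a(y)=a(x)+\sum_{k=0}^{j}(\deg(u_k)-1)-E(x,y)$ where $u_0=x,u_1,\dots,u_j$ are the internal nodes between $x$ and $y$ in preorder, and the leaf count $E(x,y)=\sum_{k=0}^{j}\deg(u_k)-j-i$ collapses this to $a(y)=a(x)+i-1$ --- precisely the defining recurrence of the area labeling, which together with the common root value $0$ forces the two labelings to agree. You instead fix a preorder-consecutive pair $v_i,v_{i+1}$ and verify the path recurrence $L(v_{i+1})-L(v_i)=s(i)-1-\mu(D)(i)$ by propagating the labeling rule along the tree path joining them; this is what forces your descent/ascent dichotomy and the ancestor-chain bookkeeping with $l^*$, $q$ and the $R_l$, all of which checks out (in the ascent case both sides reduce to $-\sum_{l<l^*}R_l-q$, and the existence of $l^*$ is guaranteed since $i<a$). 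The paper's choice of recurrence avoids your case split entirely, because a parent--child pair is a single uniform configuration whereas the preorder successor of $v_i$ may sit arbitrarily far up its ancestor chain; your choice buys an explicit description of where that successor lives in the tree and makes the induction run literally along the Dyck path, at the price of heavier bookkeeping. The two arguments are essentially transposes of one another, and both are one-line telescopings once the relevant leaf count is in place.
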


\begin{proof}
The root of the tree corresponds to the first north step of the path, which contributes a zero 
entry 
in the area vector. Let $x$ and $y$ be two internal nodes of $T$ such that $y$ is the $i$th child 
from right to left of $x$. Let $a(x)$ and $a(y)$ be the entries of the area vector of $D$ of the 
corresponding north steps in $D$.
We need to show that $a(y)=a(x)+i-1$. Let $u_1,\dots, u_j$ be the internal nodes strictly between 
$x$ and $y$ in $T$ in preorder, and denote $u_0=x$. Then, 
$$a(y)= a(x) + \sum_{k=0}^j (\deg(u_k)-1) - E(x,y),$$
where the sum of the degrees minus one is the increase of area determined by the ribbon shape and 
$E(x,y)$ denotes the number of east steps (or leafs) between $x$ and $y$. Since 
$E(x,y)=\sum_{k=0}^j \deg(u_k) - j - i$, then $a(y)=a(x)-(j+1)+j+i= a(x)+i-1$ as desired. 
\end{proof}

\begin{lemma}
The path $\onetotwo(T)$ is an $s$-Dyck path.
\end{lemma}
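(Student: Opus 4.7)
My plan is to verify the two defining conditions of an $s$-Dyck path for $D = \onetotwo(T)$: the correct endpoint, and the dominance inequality $\mu(D) \le_{\dom} s - \mathbf{1}$. The first is a node count; the second will follow from Lemma~\ref{lem:area_from_tree} together with the obvious fact that area labels are non-negative.

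For the endpoint, the internal nodes $v_1,\ldots,v_{\ell(s)}$ of $T$ have degrees summing to $|s|$, so $T$ has $|s|$ edges and $|s|+1$ vertices, hence $|s|-\ell(s)+1$ leaves. The preorder labelling thus produces $\ell(s)$ north steps and $|s|-\ell(s)+1$ east steps, yielding the required endpoint $(|s|-\ell(s)+1,\ell(s))$. Since the root is internal (as $\ell(s)\ge 1$) and the last node visited in preorder is the rightmost leaf of $T$, the path $D$ begins with $N$ and ends with $E$, so the composition $\mu(D)=(i_1,\ldots,i_{\ell(s)})$ is well defined.

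For the dominance condition, I need to show that $\sum_{j=1}^{i}\mu(D)(j) \le \sum_{j=1}^{i}(s(j)-1)$ for every $i=1,\ldots,\ell(s)$. The case $i=\ell(s)$ is an equality, since the endpoint count gives $\sum_j \mu(D)(j) = (|s|-\ell(s)+1)-1 = \sum_j(s(j)-1)$. For $i<\ell(s)$, the claimed inequality is exactly the non-negativity of the $(i+1)$-st entry of the formal weak composition $(0)\oplus(s-\mathbf{1})\setminus_{\dom}\mu(D)$, which by Lemma~\ref{lem:area_from_tree} equals the label of $v_{i+1}$ in the area labeling of $T$. But the area labeling is non-negative by construction: the root has label $0$, and the $j$-th child from the right of an internal node $x$ has label $(j-1)+a(x)\ge a(x)$, so labels are non-decreasing along any root-to-node path. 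Hence every area label is non-negative and the dominance condition holds.

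There is no serious obstacle here; the only subtlety is that Lemma~\ref{lem:area_from_tree} must be read as identifying the purely formal expression $(0)\oplus(s-\mathbf{1})\setminus_{\dom}\mu(D) \in \ZZ^{\ell(s)}$ (which is well defined for any lattice path with the correct endpoint) with the sequence of area labels of $v_1,\ldots,v_{\ell(s)}$. This identity is precisely what lets us transfer the evident non-negativity on the tree side to the dominance inequality on the path side, establishing that $\onetotwo(T)$ is indeed an $s$-Dyck path.
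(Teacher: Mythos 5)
Your proof is correct and follows essentially the same route as the paper: the paper's argument is exactly that the area labeling of $T$ is nonnegative by construction and that Lemma~\ref{lem:area_from_tree} transfers this to the area vector of $\onetotwo(T)$, hence to the dominance condition. You simply make explicit the endpoint count and the formal reading of the area vector that the paper leaves implicit.
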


\begin{proof}
By definition, all the entries of the area labeling of the tree $T$ are nonnegative. 
Lemma~\ref{lem:area_from_tree} then implies that the area vector of $\onetotwo(T)$ has nonnegative 
entries 
and therefore that it is an $s$-Dyck path. 
\end{proof}

We will see below that the map $\onetotwo$ is actually a bijection between $s$-trees and $s$-Dyck paths. 
The inverse of this map is described below. Let $D$ be an $s$-Dyck path and~$\mu=\mu(D)$. Define 
the 
tree $\zeta(D)$ associated to $D$ recursively as follows.
\begin{enumerate}
\item First, start with a root $r=v_1$ with $s(1)$ children.
\item Attach an internal node $v_2$ with $s(2)$ children at the $(\mu(1)+1)$th leaf, ordered from 
left to right.    
\item In general, at the $i$th step of the process we have a tree $\tau_i$ with internal nodes 
$v_1,\dots,v_i$ in preorder. The $(i+1)$th tree $\tau_{i+1}$ is obtained from $\tau_i$ by attaching 
an internal node $v_{i+1}$ with $s(i+1)$ children at the $(\mu(i)+1)$th leaf that appear after 
$v_i\in \tau_i$ in preorder.
\item The process finishes after attaching the $a$ internal nodes $v_1,\dots ,v_a$, where 
$a=\ell(s)$.
\end{enumerate}

\begin{figure}[htb]
\centering
 \begin{tikzpicture}[]
\begin{scope}[thick,scale=0.35]
\tikzstyle{every node}=[circle, draw,
                        inner sep=0.5pt, minimum width=4pt,font=\small,scale=0.9]
\node (a1) at (2,6)[pin={[color=red,pin distance=3pt]135:$v_1$}]{};

\node (b1) at (-1,4){};
\node (b2) at (2,4){};
\node (b3) at (5,4){};



    

\draw (a1)--(b1);
\draw (a1)--(b2);
\draw	 (a1)--(b3);
\end{scope}


\begin{scope}[xshift=3.2cm,thick,scale=0.35]
\tikzstyle{every node}=[circle, draw,
                        inner sep=0.5pt, minimum width=4pt,font=\small,scale=0.9]
\node (a1) at (2,6)[pin={[color=red,pin distance=3pt]135:$v_1$}]{};

\node (b1) at (-1,4)[pin={[color=red,pin distance=3pt]135:$v_2$}]{};
\node (b2) at (2,4){};
\node (b3) at (5,4){};

\node (c1) at (-2.5,2) {};
\node (c2)  at (-1.5,2){};
\node (c3)  at (-0.5,2){};
\node (c4) at (0.5,2){};


    

\draw (a1)--(b1);
\draw (a1)--(b2);
\draw	 (a1)--(b3);
\draw (b1)--(c1);
\draw (b1)--(c2);
\draw (b1)--(c3);
\draw (b1)--(c4);
\end{scope}


\begin{scope}[xshift=6.5cm,thick,scale=0.35]
\tikzstyle{every node}=[circle, draw,
                        inner sep=0.5pt, minimum width=4pt,font=\small,scale=0.9]
\node (a1) at (2,6)[pin={[color=red,pin distance=3pt]135:$v_1$}]{};

\node (b1) at (-1,4)[pin={[color=red,pin distance=3pt]135:$v_2$}]{};
\node (b2) at (2,4){};
\node (b3) at (5,4){};

\node (c1) at (-2.5,2) {};
\node (c2)  at (-1.5,2){};
\node (c3)  at (-0.5,2)[pin={[color=red,pin distance=5pt]45:$v_3$}]{};
\node (c4) at (0.5,2){};


\node (f1) at (-2,0){};
\node (f2)  at (-1,0){};
\node (f3)  at (0,0){};
\node (f4) at (1,0){};
    

\draw (a1)--(b1);
\draw (a1)--(b2);
\draw	 (a1)--(b3);
\draw (b1)--(c1);
\draw (b1)--(c2);
\draw (b1)--(c3);
\draw (b1)--(c4);
\draw (c3)--(f1);
\draw (c3)--(f2);
\draw (c3)--(f3);
\draw (c3)--(f4);
\end{scope}


\begin{scope}[xshift=9.8cm, thick,scale=0.35]
\tikzstyle{every node}=[circle, draw,
                        inner sep=0.5pt, minimum width=4pt,font=\small,scale=0.9]
\node (a1) at (2,6)[pin={[color=red,pin distance=3pt]135:$v_1$}]{};

\node (b1) at (-1,4)[pin={[color=red,pin distance=3pt]135:$v_2$}]{};
\node (b2) at (2,4){};
\node (b3) at (5,4)[pin={[color=red,pin distance=3pt]45:$v_4$}]{};

\node (c1) at (-2.5,2) {};
\node (c2)  at (-1.5,2){};
\node (c3)  at (-0.5,2)[pin={[color=red,pin distance=5pt]45:$v_3$}]{};
\node (c4) at (0.5,2){};

\node (d1)  at  (4.5,2){};
\node (d2) at (5.5,2) {};

\node (f1) at (-2,0){};
\node (f2)  at (-1,0){};
\node (f3)  at (0,0){};
\node (f4) at (1,0){};
    

\draw (a1)--(b1);
\draw (a1)--(b2);
\draw	 (a1)--(b3);
\draw (b1)--(c1);
\draw (b1)--(c2);
\draw (b1)--(c3);
\draw (b1)--(c4);
\draw (b3)--(d1);
\draw (b3)--(d2);
\draw (c3)--(f1);
\draw (c3)--(f2);
\draw (c3)--(f3);
\draw (c3)--(f4);
\end{scope}


\begin{scope}[xshift=13.4cm,thick,scale=0.35]
\tikzstyle{every node}=[circle, draw,
                        inner sep=0.5pt, minimum width=4pt,font=\small,scale=0.9]
\node (a1) at (2,6)[pin={[color=red,pin distance=3pt]135:$v_1$}]{};

\node (b1) at (-1,4)[pin={[color=red,pin distance=3pt]135:$v_2$}]{};
\node (b2) at (2,4){};
\node (b3) at (5,4)[pin={[color=red,pin distance=3pt]45:$v_4$}]{};

\node (c1) at (-2.5,2){};
\node (c2)  at (-1.5,2){};
\node (c3)  at (-0.5,2)[pin={[color=red,pin distance=5pt]45:$v_3$}]{};
\node (c4) at (0.5,2){};

\node (d1)  at  (4.5,2)[pin={[color=red,pin distance=3pt]135:$v_5$}]{};
\node (d2) at (5.5,2) {};

\node (f1) at (-2,0){};
\node (f2)  at (-1,0){};
\node (f3)  at (0,0){};
\node (f4) at (1,0){};
    
\node (e1) at (2.7,0){};
\node (e2)  at (3.5,0){};
\node (e3)  at (4.5,0){};
\node (e4) at (5.5,0){};
\node (e5) at (6.5,0){};

\draw (a1)--(b1);
\draw (a1)--(b2);
\draw	 (a1)--(b3);
\draw (b1)--(c1);
\draw (b1)--(c2);
\draw (b1)--(c3);
\draw (b1)--(c4);
\draw (b3)--(d1);
\draw (b3)--(d2);
\draw (c3)--(f1);
\draw (c3)--(f2);
\draw (c3)--(f3);
\draw (c3)--(f4);
\draw (d1)--(e1);
\draw (d1)--(e2);
\draw (d1)--(e3);
\draw (d1)--(e4);
\draw (d1)--(e5);

\end{scope}


\end{tikzpicture}
\caption{The recursive construction of the tree $\zeta(D)$ for the Dyck path with signature 
$s(D)=(3,4,4,2,5)$ and composition $\mu(D)=(0,2,6,0,6)$.}
\label{fig:map_path_to_tree}
\end{figure}
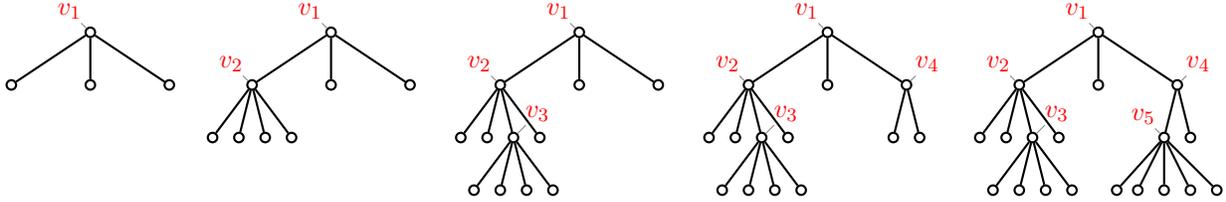

\begin{lemma}
Let $D$ be an $s$-Dyck path. The map $\zeta$ is well defined and $\zeta(D)$ is an $s$-tree.
\end{lemma}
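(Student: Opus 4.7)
The plan is to track, at each step of the recursive construction, a single integer quantity $L_i$ defined as the number of leaves of $\tau_i$ that appear after $v_i$ in preorder. The entire argument reduces to computing $L_i$ and comparing it against the dominance condition $\mu(D)\le_{\dom} s-\mathbf{1}$ built into the definition of $\DP_s$.

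First I would establish a recurrence for $L_i$. Clearly $L_1=s(1)$, since all $s(1)$ children of the root are leaves lying after it. At step $i+1$ the $(\mu(i)+1)$th leaf after $v_i$ in $\tau_i$ is replaced by an internal node $v_{i+1}$ with $s(i+1)$ fresh children: of the $L_i$ leaves after $v_i$, the first $\mu(i)$ end up before $v_{i+1}$ in preorder, one becomes $v_{i+1}$ itself, the $s(i+1)$ new children appear immediately after $v_{i+1}$, and the remaining $L_i-\mu(i)-1$ original leaves still appear after. This gives
\[
L_{i+1}=s(i+1)+L_i-\mu(i)-1,
\]
and telescoping yields $L_i=\sum_{j=1}^{i}(s(j)-1)+1-\sum_{j=1}^{i-1}\mu(j)$.

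With this formula in hand, well-definedness becomes transparent: step $i+1$ requires $L_i\ge \mu(i)+1$, which is equivalent to $\sum_{j=1}^{i}\mu(j)\le \sum_{j=1}^{i}(s(j)-1)$. This is precisely the dominance inequality supplied by the hypothesis $D\in\DP_s$, so the construction proceeds through all $a=\ell(s)$ steps without ever running out of leaves.

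Finally, to verify $\signat(\zeta(D))=s$, I would show by induction on $i$ that the internal nodes of $\tau_i$, read in preorder, are exactly $v_1,\dots,v_i$. The base case is immediate, and for the inductive step $v_{i+1}$ is grafted at a leaf occurring after $v_i$; since by the inductive hypothesis $v_i$ is the last internal node of $\tau_i$, no internal node can sit between $v_i$ and the chosen leaf, so the preorder list of internal nodes of $\tau_{i+1}$ is $v_1,\dots,v_i,v_{i+1}$. Applied at $i=a$ this gives $\signat(\zeta(D))=(\deg v_1,\dots,\deg v_a)=(s(1),\dots,s(a))=s$. The main obstacle is really the bookkeeping for $L_i$; once the recurrence is solved, the equivalence with the dominance condition is essentially automatic.
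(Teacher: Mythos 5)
Your proposal is correct and follows essentially the same route as the paper: both arguments reduce to showing that the number of leaves of $\tau_i$ after $v_i$ in preorder equals $\sum_{j=1}^{i}(s(j)-1)+1-\sum_{j=1}^{i-1}\mu(j)$ and then invoking the dominance inequality $\mu(1)+\dots+\mu(i)\le (s(1)-1)+\dots+(s(i)-1)$. The only differences are cosmetic: you derive this count via a recurrence for $L_i$ rather than by a direct tally, and you spell out the preorder induction that the paper compresses into ``by construction.''
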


\begin{proof}
In order to show that the map $\zeta$ is well defined we need to show that for $i<a$ there are at 
least~$\mu(i)+1$ leafs in the tree $\tau_i$ that appear after the internal node $v_i\in \tau_i$ in 
preorder. This guarantees that a new node $v_{i+1}$ can be added at the right place and that the 
tree~$\tau_{i+1}$ in 
the recursive definition can be constructed. For $i=1$ this is clear since $v_1$ has $s(1)\geq 
\mu(1)+1$ children. In general, since~$D$ is a path that lies above the ribbon shape defined by 
$s$, 
for~$i<a$ we have 
\begin{equation}
\label{eq:lemma_proof_pathtotree}
\mu(1)+\dots + \mu(i) \leq (s(1)-1) + \dots +(s(i)-1).
\end{equation}
The total number of leafs in $\tau_i$ is equal to $\sum_{k=1}^i (s(k) -1)+1$, from which 
$\sum_{k=1}^{i-1} \mu(k)$ are before $v_i$ in preorder. Combining these two equations with 
the inequality \eqref{eq:lemma_proof_pathtotree} we obtain that the number of leafs in $\tau_i$ 
after $v_i$ in 
preorder is
\[
\sum_{k=1}^i (s(k) -1)+1 - \sum_{k=1}^{i-1} \mu(k)  \geq \mu(i)+1.
\]
This shows that the map is well defined. The signature of the tree $\zeta(D)$ is by construction 
equal to $s=(s(1),\dots, s(a))$. 
\end{proof}

\begin{theorem}\label{thm:bijection_trees_paths}
The map $\onetotwo:\T_s\rightarrow\DP_s$ is a bijection between $s$-trees and $s$-Dyck paths. The inverse of~$\onetotwo$ is $\zeta$.
\end{theorem}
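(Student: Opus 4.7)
The plan is to verify that $\xi$ and $\zeta$ are mutually inverse by checking both $\zeta\circ\xi=\mathrm{id}_{\T_s}$ and $\xi\circ\zeta=\mathrm{id}_{\DP_s}$. Both equalities ultimately rest on a careful comparison of the preorder traversal of $T$ with the intermediate trees $\tau_1,\tau_2,\dots,\tau_a$ produced by the recursive construction of $\zeta$.

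For $\zeta(\xi(T))=T$, let $T$ be an $s$-tree with internal nodes $v_1,\dots,v_a$ in preorder, and set $D=\xi(T)$ with $\mu(D)=(\mu_1,\dots,\mu_a)$. From the definition of $\xi$, $\mu_i$ counts exactly the number of leaves of $T$ lying between $v_i$ and $v_{i+1}$ in preorder. For each $i$ define $\tau_i^{(T)}$ to be the tree obtained from $T$ by keeping $v_1,\dots,v_i$ as internal nodes with their children in $T$ and treating every $v_k$ with $k>i$ that appears as a child of some $v_j$, $j\leq i$, as a leaf. I would prove by induction on $i$ that the intermediate tree $\tau_i$ built by $\zeta(D)$ equals $\tau_i^{(T)}$. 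The base case $i=1$ is immediate, since both are the root with $s(1)$ leaf-children. For the inductive step it suffices to identify $v_{i+1}$, viewed as a leaf of $\tau_i^{(T)}$, with the $(\mu_i+1)$-th leaf of $\tau_i^{(T)}$ appearing after $v_i$ in preorder. This in turn follows from the fact that the preorder of $\tau_i^{(T)}$ restricted to $\{v_1,\dots,v_i\}$ coincides with that of $T$ restricted to the same set, and that by definition no internal node of $T$ lies strictly between $v_i$ and $v_{i+1}$ in $T$'s preorder, so the leaves of $\tau_i^{(T)}$ between them are precisely the $\mu_i$ leaves of $T$ in that range.

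For $\xi(\zeta(D))=D$, let $D$ be an $s$-Dyck path with $\mu(D)=(\mu_1,\dots,\mu_a)$ and set $T=\zeta(D)$. By construction, immediately after $v_{i+1}$ is attached, the tree $\tau_{i+1}$ contains exactly $\mu_i$ leaves between $v_i$ and $v_{i+1}$ in preorder. The key observation is that every subsequent attachment $v_j$ with $j>i+1$ occurs at a leaf strictly after $v_{i+1}$ in preorder, since each $v_j$ is attached after $v_{j-1}$ and, by transitivity, after $v_{i+1}$; grafting an internal node at such a leaf does not alter the preorder positions of $v_i$, $v_{i+1}$, or of the leaves between them. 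Hence in the final tree $T$ there are still $\mu_i$ leaves between $v_i$ and $v_{i+1}$ in preorder for every $i<a$. A short leaf-counting argument, using that $T$ has $|s|+1-a$ leaves in total and that $\sum_{i=1}^a \mu_i=|s|-a$ by the Dyck-path definition, then shows that the number of leaves appearing after $v_a$ equals $\mu_a+1$. Reading $T$ in preorder therefore yields the string $NE^{\mu_1}NE^{\mu_2}\cdots NE^{\mu_a}E=D$.

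The only real subtlety, and the step requiring the most care, is the bookkeeping that preorder positions of already-placed nodes and of the leaves between them are preserved under each later attachment; once this invariant is established, both inverse identities reduce to reading off the definitions.
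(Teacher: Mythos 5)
Your proof is correct and follows essentially the same route as the paper's: both arguments rest on the observation that an $s$-tree is completely determined by the numbers $\mu(i)$ of leaves between consecutive internal nodes in preorder, which is exactly what $\onetotwo$ records and what $\zeta$ reconstructs. Your version merely makes explicit, via the induction on the intermediate trees $\tau_i$ and the invariance of preorder positions under later graftings, the step the paper compresses into ``this condition completely characterizes $T$ and is satisfied by $\zeta(D)$.''
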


\begin{proof}
We will show that $D=\onetotwo(T)$ if and only if $T=\zeta(D)$. This implies that both maps are 
bijections 
and that they are inverses to each other. 

Let $D=\onetotwo(T)$ and~$\mu=\mu(D)$. Denote by $v_1,\dots,v_a$ the internal nodes of $T$ in preorder. 
By 
the definition of $\onetotwo$, there are exactly $\mu(i)$ leafs between $v_i$ and $v_{i+1}$ traveling 
$T$ in preorder. This 
condition completely characterizes $T$ and is satisfied by $\zeta(D)$. Therefore,~$T=\zeta(D)$. 

On the other hand, if $T=\zeta(D)$ then $T$ is the unique tree with internal nodes $v_1,\dots,v_a$ 
in preorder such that there are exactly $\mu(i)$ leafs between $v_i$ and $v_{i+1}$, for $i<a$. 
Therefore, $D=\onetotwo(T)$. 
\end{proof}

\subsection{$312$-avoiding Stirling permutations}

Let $s=(s(1),s(2),\dots, s(a))$ be a composition. An \emph{$s$-permutation} is any 
multiset permutation (or \emph{multipermutation}) of the multiset $\{1^{s(1)},2^{s(2)},\dots, 
a^{s(a)}\}$, where there are $s(i)$ copies of each $i$. Let us denote the set of $s$-permutations 
as $\sym_s$, then we have that

$$|\sym_s|=\binom{|s|}{s}:=\dfrac{|s|!}{s(1)!s(2)!\cdots s(a)!}.$$

For multipermutations $\tau$ and $\sigma$ we say that $\sigma$ contains the 
pattern $\tau$ if there is a subword of $\sigma$ where the elements have the 
same relative order as the elements in $\tau$. For example, the 
multipermutation $\sigma=1132235544$ contains the pattern $\tau=212$ because the elements in the 
subword $323$ of $\sigma$  are in the same relative order than the elements 
in $\tau$. A multipermutation can contain many patterns and multiple or no occurrences 
of a particular pattern. For example $\sigma=1132235544$ contains no occurrence of the pattern 
$321$. We say that $\sigma$ \emph{avoids}~$\tau$ or is  \emph{$\tau$-avoiding} if it does not 
contain any occurrence of the pattern $\tau$. 

A particular example of pattern avoiding multipermutations are the $212$-avoiding 
$s$-permutations introduced by Gessel and Stanley in \cite{GesselStanley1978} when they studied a 
generalization of Euler's formula for Eulerian polynomials. They considered the descent 
enumerator of $212$-avoiding multipermutations of $\{1,1,2,2,\cdots,n,n\}$, i.e., multipermutations 
in which all the numbers between the two occurrences of any fixed number $m$ are larger than $m$. To 
this family, belongs for example the permutation $12234431$ but not the permutation $11322344$ since 
$2$ is less than $3$ and $2$ is between the two occurrences of $3$.
They called this family of multipermutations \emph{Stirling Permutations} and they 
have been studied by many other authors, see for example
\cite{KubaPanholzer2011,JansonKubaPanholzer2011,Park1994-1,Park1994-2,Park1994-3, 
GrahamKnuthPatashnik1994,Dleon2015,RemmelWilson2015}.

The set of $s$-permutations have also already appeared in the work on Hopf algebras of 
Novelli and Thibon~\cite{NovelliThibon2014}. They study generalizations of the Malvenuto-Reutenauer Hopf 
algebra of permutations \cite{MalvenutoReutenauer1995} to multipermutations of $\{1^{m},2^{m},\dots, n^{m}\}$, and of the Loday-Ronco Hopf 
algebra of planar binary trees \cite{LodayRonco1998} to $(m+1)$-ary trees. The authors introduce a notion of 
metasylvester congruence on permutations that allows them to obtain Hopf algebras based on 
decreasing trees. The element representatives of the congruence classes are $121$-avoiding s-permutations for $s=(m,m,\dots,m)$, see~\cite[Proposition~3.10]{NovelliThibon2014}. 
These $121$-avoiding $s$-permutations and associated lattices called metasylvester lattices are 
also studied by Pons in~\cite{Pons2015}, where they are also related to the combinatorics of $m$-Tamari lattices introduced by F. Bergeron.

\begin{definition}[$312$-avoiding Stirling $s$-permutations]
\label{def_sStirlingPermutations}
A Stirling $s$-permutation is a multipermutation of 
$\{1^{s(1)},2^{s(2)},\dots, a^{s(a)}\}$
that avoids the pattern $212$. 
A $312$-avoiding Stirling $s$-permutation is a multipermutation of the same multiset that avoids both patterns $212$ and $312$. We denote by $\SP_s$ the set of Stirling $s$-permutations, and by $\SP_s(312)$ the set of $312$-avoiding Stirling $s$-permutations.
\end{definition}

For instance, 
$1223321$ and $2332211$ are 312-avoiding Stirling $(2,3,2)$-permutations, while $3312221$ is a 
Stirling $(2,3,2)$-permutation that is not $312$-avoiding. See Figure 
\ref{figure:exampleof212312avoiding} for the complete example of $\SP_{(2,3,2)}$ and $\SP_{(2,3,2)}(312)$.

\begin{figure}[htbp]
\centering
\includegraphics[width=0.45\textwidth]{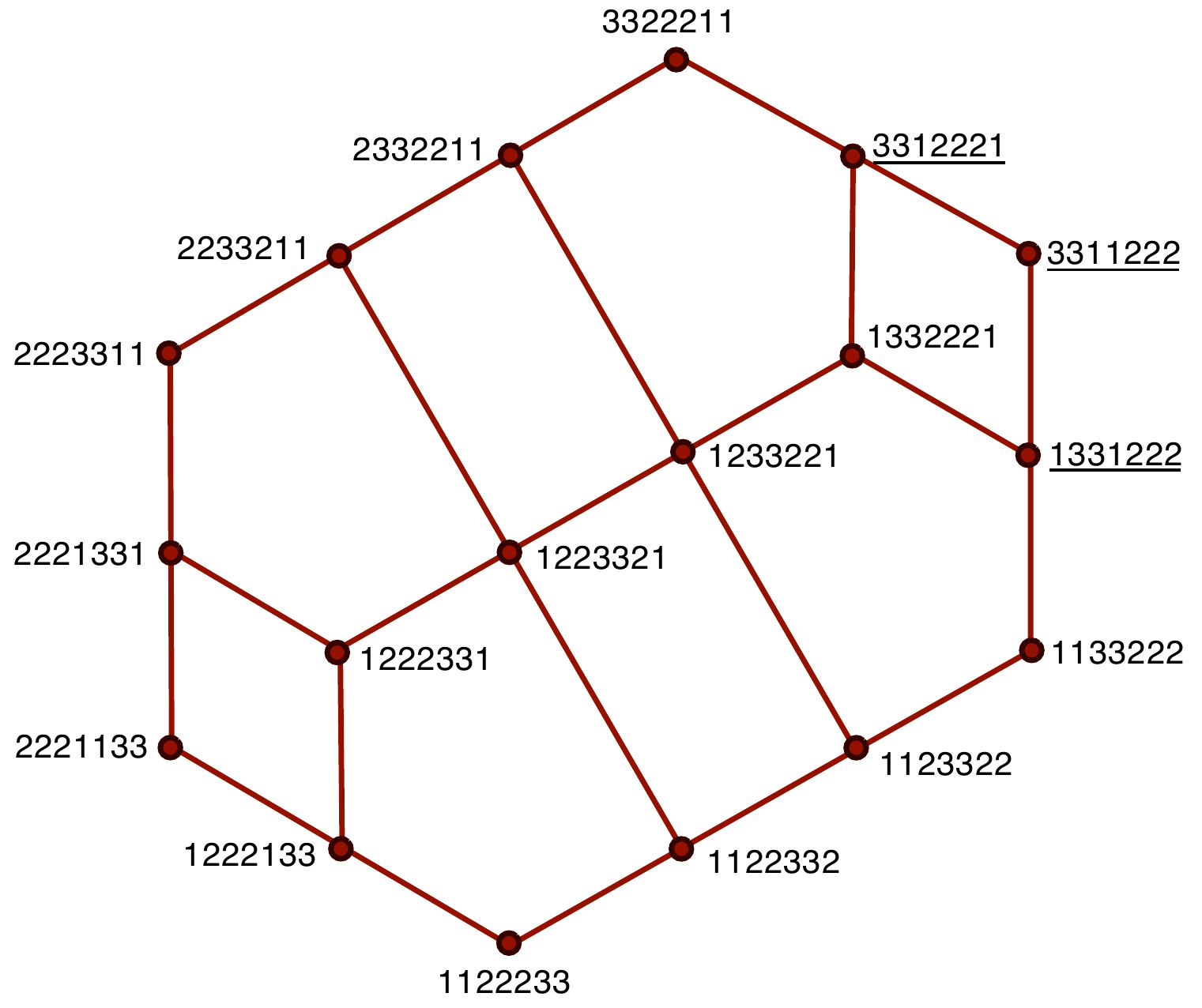}

\caption{All Stirling $s$-permutations for $s=(2,3,2)$ (drawn as the elements of a lattice 
similarly as in~\cite{Pons2015}). The $312$-avoiding are the non-underlined ones. There are 
15 of them, which are in bijection with the 15 $(3,4,3)$-trees.}
\label{figure:exampleof212312avoiding}

\end{figure}

\begin{remark}
Replacing the numbers from $1$ to $a$ for the numbers 
from $a$ to $1$ respectively and reversing the permutation transforms bijectively $(212, 
312)$-avoiding $s$-permutations into $(121, 231)$-avoiding $\overleftarrow s$-permutations,
where $\overleftarrow{s}=(s(a),\dots,s(1))$ denotes the composition obtained by reading $s$ in 
reverse order.
This family of $(121, 231)$-avoiding $\overleftarrow s$-permutations is a generalization of $231$-avoiding permutations which is also an $s$-Catalan family.
\end{remark}

\begin{figure}[htbp]
\centering
\begin{tikzpicture}[thick,scale=0.35]

\begin{scope}[xshift=0]
\tikzstyle{every node}=[circle, draw,
                        inner sep=0.5pt, minimum width=4pt,font=\small,scale=0.9]
\node (a1) at (2,6)[pin={[color=red,pin distance=3pt]135:$v_1$}]{};

\node (b1) at (-1,4){};
\node (b2) at (2,4)[pin={[color=red,pin distance=3pt]180:$v_2$}]{};
\node (b3) at (5,4){};

\node (c1) at (-0.5,2)[pin={[color=red,pin distance=3pt]145:$v_4$}]{};
\node (c2)  at (1,2){};
\node (c3)  at (3,2)[pin={[color=red,pin distance=3pt]0:$v_3$}]{};
\node (c4) at (7,2)[pin={[color=red,pin distance=3pt]45:$v_5$}]{};

\node (d1)  at  (-1,0){};
\node (d2) at (0,0) {};

\node (f1) at (1.5,0){};
\node (f2)  at (2.5,0){};
\node (f3)  at (3.5,0){};
\node (f4) at (4.5,0){};
    
\node (e1) at (5,0){};
\node (e2)  at (6,0){};
\node (e3)  at (7,0){};
\node (e4) at (8,0){};
\node (e5) at (9,0){};

\draw (a1)--(b1);
\draw (a1)--(b2);
\draw	 (a1)--(b3);
\draw (b2)--(c1);
\draw (b2)--(c2);
\draw (b2)--(c3);
\draw (b2)--(c4);
\draw (c1)--(d1);
\draw (c1)--(d2);
\draw (c3)--(f1);
\draw (c3)--(f2);
\draw (c3)--(f3);
\draw (c3)--(f4);
\draw (c4)--(e1);
\draw (c4)--(e2);
\draw (c4)--(e3);
\draw (c4)--(e4);
\draw (c4)--(e5);
\tikzstyle{every node}=[blue, inner sep=0.5pt, minimum width=4pt,font=\small,scale=1]
\node[] at (1,4.5) {$1$};
\node[] at (3,4.5) {$1$};
\node[] at (0.7,2.5) {$2$};
\node[] at (2,2.5) {$2$};
\node[] at (4,2.5) {$2$};
\node[] at (-.5,0.5) {$4$};
\node[] at (2.2,0.5) {$3$};
\node[] at (3,0.5) {$3$};
\node[] at (3.8,0.5) {$3$};
\node[] at (5.9,0.5) {$5$};
\node[] at (6.7,0.5) {$5$};
\node[] at (7.4,0.5) {$5$};
\node[] at (8.1,0.5) {$5$};
\tikzstyle{every node}=[ inner sep=0.5pt, minimum width=4pt,font=\small,scale=1.5]
\node[blue] (permu) at (3,-6) {$1422333255551$};
\draw[->] (3,-2) -- (permu);
\end{scope}

\begin{scope}[xshift=460,yshift=0,thick,scale=1]
\tikzstyle{every node}=[circle, draw,
                        inner sep=0.5pt, minimum width=4pt,font=\small]
\node (a1) at (2,6)[pin={[pin edge={black, dashed, -},pin distance=1pt, red]60:$v_1$}]{};
\node (b1) at (-1,4)[pin={[pin edge={black, dashed, -},pin distance=1pt, red]90:$v_2$}]{};
\node (b2) at (2,4){};
\node (b3) at (5,4)[pin={[pin edge={black, dashed, -},pin distance=1pt, red]60:$v_4$}]{};

\node (c1) at (-2.5,2) {};
\node (c2)  at (-1.5,2){};
\node (c3)  at (-0.5,2)[pin={[pin edge={black, dashed, -},pin distance=3pt, red]200:$v_3$}]{};
\node (c4) at (0.5,2){};
\node (d1)  at  (4.5,2)[pin={[pin edge={black, dashed, -},pin distance=1pt, red]120:$v_5$}]{};
\node (d2) at (5.5,2) {};

\node (f1) at (-2,0){};
\node (f2)  at (-1,0){};
\node (f3)  at (0,0){};
\node (f4) at (1,0){};
    
\node (e1) at (2.5,0){};
\node (e2)  at (3.5,0){};
\node (e3)  at (4.5,0){};
\node (e4) at (5.5,0){};
\node (e5) at (6.5,0){};

\draw (a1)--(b1);
\draw (a1)--(b2);
\draw	 (a1)--(b3);
\draw (b1)--(c1);
\draw (b1)--(c2);
\draw (b1)--(c3);
\draw (b1)--(c4);
\draw (b3)--(d1);
\draw (b3)--(d2);
\draw (c3)--(f1);
\draw (c3)--(f2);
\draw (c3)--(f3);
\draw (c3)--(f4);
\draw (d1)--(e1);
\draw (d1)--(e2);
\draw (d1)--(e3);
\draw (d1)--(e4);
\draw (d1)--(e5);

\tikzstyle{every node}=[circle, blue,scale=1,
                        inner sep=0.5pt, minimum width=4pt,font=\small]

\node at (1.2,4.5) {$1$};
\node at (2.8,4.5) {$1$};
\node at (-1.8,2.3) {$2$};
\node at (-1,2.3) {$2$};
\node at (0,2.3) {$2$};
\node[] at (5,2.3) {$4$};
\node[] at (-1.3,0.3) {$3$};
\node[] at (.3,0.3) {$3$};
\node[] at (-.5,0.3) {$3$};
\node[] at (3.2,0.3) {$5$};
\node[] at (4.1,0.3) {$5$};
\node[] at (5,0.3) {$5$};
\node[] at (6,0.3) {$5$};

\tikzstyle{every node}=[ inner sep=0.5pt, minimum width=4pt,font=\small,scale=1.5]
\node[blue] (permu) at (2,-6) {$2233321155554$};
\draw[->] (2,-2) -- (permu);

\end{scope}

\end{tikzpicture}
\caption{Example of the map $\Sigma:\IT_s\rightarrow \SP_{s-\bf{1}}$}
\label{fig:map_tree_to_permutation}
\end{figure}
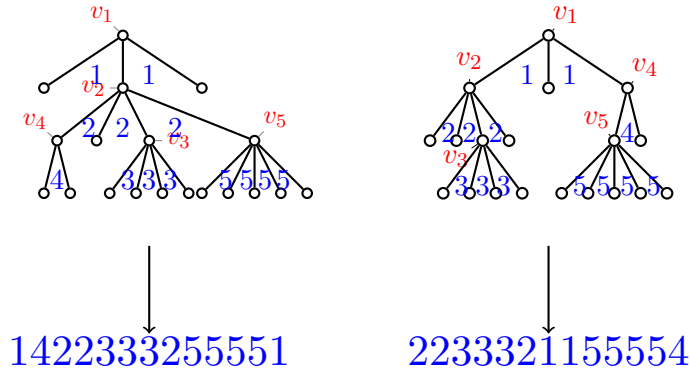

\subsubsection{Bijection between $s$-increasing trees and Stirling $(s-\bf{1})$-permutations}
\label{sec_sincreasing_stirling}
An \emph{increasing tree} is a planar rooted tree whose internal nodes have been decorated with the elements of $[a]$ (where $a$ is the number of internal nodes) such that any path of internal nodes away from the root has strictly increasing labels. 
An $s$-increasing tree is an increasing tree such that the internal node with label $i$ has exactly $s(i)$ children. 
We denote by~$\IT_s$ the set of~$s$-increasing trees. 
Figure \ref{fig:map_tree_to_permutation} shows two examples of  
$(3,4,4,2,5)$-increasing trees.

Gessel provided a simple bijection between the sets $\IT_s$ and $\SP_{s-\bf{1}}$ that we describe below (Gessel's 
result is unpublished but can be found for example in~\cite{JansonKubaPanholzer2011}).
Before explaining this bijection we need to introduce the notion of a cavern in a planar rooted tree.  
For a tree $T \in \T$ and an internal 
node $x$ of $T$ we call a \emph{cavern} of $T$ dermined by 
$x$ to the space between any two consecutive children of $x$ (see Figure \ref{fig:cavern}), i.e., 
if 
the ordered set of children of $v$ is $\{c_1,c_2,\dots,c_k\}$ then the set of caverns determined by
$v$ is
\[\caverns(v)=\{\{c_1,c_2\},\{c_2,c_3\},\dots,\{c_{k-1},c_k\}\}.\]
We also define $\caverns(T):=\cup_{i=1}^a\caverns(v_a)$.
Any internal node $v$ of $\deg(v)=k$ defines exactly $k-1$ caverns and hence 
there are in total $|\caverns(T)|=\sum_{i=1}^a \deg(v_i)-a$ caverns in a tree $T \in \T$ with $a$ 
internal nodes.

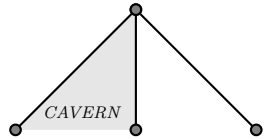
\begin{figure}
\centering
 \begin{tikzpicture}[thick,scale=0.8]
    \fill[fill=black!10] (2,2) -- (0,0) -- (2,0);
    \draw (0.3,0.3) node [anchor=west,scale=0.8]{\tiny \emph{CAVERN}};
    \tikzstyle{every node}=[circle, draw, fill=black!50,
                        inner sep=0pt, minimum width=4pt]
    \draw {
        (2,2)  node (a1){} --  (0,0)node (b1){}
	(a1)  --  (2,0)  node (b2){}
	(a1) node{} --  (4,0)node(b3){}
        };
\end{tikzpicture}
\caption{A cavern}
\label{fig:cavern}
\end{figure}

The set of caverns has a very natural ordering. We can associate each cavern 
$C=\{c_i,c_{i+1}\}$ of $x$ to the largest children $c_{i+1}$ of $x$ in $C$. In that way we can 
think of the caverns as the set of nonminimal children of internal nodes in $T$ (being $c_1$ the 
minimal child of $x$). The order in the set $\caverns(T)$ is the one induced by preorder 
in the set of nonminimal children in $T$. (see Figure \ref{fig:preordercavern}). 
We refer to this order as the \emph{preorder order of the caverns} of $T$.

For $T \in \IT_s$ we label all the caverns with the label of its corresponding internal node. Reading the caverns in preorder gives rise to an $(s-\bf{1})$-permutation 
$\Sigma(T)$. Since $T$ is an increasing tree, the caverns with label $i$ are read before or after all the caverns with 
label $j$ for $i<j$. So, the resulting $(s-\bf{1})$-permutation is $212$-avoiding, so a Stirling 
permutation. 

The process described above maps an increasing tree $T\in \IT_s$ to a Stirling 
$(s-\bf{1})$-permutation~$\Sigma(T)$. In fact, every Stirling $(s-\bf{1})$-permutation can be attained 
uniquely by exactly one such a tree. We construct the inverse map as follows: let $\sigma$ be a 
Stirling $(s-\bf{1})$-permutation. Put a root labeled 1 with $s(1)$ children labelled with the 
$s(1)$ permutations that are separated by the ones in $\sigma$ (some of which might be empty). 
Repeat the process with each of the non-empty permutations appearing in the process with the 
smallest value in the permutation instead of the one. The resulting tree~$\Lambda(\sigma)$ at the end of 
the process is the desired increasing tree, see Figure~\ref{fig:s-permutations_to_trees} for an 
example.  

\begin{figure}[htbp]
\begin{center}
\begin{tikzpicture}[thick,scale=0.35]

\begin{scope}[xshift=-460,yshift=0,thick,scale=1]
\tikzstyle{every node}=[circle,red,
                        inner sep=0.5pt, minimum width=4pt,scale=1.2]
\node (a1) at (2,5){$2233321155554$};

\end{scope}
\begin{scope}[xshift=-160,yshift=0,thick,scale=1]
\tikzstyle{every node}=[circle, draw,
                        inner sep=0.5pt, minimum width=4pt,font=\small]
\node (a1) at (2,6)[pin={[pin edge={black, dashed, -},pin distance=1pt, red]60:$v_1$}]{};
\node (b1) at (-1,4)[pin={[rectangle,pin edge={red},pin distance=4pt, red]270:$223332$}]{};
\node (b2) at (2,4){};
\node (b3) at (5,4)[pin={[rectangle,pin edge={red},pin distance=4pt, red]270:$55554$}]{};

\draw (a1)--(b1);
\draw (a1)--(b2);
\draw	 (a1)--(b3);

\tikzstyle{every node}=[circle, blue,scale=1,
                        inner sep=0.5pt, minimum width=4pt,font=\small]

\node at (1.2,4.5) {$1$};
\node at (2.8,4.5) {$1$};

\end{scope}

\begin{scope}[xshift=160,yshift=0,thick,scale=1]
\tikzstyle{every node}=[circle, draw,
                        inner sep=0.5pt, minimum width=4pt,font=\small]
\node (a1) at (2,6)[pin={[pin edge={black, dashed, -},pin distance=1pt, red]60:$v_1$}]{};
\node (b1) at (-1,4)[pin={[pin edge={black, dashed, -},pin distance=1pt, red]90:$v_2$}]{};
\node (b2) at (2,4){};
\node (b3) at (5,4)[pin={[pin edge={black, dashed, -},pin distance=1pt, red]60:$v_4$}]{};

\node (c1) at (-2.5,2) {};
\node (c2)  at (-1.5,2){};
\node (c3)  at (-0.5,2)[pin={[rectangle,pin edge={red},pin distance=4pt, red]270:$333$}]{};
\node (c4) at (0.5,2){};
\node (d1)  at  (4.5,2)[pin={[rectangle,pin edge={red },pin distance=4pt, red]270:$5555$}]{};
\node (d2) at (5.5,2) {};

\draw (a1)--(b1);
\draw (a1)--(b2);
\draw	 (a1)--(b3);
\draw (b1)--(c1);
\draw (b1)--(c2);
\draw (b1)--(c3);
\draw (b1)--(c4);
\draw (b3)--(d1);
\draw (b3)--(d2);

\tikzstyle{every node}=[circle, blue,scale=1,
                        inner sep=0.5pt, minimum width=4pt,font=\small]

\node at (1.2,4.5) {$1$};
\node at (2.8,4.5) {$1$};
\node at (-1.8,2.3) {$2$};
\node at (-1,2.3) {$2$};
\node at (0,2.3) {$2$};
\node[] at (5,2.3) {$4$};

\end{scope}

\begin{scope}[xshift=460,yshift=0,thick,scale=1]
\tikzstyle{every node}=[circle, draw,
                        inner sep=0.5pt, minimum width=4pt,font=\small]
\node (a1) at (2,6)[pin={[pin edge={black, dashed, -},pin distance=1pt, red]60:$v_1$}]{};
\node (b1) at (-1,4)[pin={[pin edge={black, dashed, -},pin distance=1pt, red]90:$v_2$}]{};
\node (b2) at (2,4){};
\node (b3) at (5,4)[pin={[pin edge={black, dashed, -},pin distance=1pt, red]60:$v_4$}]{};

\node (c1) at (-2.5,2) {};
\node (c2)  at (-1.5,2){};
\node (c3)  at (-0.5,2)[pin={[pin edge={black, dashed, -},pin distance=3pt, red]200:$v_3$}]{};
\node (c4) at (0.5,2){};
\node (d1)  at  (4.5,2)[pin={[pin edge={black, dashed, -},pin distance=1pt, red]120:$v_5$}]{};
\node (d2) at (5.5,2) {};

\node (f1) at (-2,0){};
\node (f2)  at (-1,0){};
\node (f3)  at (0,0){};
\node (f4) at (1,0){};
    
\node (e1) at (2.5,0){};
\node (e2)  at (3.5,0){};
\node (e3)  at (4.5,0){};
\node (e4) at (5.5,0){};
\node (e5) at (6.5,0){};

\draw (a1)--(b1);
\draw (a1)--(b2);
\draw	 (a1)--(b3);
\draw (b1)--(c1);
\draw (b1)--(c2);
\draw (b1)--(c3);
\draw (b1)--(c4);
\draw (b3)--(d1);
\draw (b3)--(d2);
\draw (c3)--(f1);
\draw (c3)--(f2);
\draw (c3)--(f3);
\draw (c3)--(f4);
\draw (d1)--(e1);
\draw (d1)--(e2);
\draw (d1)--(e3);
\draw (d1)--(e4);
\draw (d1)--(e5);

\tikzstyle{every node}=[circle, blue,scale=1,
                        inner sep=0.5pt, minimum width=4pt,font=\small]

\node at (1.2,4.5) {$1$};
\node at (2.8,4.5) {$1$};
\node at (-1.8,2.3) {$2$};
\node at (-1,2.3) {$2$};
\node at (0,2.3) {$2$};
\node[] at (5,2.3) {$4$};
\node[] at (-1.3,0.3) {$3$};
\node[] at (.3,0.3) {$3$};
\node[] at (-.5,0.3) {$3$};
\node[] at (3.2,0.3) {$5$};
\node[] at (4.1,0.3) {$5$};
\node[] at (5,0.3) {$5$};
\node[] at (6,0.3) {$5$};

\end{scope}

\end{tikzpicture}
\caption{Example of the map $\Lambda:\SP_{s-\bf{1}}\rightarrow\IT_s$}
\label{fig:s-permutations_to_trees}
\end{center}
\end{figure}
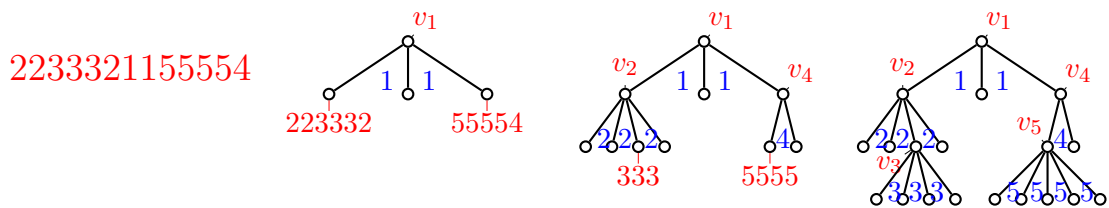

\begin{proposition}[Gessel c.f. \cite{JansonKubaPanholzer2011}]\label{prop:bijection-trees-permutations}
The map $\Sigma:\IT_s \rightarrow \SP_{s-\bf{1}}$ is a bijection between increasing trees and Stirling $(s-\bf{1})$-permutations. The inverse of $\Sigma$ is~$\Lambda$.
\end{proposition}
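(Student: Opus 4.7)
The plan is to prove the proposition by strong induction on $\ell(s)$, exploiting the following recursive decomposition of $\Sigma$. If $T\in\IT_s$ has root labeled $1$ with subtrees $T_1,\ldots,T_{s(1)}$ hanging from its children $c_1,\ldots,c_{s(1)}$, then
\[
\Sigma(T) \;=\; \Sigma(T_1)\,1\,\Sigma(T_2)\,1\,\cdots\,1\,\Sigma(T_{s(1)}),
\]
which follows directly from inspecting the preorder traversal: after the root we descend into $T_1$ (recording all its caverns), then visit the non-minimal child $c_2$ (recording the first root-level cavern, of label $1$), then descend into $T_2$, and so on. The base case $\ell(s)=0$ is trivial.

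With this identity in hand, verifying $\Sigma(T)\in\SP_{s-\mathbf{1}}$ is straightforward: each internal node with label $i$ has degree $s(i)$ and thus contributes $s(i)-1$ caverns, so the multiplicities match, and a putative $212$-pattern $a\,b\,a$ in $\Sigma(T)$ cannot have both $a$'s among the root-level caverns (they would have value $1$, forcing $b<1$), cannot have them in distinct pieces of the decomposition (labels across pieces are disjoint, and any $1$ lying between them would violate $a>b$), and by the inductive hypothesis cannot have both inside a single $\Sigma(T_i)$.

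For the inverse direction, I would show that $\Lambda$ is well-defined. The smallest label in $\sigma\in\SP_{s-\mathbf{1}}$ is $1$, and because $s(1)\geq 2$ it appears $s(1)-1\geq 1$ times, splitting $\sigma$ into $s(1)$ subwords $\sigma^{(1)},\ldots,\sigma^{(s(1))}$. The $212$-avoidance forces all occurrences of each label $\ell>1$ to lie within a single subword (otherwise two copies of $\ell$ would straddle a $1$, yielding an $\ell\,1\,\ell$ pattern). Hence the labels partition across the subwords and each $\sigma^{(i)}$ is itself a Stirling permutation on a strictly smaller label set. The recursion defining $\Lambda$ therefore terminates and produces an increasing tree whose signature agrees with $s$.

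Given both recursive characterizations, the identities $\Sigma\circ\Lambda=\mathrm{id}$ and $\Lambda\circ\Sigma=\mathrm{id}$ follow immediately by induction: applying $\Lambda$ to $\Sigma(T)$ recovers the root (as the minimum label) together with the subtrees via the inductive hypothesis on each $\Sigma(T_i)$, and conversely applying $\Sigma$ to $\Lambda(\sigma)$ reassembles $\sigma$ by the displayed identity. The main obstacle I anticipate is bookkeeping: the subtrees $T_i$ and subwords $\sigma^{(i)}$ carry labels drawn from arbitrary subsets of $\{2,\ldots,a\}$ rather than from some $\{1,\ldots,a'\}$, so to run the induction cleanly one should formulate the statement for increasing trees labeled by an arbitrary finite totally ordered set, and verify that both $\Sigma$ and $\Lambda$ commute with the unique order-preserving relabelling to the standard form.
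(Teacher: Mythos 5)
Your proof is correct, and it is essentially the paper's argument made precise: the paper attributes the result to Gessel and only sketches it, arguing $212$-avoidance globally (the caverns lying between two caverns of node $i$ belong to descendants of $i$, hence carry larger labels) and describing $\Lambda$ by exactly the splitting-at-the-$1$'s recursion you use. Your decomposition $\Sigma(T)=\Sigma(T_1)\,1\,\Sigma(T_2)\,1\cdots 1\,\Sigma(T_{s(1)})$ is the inductive form of that same construction, and your closing remark about relabelling to an arbitrary totally ordered label set is the right way to make the induction run cleanly.
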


\subsubsection{Bijection between $s$-trees and $312$-avoiding Stirling $(s-\bf{1})$-permutations}
Note that if we decorate the internal nodes of an $s$-tree $T$ with the values of $[a]$ in preorder 
we obtain an increasing tree $\tilde T\in \IT_s$, hence there is an injection $\T_s\hookrightarrow 
\IT_s$. It turns out that the image of the composition $\T_s\hookrightarrow \IT_s \rightarrow 
\SP_{s-\bf{1}}$ is precisely the set $ \SP_{s-\bf{1}}(312)$. We denote the map determined by this composition by $\tilde \Sigma$. We also denote by $\tilde \Lambda (T)$ the $s$-tree obtained by removing the labels from the increasing tree $\Lambda(T)$. 
The Stirling permutation $2233321155554$ that appear in Figures~\ref{fig:map_tree_to_permutation} and~\ref{fig:s-permutations_to_trees} is 
$312$-avoiding, giving an example of the discussed bijection.

\begin{theorem}\label{prop:bijection-trees-permutations-restricted}
The map $\tilde \Sigma:\T_s\rightarrow \SP_{s-\bf{1}}(312)$ is a bijection between $s$-trees and $312$-avoiding Stirling $(s-\bf{1})$-permutations. The inverse of $\tilde \Sigma$ is~$\tilde \Lambda$.
\end{theorem}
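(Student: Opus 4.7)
The overall strategy is to leverage Proposition~\ref{prop:bijection-trees-permutations}, which already gives a bijection $\Sigma : \IT_s \to \SP_{s-\mathbf{1}}$. The map $\tilde \Sigma$ is, by construction, the restriction of $\Sigma$ to the image of the canonical inclusion $\iota : \T_s \hookrightarrow \IT_s$ that labels the internal nodes of an $s$-tree in preorder. Thus it suffices to establish the set identity
\[
\Sigma(\iota(\T_s)) = \SP_{s-\mathbf{1}}(312),
\]
i.e.\ for any $T \in \IT_s$, the permutation $\Sigma(T)$ avoids the pattern $312$ if and only if the labels of $T$ are assigned in preorder. Granted this, the inverse of $\tilde \Sigma$ is $\Lambda$ restricted to $\SP_{s-\mathbf{1}}(312)$ followed by the forgetful map removing labels, which is precisely $\tilde \Lambda$.

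For the forward direction, suppose $T$ is preorder-labeled and, toward a contradiction, that $\Sigma(T)$ contains a $312$-pattern at positions $p_1 < p_2 < p_3$ with values $a, b, c$ satisfying $b < c < a$. These values are the labels of internal nodes $v_a, v_b, v_c$, and since the labeling is in preorder the three nodes occur in the order $v_b, v_c, v_a$ along the preorder traversal of $T$. A comparison of subtree preorder ranges then forces $v_c$ to be an ancestor of $v_a$ and $v_b$ to be a strict ancestor of $v_c$: otherwise the caverns of one of $v_b$ or $v_c$ would be confined to a preorder range disjoint from the range containing the later relevant position, contradicting the order of $p_1, p_2, p_3$. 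Let $d$ be the child of $v_b$ whose subtree contains $v_c$ (and hence $v_a$). The $b$-cavern at position $p_2$ must correspond to $d$, because any other non-minimal child of $v_b$ has preorder range disjoint from the range containing $p_1$ and $p_3$. But the cavern corresponding to $d$ is indexed by the preorder position of $d$ itself, which precedes every cavern strictly inside the subtree $T_d$; in particular it precedes the $a$-cavern at $p_1$, so $p_2 < p_1$, a contradiction.

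For the reverse direction, if the labels of $T \in \IT_s$ are not in preorder, then there exist internal nodes $v, v'$ with $v$ preceding $v'$ in preorder and $a := \mathrm{label}(v) > \mathrm{label}(v') =: b$. Let $u$ be the least common ancestor of $v$ and $v'$, with label $\ell$, and let $c_{\mathrm{left}}, c_{\mathrm{right}}$ be the children of $u$ on the paths to $v$ and $v'$ respectively, with $c_{\mathrm{left}}$ strictly preceding $c_{\mathrm{right}}$. Since $T$ is an increasing tree, $\ell < a$ and $\ell < b$. I then produce a $312$-pattern by selecting any $a$-cavern inside $T_{c_{\mathrm{left}}}$, then the $\ell$-cavern of $u$ corresponding to $c_{\mathrm{right}}$ (whose preorder index equals that of $c_{\mathrm{right}}$, placing it strictly between all caverns of $T_{c_{\mathrm{left}}}$ and all caverns inside $T_{c_{\mathrm{right}}}$), and finally any $b$-cavern inside $T_{c_{\mathrm{right}}}$. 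The resulting subword has values $a, \ell, b$ with $\ell < b < a$, which is a $312$-pattern.

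The main technical subtlety, and the step I would be most careful about, is the exact correspondence between a cavern and its index in the preorder cavern sequence. The identification from Section~\ref{sec_sincreasing_stirling} of the cavern $\{c_k, c_{k+1}\}$ with the non-minimal child $c_{k+1}$, so that the cavern's position coincides with the preorder position of $c_{k+1}$ itself, is exactly what powers both halves of the argument: in the forward direction it is what pushes the $b$-cavern strictly before every cavern inside $T_d$ and produces the contradictory $p_2 < p_1$; in the reverse direction it is what places the $\ell$-cavern precisely between the cavern ranges of two sibling subtrees. Once this indexing is kept explicit, the remainder is a straightforward bookkeeping exercise on preorder intervals.
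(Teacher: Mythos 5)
Your proposal is correct and follows essentially the same route as the paper's proof: both reduce to Gessel's bijection $\Sigma:\IT_s\to\SP_{s-\bf{1}}$ and characterize the preorder-labeled increasing trees as exactly the preimages of the $312$-avoiding permutations, deriving the forced ancestor chain $v_b,v_c,v_a$ from a $312$-occurrence in one direction and reading a $312$-pattern off the least common ancestor of a preorder-violating pair in the other. You are in fact more explicit than the paper about the cavern indexing (the paper dismisses the last step of the forward direction as ``straightforward to see''); the only point you leave implicit is that the least common ancestor $u$ is a \emph{strict} ancestor of both $v$ and $v'$ --- needed to conclude $\ell<a$ and $\ell<b$ --- which holds because increasingness forbids $v'$ from being a descendant of $v$ and preorder forbids the reverse.
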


\begin{proof}
It remains to show: 
\begin{enumerate}
\item For any $s$-tree $T$, its image $\tilde \Sigma (T)\in \SP_{s-\bf{1}}(312)$. 
\item For any $\sigma \in \SP_{s-\bf{1}}(312)$, the tree $\tilde \Lambda(\sigma)$ is an $s$-tree.
\end{enumerate}

{\it Proof of (1).}Label the internal nodes of an $s$-tree $T$ in preorder, and the caverns with the 
label of its  corresponding internal node. By Proposition 
\ref{prop:bijection-trees-permutations} we know that reading the caverns in preorder gives rise to 
a Stirling $(s-\bf{1})$-permutation $\Sigma(T)$.  Moreover, we will see 
next that it is also $312$-avoiding. We proceed by contradiction; assume the permutation has a 
subsequence $k\dots i \dots j$ with $i<j<k$. The internal node $k$ (labeled in preorder) 
necessarily has to be a descendant of the node $i$ that is not coming from its right most child. 
Similarly, the node $k$ is a descendant of the node $j$ that is not coming from its right most 
child. Drawing the unique path from node $k$ to the root visits both nodes $i$ and~$j$. Since $i<j$, 
then $j$ has to be a descendant of $i$. From this observations it is straight forward to see from 
the tree that no cavern $i$ appears in preorder between two caverns $k$ and $j$. Thus the pattern 
$312$ is impossible. 

{\it Proof of (2)} 
Let $\sigma \in \SP_{s-\bf{1}}(312)$. By Proposition 
\ref{prop:bijection-trees-permutations}, we know taht the tree $\Lambda(\sigma)$ is increasing.
We need to show that the labeling of its internal nodes is given by preorder. 
We proceed by contradiction; assume that the labeling in $\Lambda(\sigma)$ violates preorder. Consider the minimal pair $v_1,v_2$ in lexicographic preorder such that $v_1<v_2$ in preorder but the label of $v_1$ is larger than the label of $v_2$. Let $v_0$ be the smallest common ancestor of $v_1$ and~$v_2$. Since the tree is increasing, $v_2$ can not be a descendant of $v_1$. Therefore $v_0\neq v_1,v_2$, furthermore it has smaller label than $v_1$ and $v_2$ by lexicographic minimality of the pair $v_1,v_2$. If $r_0,r_1,r_2$ are the labels of $v_0,v_1,v_2$ respectively, then the permutation $\sigma$ corresponding to the tree contains a subsequence $r_1, r_0,r_2$ which satisfies $r_0<r_2<r_1$, and so it is not $312$-avoiding.
\end{proof}

\subsection{Noncrossing partitions}

A \emph{partition} $\pi=\{\pi_1,\pi_2,...,\pi_k\}$ of $[n]$ is
a collection of disjoint subsets $\pi_i \subseteq [n]$ such that $[n]=\cup_i \pi_i$. We call
each $\pi(i)$ a \emph{block} of $\pi$ and denote by $|\pi|$ the number of blocks in $\pi$. We are 
also defining a standard order in which to read the blocks of a partition $\pi$. We read the parts 
in 
increasing order of its minimal elements, that is, $\min(\pi_i) < \min(\pi_j)$ if and only if 
$i<j$. We 
call this order the \emph{minimal order} of the blocks of $\pi$.

We say that a partition $\pi$ of $[n]$ is \emph{noncrossing} if 
there are no $x<y<z<w \in [n]$ such that $x,z \in \pi_i$ and $y,w \in \pi_j$ with $i \neq 
j$. See Figure \ref{example:noncrossingpartition} for an example of noncrossing and crossing 
partitions of $8$; note that in Figure~\ref{fig:crossing} the subsets $\{3,5\}$ and 
$\{4,6\}$ belong to different blocks. 
We denote by $\NC_{n}$ the set of noncrossing partitions of 
$[n]$. In 
addition, let  $\pi_i$ and $\pi_j$ be two blocks of $\pi$, we say that 
$\pi(j)$ is \emph{nested} inside $\pi_i$ if there is a bipartition $\pi_i=B_1\cup B_2$ with both 
$B_1$ and $B_2$ 
nonempty such that $x \le y \le z$ for all $x \in B_1$, $y \in \pi_j$ and $z\in B_2$.

\begin{figure}
 \begin{subfigure}[h]{0.4\linewidth}
\centering
\begin{tikzpicture}[scale=0.8]
   \tikzstyle{every node}=[inner sep=0pt, minimum width=4pt,scale=0.8]
   \def\n{8}
   \foreach \x in {1,2,...,\n}
   \draw {(90-360/\n*\x+360/\n+1:2cm) node {\x}};
   
   \def\inicio{1} 
   \def\col{red}
      \path[fill=\col!50] (90-360/\n*\inicio+360/\n+1:1.8cm)
      \foreach \x in {1,6,7,8}{ -- (90-360/\n*\x+360/\n+1:1.8cm)};
      
    \def\inicio{2} 
    \def\col{gray}
      \path[fill=\col!50] (90-360/\n*\inicio+360/\n+1:1.8cm)
      \foreach \x in {2,3,4,5}{ -- (90-360/\n*\x+360/\n+1:1.8cm)};
\end{tikzpicture}
\caption{Noncrossing}
\end{subfigure}
\begin{subfigure}[h]{0.4\linewidth}
\centering
\begin{tikzpicture}[scale=0.8]
   \tikzstyle{every node}=[inner sep=0pt, minimum width=4pt,scale=0.8]
   \def\n{8}
   \foreach \x in {1,2,...,\n}
   \draw {(90-360/\n*\x+360/\n+1:2cm) node {\x}};
   
   \def\inicio{1} 
   \def\col{red}
      \path[fill=\col!50] (90-360/\n*\inicio+360/\n+1:1.8cm)
      \foreach \x in {1,4,6,8}{ -- (90-360/\n*\x+360/\n+1:1.8cm)};
      
    \def\inicio{2} 
    \def\col{gray}
      \path[fill=\col!50] (90-360/\n*\inicio+360/\n+1:1.8cm)
      \foreach \x in {2,3,5,7}{ -- (90-360/\n*\x+360/\n+1:1.8cm)};
\end{tikzpicture}
\caption{Crossing}\label{fig:crossing}
\end{subfigure}
\caption{Example of noncrossing and crossing partitions of $8$}
\label{example:noncrossingpartition}

\end{figure}
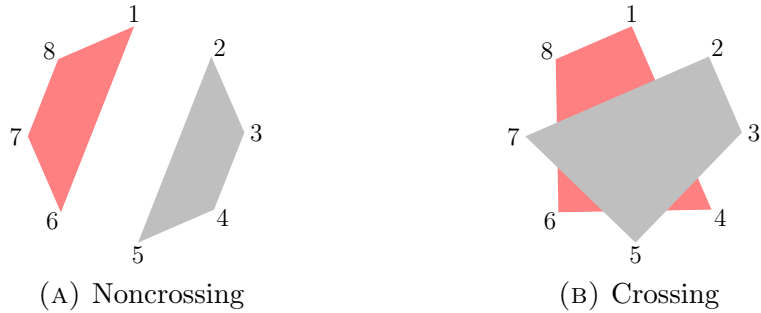

\begin{definition}[Noncrossing $s$-partitions]
\label{def_noncrossing_sPartitions}
To each partition $\pi=\{\pi_1,\pi_2,...,\pi_k\}$ of $[n]$ (where the blocks are ordered according its minimal order) we associate the 
composition $\mu(\pi):=(|\pi_1|,|\pi_2|,...,|\pi_k|)$. For $s \in \comp$, we say that 
$\pi$ is an \emph{$s$-partition} if $s\le\mu(\pi)$, that is, 
if $s$ is a refinement of~$\mu(\pi)$.
We denote the set of noncrossing $s$-partitions by $\NC_{s}$. 
\end{definition}

For example, 
$\NC_{(1^n)}$ is the set of noncrossing partitions and $\NC_{(k^n)}$ is the set of 
\emph{$k$-divisible} noncrossing partitions of $[nk]$, that is, the set of noncrossing partitions 
of $[nk]$ whose blocksizes are all divisible by $k$. The noncrossing partition in 
Figure \ref{fig:examplesnoncrossingpartition} is a $(2,3,3,1,4)$-partition, 
$(1,1,2,1,3,5)$-partition, $(1^{13})$-partition and so on; but for example is not a
$(3,3,3,4)$-partition since $(5,3,5)$ is not refined by $(3,3,3,4)$.

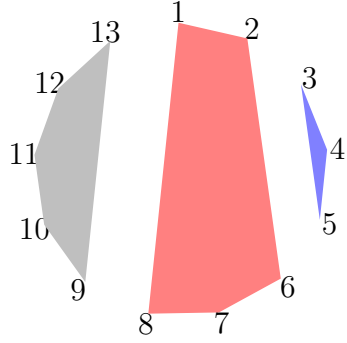
\begin{figure}
\centering
\begin{tikzpicture}[scale=0.7]
 
   \tikzstyle{every node}=[inner sep=0pt, minimum width=4pt]
   \def\n{13}
   
   \foreach \x in {1,2,...,13}
   \draw {(90-360/\n*\x+360/\n+1:3cm) node {\x}};
   
   \def\inicio{3} 
   \def\col{blue}
      \path[fill=\col!50] (90-360/\n*\inicio+360/\n+1:2.8cm)
      \foreach \x in {3,4,5}{ -- (90-360/\n*\x+360/\n+1:2.8cm)};
      
    \def\inicio{1} 
    \def\col{red}
      \path[fill=\col!50] (90-360/\n*\inicio+360/\n+1:2.8cm)
      \foreach \x in {1,2,6,7,8}{ -- (90-360/\n*\x+360/\n+1:2.8cm)};
      
      \def\inicio{9} 
    \def\col{gray}
      \path[fill=\col!50] (90-360/\n*\inicio+360/\n+1:2.8cm)
      \foreach \x in {9,10,11,12,13}{ -- (90-360/\n*\x+360/\n+1:2.8cm)};
      
\end{tikzpicture}
\caption{Example of a noncrossing $(2,3,3,1,4)$-partition}
\label{fig:examplesnoncrossingpartition}
\end{figure}

\subsubsection{Bijection between $s$-trees and noncrossing $(s-\bf{1})$-partitions}
\label{sec:bijection-trees-non-crossing}
Let $s \in \comp$ be such that $s_i \ge 2$ for all~$i$ and recall 
that $s-\mathbf{1}:=(s(1)-1,s(2)-1,\dots,s(\ell(s))-1)$. We also define $\emptyset-\mathbf{1}:=\emptyset$.
We will define a bijection $\phi: \T_s \rightarrow  \NC_{s-\mathbf{1}}$ by labeling the caverns of the tree in preorder and grouping the labels according to certain rule.

Recall that given two nodes $x$ and $y$ of $T$, $y$ is called a \emph{descendant} of~$x$ if~$x$ belongs to the 
unique 
path from $y$ to the root. A node $y$ is called a \emph{left descendant} of $x$ if $y$ is the 
minimal child of 
$x$ 
or the minimal child of a left descendant of $x$. We denote by $\leftDes(x)$ the set of left 
descendant internal nodes of $T$ union with $\{x\}$. 

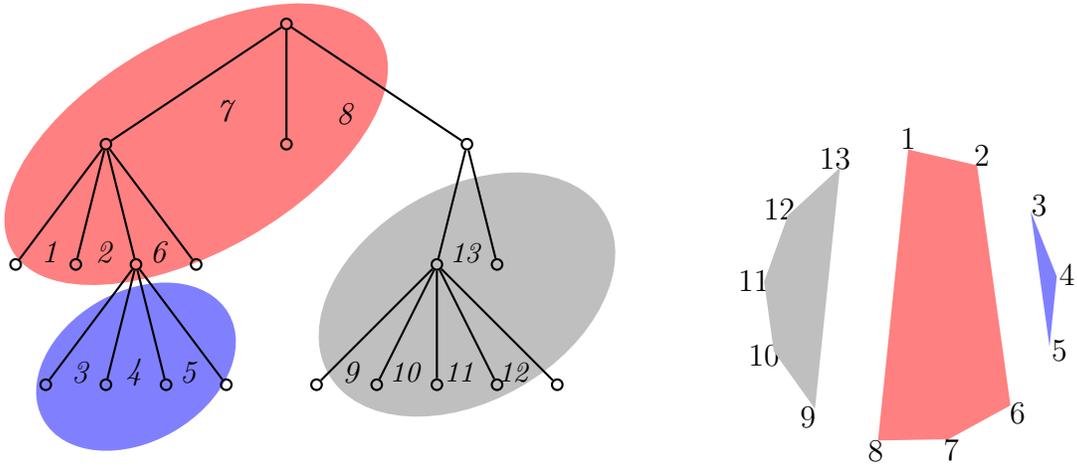
\begin{figure}
\centering
\begin{tikzpicture}[thick,scale=0.8]
\usetikzlibrary{arrows,shapes}

\node[fill=red!50,red!50,draw,ellipse, rounded corners,rotate=30, minimum width=200pt,minimum height=100pt,scale=0.8] at  (0.5,4) {};

\node[fill=blue!50,blue!50,draw,ellipse,rounded corners,rotate=30, minimum width=100pt,minimum height=70pt,scale=0.8] at  (-0.5,0.3) {};

\node[fill=gray!50,gray!50,draw, ellipse, rounded corners,rotate=30, minimum width=150pt,minimum height=100pt,scale=0.8] at  (5,1.5) {};

    \tikzstyle{every node}=[inner sep=0pt, minimum width=4pt]

    \draw (-1.9,2.2) node []{\emph{1}};
    \draw (-1,2.2) node []{\emph{2}};
    \draw (-1.4,0.2) node []{\emph{3}};
    \draw (-0.5,0.2) node []{\emph{4}};
    \draw (0.4,0.2) node []{\emph{5}};
    \draw (-0.1,2.2) node []{\emph{6}};
    \draw (1,4.5) node []{\emph{7}};
    \draw (3,4.5) node []{\emph{8}}; 
    \draw (3.1,0.2) node []{\emph{9}};
    \draw (4,0.2) node []{\emph{\small 10}};
    \draw (4.9,0.2) node []{\emph{\small 11}};
    \draw (5.8,0.2) node []{\emph{\small 12}};
 \draw (5,2.2) node []{\emph{\small 13}};

 \tikzstyle{every node}=[circle, draw,
                        inner sep=0.5pt, minimum width=4pt,font=\small]
\node (a1) at (2,6){};
\node (b1) at (-1,4){};
\node (b2) at (2,4){};
\node (b3) at (5,4){};

\node (c1) at (-2.5,2) {};
\node (c2)  at (-1.5,2){};
\node (c3)  at (-0.5,2){};
\node (c4) at (0.5,2){};
\node (d1)  at  (4.5,2){};
\node (d2) at (5.5,2) {};

\node (f1) at (-2,0){};
\node (f2)  at (-1,0){};
\node (f3)  at (0,0){};
\node (f4) at (1,0){};
    
\node (e1) at (2.5,0){};
\node (e2)  at (3.5,0){};
\node (e3)  at (4.5,0){};
\node (e4) at (5.5,0){};
\node (e5) at (6.5,0){};

\draw (a1)--(b1);
\draw (a1)--(b2);
\draw	 (a1)--(b3);
\draw (b1)--(c1);
\draw (b1)--(c2);
\draw (b1)--(c3);
\draw (b1)--(c4);
\draw (b3)--(d1);
\draw (b3)--(d2);
\draw (c3)--(f1);
\draw (c3)--(f2);
\draw (c3)--(f3);
\draw (c3)--(f4);
\draw (d1)--(e1);
\draw (d1)--(e2);
\draw (d1)--(e3);
\draw (d1)--(e4);
\draw (d1)--(e5);

\end{tikzpicture}\quad \quad \quad
\begin{tikzpicture}[scale=0.7]
 
   \tikzstyle{every node}=[inner sep=0pt, minimum width=4pt]
   \def\n{13}
   
   \foreach \x in {1,2,...,13}
   \draw {(90-360/\n*\x+360/\n+1:3cm) node {\x}};
   
   \def\inicio{3} 
   \def\col{blue}
      \path[fill=\col!50] (90-360/\n*\inicio+360/\n+1:2.8cm)
      \foreach \x in {3,4,5}{ -- (90-360/\n*\x+360/\n+1:2.8cm)};
      
    \def\inicio{1} 
    \def\col{red}
      \path[fill=\col!50] (90-360/\n*\inicio+360/\n+1:2.8cm)
      \foreach \x in {1,2,6,7,8}{ -- (90-360/\n*\x+360/\n+1:2.8cm)};
      
      \def\inicio{9} 
    \def\col{gray}
      \path[fill=\col!50] (90-360/\n*\inicio+360/\n+1:2.8cm)
      \foreach \x in {9,10,11,12,13}{ -- (90-360/\n*\x+360/\n+1:2.8cm)};
      
\end{tikzpicture}
\caption{Example of the bijection between $s$-trees and noncrossing $(s-\bf 1)$-partitions. The 
caverns of the tree are ordered in preorder.}
\label{fig:preordercavern}
\end{figure}

Let $C_1,C_2,\dots,C_{|s-\mathbf{1}|}$ be the preorder of the caverns of $T \in \T_s$. We 
define the function $\phi$ as follows:
Let $v_1,\dots,v_a$ be the internal nodes of $T$ in preorder, and let $v_{i_1},\dots,v_{i_k}$ be 
the internal nodes that are not a minimal child of another internal node. In particular, 
$v_{i_1}=v_1$ is the root of the tree.
Denote by 
\[
\pi_j := \bigcup_{v\in \leftDes(v_{i_j})}  \caverns(v),
\] 
where we are identifying a cavern $C_i$ with its index $i$.
The partition associated to $T$ is defined
\[
\phi(T):=\{\pi_1,\dots,\pi_k\}.
\]
An example of this map is illustrated in Figure~\ref{fig:preordercavern}.

For simplicity, we denote by $\{B_1,\dots, B_k\}$ the partition of $[a]$ determined by the indices $i_1,\dots,i_k$ in the following way:

\begin{equation}\label{eq:partitionBj}
B_j=     \left\{ \begin{array}{lcl}
         [i_j,i_{j+1}) & \mbox{for} & 1\leq j<a \\ 
         {[i_k,a]} & \mbox{for} & j=k. \\ 
                       \end{array}\right.
\end{equation}

\begin{lemma}\label{lem:leftDes_internalnodes}
For $1\leq j \leq k$, the internal nodes of $\leftDes(v_{i_j})$ are the nodes $\{v_\ell\}_{\ell\in 
B_j}$.
\end{lemma}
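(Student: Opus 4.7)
The plan is to deduce the claim from a single combinatorial observation about preorder traversal: \emph{an internal node $v_\ell$ (with $\ell>1$) is the minimal child of some internal node if and only if it is the minimal child of $v_{\ell-1}$.} Indeed, in a preorder traversal the node visited immediately after an internal node $u$ is $u$'s minimal child, so if $v_\ell$ is the minimal child of some $v_m$ then $v_\ell$ is enumerated right after $v_m$ in the list $v_1,\dots,v_a$ of internal nodes (there are no internal nodes between them), forcing $\ell=m+1$.

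Combining this with the defining property of the subsequence $v_{i_1},\dots,v_{i_k}$ — the list of internal nodes that are \emph{not} minimal children of any internal node — I obtain the following dichotomy for each $1<\ell\leq a$: either $\ell\in\{i_1,\dots,i_k\}$, or else $v_\ell$ is the minimal child of $v_{\ell-1}$.

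With this tool, I construct the leftmost chain of internal nodes descending from $v_{i_j}$ step by step. Inductively, for $i_j\leq\ell<i_{j+1}-1$, the index $\ell+1$ lies strictly between $i_j$ and $i_{j+1}$ and hence is not in $\{i_1,\dots,i_k\}$; by the dichotomy, $v_{\ell+1}$ is the minimal child of $v_\ell$, and since $v_\ell$ already belongs to $\leftDes(v_{i_j})$ by the inductive hypothesis, so does $v_{\ell+1}$. Conversely, the chain cannot extend past $v_{i_{j+1}-1}$: by definition $v_{i_{j+1}}$ is not a minimal child of any internal node, so it is not the minimal child of $v_{i_{j+1}-1}$, and hence is not a left descendant of $v_{i_j}$ either. (For $j=k$ the same reasoning shows the chain reaches $v_a$ uninterrupted, since there is no later $v_{i_{j+1}}$ to cut it off.) These two inclusions give exactly $\{v_\ell\mid\ell\in B_j\}$, matching the definition of $B_j$ in~\eqref{eq:partitionBj}.

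The main obstacle is notational rather than conceptual: one must keep straight the distinction between left descendants in general (which include the leaves along the leftmost chain) and \emph{internal} left descendants that populate $\leftDes(v_{i_j})$, and verify that the dichotomy above correctly captures the moment a leaf first appears as a minimal child on the chain — at that moment the next internal node visited in preorder is necessarily not a minimal child, and therefore lies in $\{v_{i_1},\dots,v_{i_k}\}$, which is precisely what terminates the chain.
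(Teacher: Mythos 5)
Your proof is correct and follows essentially the same route as the paper's: the paper also argues that $\leftDes(v_{i_j})$ is built by consecutively adding left minimal children in preorder and that the run of internal nodes terminates exactly at $v_{i_{j+1}}$, the first internal node in preorder that is not a minimal child. Your write-up merely makes explicit the key fact the paper leaves implicit, namely that the minimal child of an internal node is its immediate preorder successor, so internal left-descendant chains occupy consecutive preorder indices.
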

\begin{proof}
Let $1\leq j < k$. The set $\leftDes(v_{i_j})$ is obtained from $v_{i_j}$ by consecutively adding 
in preorder left minimal children whenever possible. The process stops when following preorder we 
find the first internal node that is not a minimal child. Since this internal node is 
$v_{i_{j+1}}$ then the internal nodes of $\leftDes(v_{i_j})$ are exactly the nodes 
$\{v_\ell\}_{\ell\in [i_j,i_{j+1})}$. 
If $j=k$, the process does not stop until covering all remaining internal nodes of $T$, and so, the 
internal nodes of $\leftDes(v_{i_k})$ are the nodes $\{v_\ell\}_{\ell\in [i_k,a]}$.
\end{proof}

\begin{lemma}
 The partition $\phi(T)$ is a noncrossing $(s-\mathbf{1})$-partition. 
\end{lemma}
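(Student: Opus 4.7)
The statement requires three things: $\phi(T)$ is a set partition of the cavern indices $[|s|-\ell(s)]$; the blocks listed in min-order give a composition refined by $s-\mathbf{1}$; and the partition is noncrossing. The first is bookkeeping: each cavern sits at a unique internal node $v$, and climbing from $v$ through minimal-child edges terminates at a unique non-minimal-child ancestor (or at the root), so $v$ belongs to exactly one set $\leftDes(v_{i_j})$. Hence the sets $\pi_j$ partition the caverns. Using Lemma~\ref{lem:leftDes_internalnodes}, $|\pi_j|=\sum_{\ell\in B_j}(s(\ell)-1)$ where $B_1,\dots,B_k$ are consecutive intervals in $[a]$, so if $\pi_1,\pi_2,\dots,\pi_k$ is the min-order then $\mu(\phi(T))$ is automatically obtained by merging consecutive blocks of $s-\mathbf{1}$.

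For both noncrossing and the min-order claim I would induct on the number of internal nodes. Fix the left chain $L_1=\{v_1,v_2,\dots,v_{i_2-1}\}$ (or all of $\{v_1,\dots,v_a\}$ if $k=1$) and let $c^{(1)},c^{(2)},\dots,c^{(m)}$ be the non-minimal children of the nodes of $L_1$, listed in the preorder of $T$, so $m=|\pi_1|$. The structural fact that drives everything is that the preorder of the caverns of $T$ takes the interleaved form
\[
C^{(1)}\,,\,\mathcal{S}_1\,,\,C^{(2)}\,,\,\mathcal{S}_2\,,\,\dots\,,\,C^{(m)}\,,\,\mathcal{S}_m,
\]
where $C^{(i)}$ is the cavern associated with $c^{(i)}$ (and so lies in $\pi_1$) and $\mathcal{S}_i$ is the preorder of caverns inside the subtree rooted at $c^{(i)}$ (empty when $c^{(i)}$ is a leaf). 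The key observation is that every cavern appearing in $\mathcal{S}_i$ belongs to a block $\pi_{j'}$ with $v_{i_{j'}}$ inside the subtree of $c^{(i)}$: climbing minimal-child edges from any internal node of this subtree cannot escape it, because $c^{(i)}$ is non-minimal and would itself be the terminating ancestor if reached.

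From this, noncrossing follows cleanly in three cases: (a) $\pi_1$ cannot cross any other block, because every other block lies inside a single gap $\mathcal{S}_i$ between consecutive caverns of $\pi_1$; (b) two blocks living in different subtrees $c^{(i)}$ and $c^{(i')}$ occupy disjoint ranges of preorder indices and so cannot cross; (c) two blocks living in the same subtree are noncrossing by the inductive hypothesis applied to $T_{c^{(i)}}$ viewed as an $s'$-tree. The same case split gives the min-order claim: $\min(\pi_1)=1$ is the very first cavern overall; blocks inside an earlier subtree have strictly smaller minima than blocks inside later subtrees; and within a single subtree the inductive hypothesis matches min-order with $j$-order.

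The main technical point to handle carefully is the bookkeeping needed to identify the blocks of $\phi(T)$ whose caverns sit inside the subtree $T_{c^{(i)}}$ with the blocks of $\phi(T_{c^{(i)}})$ (up to a global preorder shift of indices). This requires checking that ``being a minimal child in $T$'' agrees with ``being a minimal child in $T_{c^{(i)}}$'' for all internal nodes strictly below $c^{(i)}$, and that the roots of the left chains of $T_{c^{(i)}}$ are precisely the $v_{i_{j'}}$ of $T$ that lie in this subtree. Once this identification is in place the induction closes, so noncrossing, the min-order, and the refinement condition all follow simultaneously, proving that $\phi(T)\in\NC_{s-\mathbf{1}}$.
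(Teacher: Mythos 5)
Your proof is correct, but it takes a genuinely different route from the paper's. The paper handles the noncrossing property with a direct two-case argument on an arbitrary pair of blocks $\pi_r,\pi_s$ with $r<s$: since $v_{i_s}$ comes after $v_{i_r}$ in preorder it is either a descendant of $v_{i_r}$ or unrelated to it, and preorder then forces $\pi_s$ to be nested inside $\pi_r$ in the first case and entirely to the right of $\pi_r$ in the second; the refinement claim is exactly your second paragraph, via Lemma~\ref{lem:leftDes_internalnodes}. Your induction on the tree, built on the interleaved decomposition $C^{(1)},\mathcal{S}_1,C^{(2)},\mathcal{S}_2,\dots,C^{(m)},\mathcal{S}_m$ of the cavern preorder, is heavier machinery for the same conclusion, and its most delicate step --- identifying the blocks of $\phi(T)$ supported on the subtree of $c^{(i)}$ with the blocks of $\phi$ applied to that subtree, up to an index shift --- is precisely what the paper's pairwise argument avoids. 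On the other hand, your approach buys two things the paper leaves implicit: an explicit check that $\phi(T)$ is a partition of all the caverns (every cavern's minimal-child climb terminates at a unique $v_{i_j}$), and a proof that the indexing $\pi_1,\dots,\pi_k$ by preorder of the $v_{i_j}$ coincides with the minimal order of the blocks, which is needed for the refinement statement to be meaningful since $\mu(\pi)$ is defined with respect to min-order. Both write-ups are valid; the paper's is shorter, yours is more self-contained.
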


\begin{proof}
To see that $\phi(T)$ is noncrossing consider $1\leq r < s \leq k$. Since  $v_{i_r}$ comes before 
$v_{i_s}$ in preorder we have two possibilities, either $v_{i_s}$ is a descendant of $v_{i_r}$ or 
they are unrelated. And note that in these two cases preorder implies that $\pi_s$ 
is either nested in $\pi_r$ or that all the caverns in $\pi_r$ come in preorder before the ones 
in $\pi_s$.

To see that $\phi(T)$ is an $(s-\mathbf{1})$-partition note that an internal node $v_\ell$ of $T$ 
has exactly $s(\ell)-1$ caverns. Lemma~\ref{lem:leftDes_internalnodes} then implies that 
$|\pi_j| =\sum_{\ell\in B_j} (s(\ell)-1)$ for $1\leq j \leq k$. Since the sets $B_j$ form a 
partition of $[a]$ with increasing adjacent consecutive parts, then $s-\bf 1$ is a refinement of 
$\mu(\pi)$. 
\end{proof}

Given a noncrossing $(s-\bf 1)$-partition $\pi=\{\pi_1<\pi_2<\cdots <\pi_k\}$ we will 
construct a tree  ${T=\psi(\pi)}$ such that $\phi(T)=\pi$. Since 
$\pi$ is an $(s-\bf 1)$-partition we have that $(|\pi_1|,|\pi_2|,\dots |\pi_k|)$ is refined by 
$(s-\bf 1)$ meaning that $s$ can be written as $s=\tilde s_1 \oplus \tilde s_2\oplus\cdots \tilde 
s_k$ with 
$|\pi_j|=|\tilde s_j - \bf 1|$.

We say that a tree is a \emph{left descendant tree} if every internal node is either the root or 
the minimal (leftmost) child of another internal node. For $1\leq 
j\leq k$, let $\tau_j$ be the (unique) left descendant tree with signature $\tilde s_j$, and label 
the caverns of $\tau_j$ in preorder with the elements of $\pi_j$ (note this is possible since the 
number 
of caverns of $\tau_j$ is exactly equal to $|\pi_j|$). 

\begin{lemma}\label{lemma:gluingleftdescendanttrees}
 Let $\{\tau_1,\dots,\tau_k\}$ be a set of left descendant trees such that the caverns of tree 
$\tau_j$ have been labeled in preorder with the set $\pi_j$ where $\pi=\{\pi_1<\pi_2<\cdots 
<\pi_k\}$ is a noncrossing $(s-\bf 1)$-partition of $[n]$. Then there is a unique way to glue 
together the trees $\tau_j$ such that they form a planar rooted tree $T$ satisfying:
\begin{enumerate}
 \item\label{condition:gluing1} No pair of trees $\tau_i$ and $\tau_j$ together form a larger left 
descendant subtree in $T$.
 \item\label{condition:gluing2} The caverns of $T$ are labeled in preorder.
 \item\label{condition:gluing3} $\signat(T)=s$.
\end{enumerate} 
\end{lemma}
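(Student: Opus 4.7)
The plan is to give an explicit gluing rule, verify that it yields a tree satisfying the three conditions, and then show that conditions (1) and (2) force any such $T$ to coincide with this construction. Set $T := \tau_1$ and process $j = 2, 3, \ldots, k$ in the minimal order of the blocks. At step $j$, let $b_j := \min \pi_j$; since $b_j - 1 < b_j$, the label $b_j - 1$ lies in some block $\pi_i$ with $\min \pi_i \leq b_j - 1$, so $i < j$ in the minimal order, meaning that $\tau_i$ has already been placed and carries a unique cavern labeled $b_j - 1$. Attach $\tau_j$ by identifying its root with the non-minimal child of the internal node of $\tau_i$ associated with that cavern (i.e., the child immediately to the right of the cavern labeled $b_j - 1$), which is still a leaf at step $j$ because the block minima $\{b_l\}_{l \geq 2}$ are pairwise distinct.

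Having produced $T$, I would verify the three conditions. Condition (1) is immediate: each $\tau_j$ with $j > 1$ has its root attached as a non-minimal child, so distinct $\tau$'s never fuse into a single chain of minimal children. For condition (3), I would argue that in preorder of $T$ the roots of $\tau_1, \tau_2, \ldots, \tau_k$ are visited in this order: whenever $\tau_{j'}$'s subtree lies inside $\tau_j$'s subtree we have $j < j'$ by the construction, and when two $\tau$'s sit in different branches of a common ancestor $\tau_h$ the noncrossing structure of $\pi$ forces their attachment caverns in $\tau_h$ to appear in preorder of $\tau_h$ in block-index order. Concatenating the signatures then yields $\signat(T) = \tilde s_1 \oplus \cdots \oplus \tilde s_k = s$. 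Condition (2) is the delicate point and is established by induction on $k$: when $\tau_j$ is attached at cavern $b_j - 1$, the entire subtree of $T$ rooted at $\tau_j$'s root (including all nested $\tau_l$'s) contributes, in preorder of $T$, exactly the contiguous interval of labels starting at $b_j$ and ending just before the next cavern of the surrounding trunk; the noncrossing hypothesis is what makes this region a contiguous interval of integers, so that the caverns of $T$ in preorder read off as $1, 2, \ldots, n$.

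For uniqueness, suppose $T$ satisfies (1)--(3). By (2), the cavern labeled $b_j$, which is the first cavern visited inside $\tau_j$'s subtree in preorder of $T$ (since the labels of $\tau_j$'s caverns are assigned from $\pi_j$ in preorder of $\tau_j$ in increasing order), is preceded by the cavern labeled $b_j - 1$; by (1), the root of $\tau_j$ cannot be a minimal child, so it must appear as a non-minimal child. Together these two facts force $\tau_j$'s root to coincide with the non-minimal child associated with the cavern labeled $b_j - 1$, matching the construction, so $T$ is unique. The main obstacle throughout the argument is the verification of condition (2), whose proof relies essentially on the noncrossing property of $\pi$.
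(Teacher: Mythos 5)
Your proposal is correct and follows essentially the same route as the paper's proof: an inductive gluing in the minimal order of the blocks, attaching the root of $\tau_j$ at the (non-minimal-child) position immediately following the cavern labeled $\min\pi_j-1$, verifying the three conditions via the noncrossing property, and deriving uniqueness from the fact that conditions (1) and (2) force this attachment point. Your write-up is, if anything, slightly more explicit than the paper's on why the attachment position is still a leaf at step $j$ and on the contiguity argument behind condition (2).
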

\begin{proof}
 By condition (\ref{condition:gluing1}) we cannot glue two left 
descendant trees, say $\tau_i$ and $\tau_j$ with $i < j$, by attaching the root of $\tau_i$ to the 
left-most leaf of $\tau_j$ since they will form a larger left descendant tree. Now condition 
(\ref{condition:gluing2}) and the fact of the partition $\pi$ is noncrossing implies that for $i<j$ 
either  $\tau_j$ is nested in $\tau_i$ or all labels of $\tau_j$ are larger than all the labels of 
$\tau_i$. So given tree $T_1=\tau_1$ (whose $\min \pi_1=1$) there is a unique way of gluing 
$\tau_2$ to obtain $T_2$, its root needs to be glued in preorder to the leaf of $\tau_1$ 
immediately after the cavern labeled $\min \pi_2-1$ (if $\min \pi_2-1$ were not a cavern of 
$\tau_1$ then $\pi$ would not be a partition of $[n]$) we repeat this process inductively by gluing 
$\tau_j$ to $T_{j-1}$ to obtain $T_j$, its root glued in preorder to the leaf of 
$T_{j-1}$ immediately after the cavern labeled $\min \pi_j-1$. This construction preserves the 
preorder labeling and always glues $\tau_j$ to a leaf of $T_{j-1}$ that is not the leftmost leaf 
of another tree $\tau_i$. Now note that $\signat(\tau_1)=\tilde s_1$ and since $T_1=\tau_1$ is a 
left descendant tree then when attaching $\tau_2$ to form $T_2$ all the internal nodes of $T_1$ 
come in preorder before the internal nodes of $\tau_2$ hence $\signat(T_2)=\tilde s_1 \oplus \tilde 
s_2$. In general when attaching $\tau_j$ to $T_{j-1}$ to obtain $T_j$, all the internal nodes of 
$\tau_j$ come last in preorder and hence $\signat(T_j)=\signat(T_{j-1})\oplus\tilde s_j$, this 
inductively implies condition (\ref{condition:gluing3}).
\end{proof}

Given the conditions of Lemma \ref{lemma:gluingleftdescendanttrees} where the 
labels of the caverns in tree $\tau_j$ are the elements of $\pi_j$, we define $T:=\psi(\pi)$ where 
$T$ is the unique tree that the lemma predicts. 

\begin{figure}[tb]
\centering
\begin{tikzpicture}[thick,scale=0.8]
\usetikzlibrary{arrows,shapes}

\begin{scope}
\node[fill=red!50,red!50,draw,ellipse, rounded corners,rotate=30, minimum width=200pt,minimum height=100pt,scale=0.8] at  (0.5,4) {};



    \tikzstyle{every node}=[inner sep=0pt, minimum width=4pt]

    \draw (-1.9,2.2) node []{\emph{1}};
    \draw (-1,2.2) node []{\emph{2}};
    \draw (-0.1,2.2) node []{\emph{6}};
    \draw (1,4.5) node []{\emph{7}};
    \draw (3,4.5) node []{\emph{8}}; 

 \tikzstyle{every node}=[circle, draw,
                        inner sep=0.5pt, minimum width=4pt,font=\small]
\node (a1) at (2,6){};
\node (b1) at (-1,4){};
\node (b2) at (2,4){};
\node (b3) at (5,4){};

\node (c1) at (-2.5,2) {};
\node (c2)  at (-1.5,2){};
\node (c3)  at (-0.5,2){};
\node (c4) at (0.5,2){};

    

\draw (a1)--(b1);
\draw (a1)--(b2);
\draw	 (a1)--(b3);
\draw (b1)--(c1);
\draw (b1)--(c2);
\draw (b1)--(c3);
\draw (b1)--(c4);
\end{scope}

\begin{scope}[xshift=7.5cm, yshift=4cm]
\node[fill=blue!50,blue!50,draw,ellipse,rounded corners,rotate=30, minimum width=100pt,minimum height=70pt,scale=0.8] at  (0.5,0.5) {};

    \tikzstyle{every node}=[inner sep=0pt, minimum width=4pt]

    \draw (-0.4,0.2) node []{\emph{3}};
    \draw (0.5,0.2) node []{\emph{4}};
    \draw (1.4,0.2) node []{\emph{5}};

 \tikzstyle{every node}=[circle, draw,
                        inner sep=0.5pt, minimum width=4pt,font=\small]

\node (c4) at (0.5,2){};
\node (f1) at (-1,0){};
\node (f2)  at (0,0){};
\node (f3)  at (1,0){};
\node (f4) at (2,0){};

\draw (c4)--(f1);
\draw (c4)--(f2);
\draw (c4)--(f3);
\draw (c4)--(f4);

\end{scope}

\begin{scope}[xshift=8.5cm, yshift=2cm]
\node[fill=gray!50,gray!50,draw, ellipse, rounded corners,rotate=30, minimum width=150pt,minimum height=100pt,scale=0.8] at  (5,1.5) {};

    \tikzstyle{every node}=[inner sep=0pt, minimum width=4pt]

    \draw (3.1,0.2) node []{\emph{9}};
    \draw (4,0.2) node []{\emph{\small 10}};
    \draw (4.9,0.2) node []{\emph{\small 11}};
    \draw (5.8,0.2) node []{\emph{\small 12}};
 \draw (5,2.2) node []{\emph{\small 13}};

 \tikzstyle{every node}=[circle, draw,
                        inner sep=0.5pt, minimum width=4pt,font=\small]
\node (b3) at (5,4){};

\node (d1)  at  (4.5,2){};
\node (d2) at (5.5,2) {};

\node (e1) at (2.5,0){};
\node (e2)  at (3.5,0){};
\node (e3)  at (4.5,0){};
\node (e4) at (5.5,0){};
\node (e5) at (6.5,0){};

\draw (b3)--(d1);
\draw (b3)--(d2);

\draw (d1)--(e1);
\draw (d1)--(e2);
\draw (d1)--(e3);
\draw (d1)--(e4);
\draw (d1)--(e5);

\end{scope}

\end{tikzpicture}\quad \quad \quad
\caption{The recursive construction of the tree $T=\psi(\pi)$ for the noncrossing $(s-\bf 1)$-
partition $\pi=\{\{1,2,6,7,8\},\{3,4,5\},\{9,10,11,12,13\}\}$ with $s=(3,4,4,2,5)$.}
\label{fig:preordercavern2}
\end{figure}
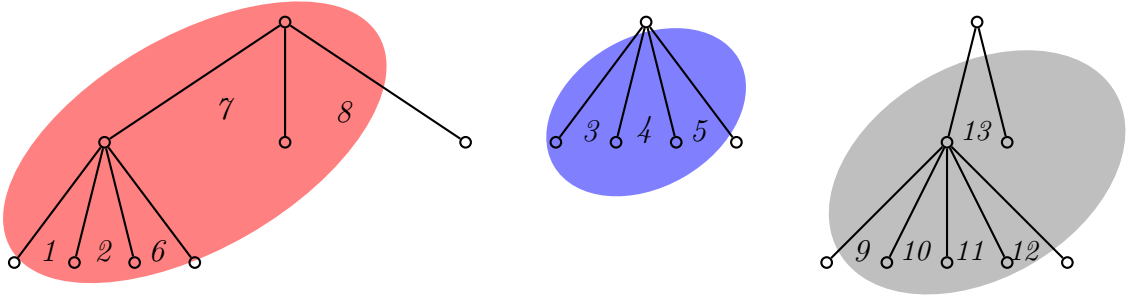

\begin{theorem}\label{thm:bijection_trees_paths}
Let $s \in \comp$ be such that $s_i \ge 2$ for all~$i$.
The map $\phi:\T_s\rightarrow \NC_{s- \bf 1}$ is a bijection between $s$-trees and 
noncrossing $(s-\mathbf{1})$-partitions. The inverse of $\phi$ is~$\psi$.
\end{theorem}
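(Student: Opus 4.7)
The plan is to invoke the uniqueness assertion in Lemma~\ref{lemma:gluingleftdescendanttrees} as the engine for checking both $\psi \circ \phi = \mathrm{id}_{\T_s}$ and $\phi \circ \psi = \mathrm{id}_{\NC_{s-\mathbf{1}}}$. Once one identifies the maximal left descendant subtrees of an $s$-tree $T$ with the reconstruction pieces $\tau_j$ used by $\psi$, the inverse relationship between $\phi$ and $\psi$ will follow almost mechanically, since a planar rooted tree is uniquely determined by such a decomposition together with a gluing satisfying conditions (1)--(3) of that lemma.

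To show $\psi \circ \phi = \mathrm{id}_{\T_s}$, I will take $T \in \T_s$ and $\pi = \phi(T) = \{\pi_1, \ldots, \pi_k\}$, and let $T_j$ denote the left descendant subtree of $T$ rooted at $v_{i_j}$: its internal nodes are the left descendants of $v_{i_j}$ inside $T$, and its leaves are the remaining children of those internal nodes. By Lemma~\ref{lem:leftDes_internalnodes} the internal nodes of $T_j$ are $\{v_\ell\}_{\ell \in B_j}$ in preorder, so $T_j$ is the unique left descendant tree of signature $\tilde s_j := (s(i_j), \ldots, s(i_{j+1}-1))$, and $s = \tilde s_1 \oplus \cdots \oplus \tilde s_k$ with $|\pi_j| = |\tilde s_j - \mathbf{1}|$. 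The caverns of $T_j$ are precisely those with indices in $\pi_j$, so labeling them in preorder inside $T_j$ with the elements of $\pi_j$ in increasing order reproduces the labeling inherited from the global preorder of caverns of $T$. Thus $T$, viewed as the gluing of the $T_j$'s, satisfies conditions (1)--(3) of Lemma~\ref{lemma:gluingleftdescendanttrees}, and by the uniqueness of the gluing $T = \psi(\pi) = \psi(\phi(T))$.

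For $\phi \circ \psi = \mathrm{id}_{\NC_{s-\mathbf{1}}}$, I will start with a noncrossing $(s-\mathbf{1})$-partition $\pi$, set $T = \psi(\pi)$, and observe that condition~(1) in the gluing lemma forces the $\tau_j$'s to be exactly the maximal left descendant subtrees of $T$. Consequently their roots are the internal nodes of $T$ that are not minimal children, namely the nodes $v_{i_1}, \ldots, v_{i_k}$ used by $\phi$. Since the caverns of each $\tau_j$ were labeled in preorder inside $\tau_j$ with the elements of $\pi_j$ in increasing order, and the global preorder of caverns of $T$ restricts to the preorder of caverns of $\tau_j$, applying $\phi$ recovers exactly $\pi$.

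The main technical step, shared by both directions, is the claim that the global preorder of caverns of $T$, restricted to the caverns of a maximal left descendant subtree $T_j$, agrees with the preorder of caverns computed inside $T_j$ itself. This is where the noncrossing structure really enters: the preorder traversal of $T$ visits the internal nodes of $T_j$ along its left spine in their intrinsic order, interrupted only by the full traversal of the subtrees of nested blocks hanging off non-minimal children. Spelling this out carefully, and verifying that the gluing instructions in $\psi$ always find an available leaf at the correct attachment point at every step, will be the main bookkeeping burden of the proof.
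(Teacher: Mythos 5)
Your proposal follows essentially the same route as the paper's own proof: both directions are reduced to identifying the maximal left descendant subtrees of $T$ with the reconstruction pieces $\tau_j$, invoking Lemma~\ref{lem:leftDes_internalnodes} for the block structure of the internal nodes, and then appealing to the uniqueness of the gluing in Lemma~\ref{lemma:gluingleftdescendanttrees}. Your explicit flagging of the compatibility between the global preorder of caverns and the intrinsic preorder within each maximal left descendant subtree is a point the paper's proof passes over more quickly, but it is the same argument.
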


\begin{proof}
If $\pi=\phi(T)$ then by the definition of $\phi$ every block $\pi_j \in \pi$ gets its labels from 
the caverns of a maximal left descendant subtree $\tau_j$ in $T$. If $\tau_j$ has 
$\signat(\tau_j)=\tilde s_j$ then $s=\tilde s_1\oplus\tilde s_2\oplus\cdots \tilde s_k$ since all 
of these 
left descendant subtrees are consecutive in preorder by Lemma~\ref{lem:leftDes_internalnodes}. But 
with these conditions together $\tilde s_j$ and $\pi_j$ recover $\tau_j$ uniquely and Lemma 
\ref{lemma:gluingleftdescendanttrees} says that there is a unique way to glue the $\tau_j$ together 
so $\psi(\pi)=T$. Similarly, if $T=\psi(\pi)$ then by construction a maximal left descendant 
subtree of $T$ have its caverns labeled by a block of $\pi$ and by Lemma 
\ref{lemma:gluingleftdescendanttrees} the labels of the caverns of $T$ are in preorder. Since each 
block of $\phi(T)$ is determined by the set of labels in the caverns of a maximal left descendant 
subtree when the caverns of $T$ have been labeled in preorder then $\phi(T)=\pi$. 
\end{proof}

\subsection{Noncrossing  matchings}

A \emph{complete matching} in the \emph{complete graph} $K_{2n}$ (the graph $(V,E)$ with 
vertex set $V=[2n]$ and with edge set $E=\{S\subseteq [2n]\,\mid\, |S|=2\}$) is a partition of the 
set $[2n]$ in blocks of cardinality $2$. We say that the matching is \emph{noncrossing} if this 
partition is \emph{noncrossing}. It is known that the set of complete noncrossing matchings in 
$K_{2n}$ is a Catalan family (see for example \cite{Stanley2015}). For any $n \ge 1$ 
the \emph{complete 
hypergraph} 
$\K_{n}$ on $n$ vertices is the pair $(V,E)$ where $V=[n]$ is the set of \emph{vertices} and 
$E=\{S\subseteq [2n]\,\mid\, S\neq \emptyset \}$ the set of \emph{edges} (Note that in this
definition we consider vertices as edges of cardinality one). 

\begin{definition}[Complete noncrossing $s$-matchings]
\label{def_sMatchings}
For $s \in \comp$, a \emph{complete 
noncrossing $s$-matching} in 
$\K_{|s|}$ is a 
noncrossing partition $M$ of the set $[|s|]$ such that $M$ satisfies $|M_i|=s(i)$ when we order 
the blocks of $M=\{M_1<M_2<\dots <M_{a}\}$ in the minimal order (such that $\min M_i < \min M_j$ 
whenever $i<j$). We denote by $\CM_s$ the set of complete noncrossing $s$-matchings.
\end{definition}

Figure \ref{fig:examplessmatching} illustrates an example of a complete noncrossing $(3,4,4,2,5)$-matching.

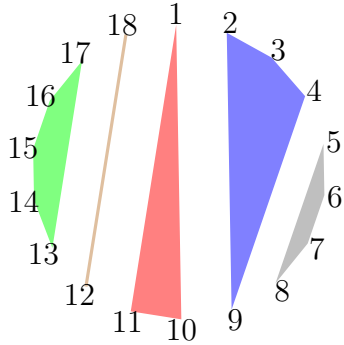
\begin{figure}
\centering
    \begin{tikzpicture}[scale=0.7]
     
    \tikzstyle{every node}=[inner sep=0pt, minimum width=4pt]
   \def\n{18}
   
   \def\inicio{2} 
   \def\col{blue}
      \path[fill=\col!50] (90-360/\n*\inicio+360/\n+1:2.8cm)
      \foreach \x in {2,3,4,9}{ -- (90-360/\n*\x+360/\n+1:2.8cm)};
      
    \def\inicio{1} 
    \def\col{red}
      \path[fill=\col!50] (90-360/\n*\inicio+360/\n+1:2.8cm)
      \foreach \x in {1,10,11}{ -- (90-360/\n*\x+360/\n+1:2.8cm)};
      
      \def\inicio{5} 
    \def\col{gray}
      \path[fill=\col!50] (90-360/\n*\inicio+360/\n+1:2.8cm)
      \foreach \x in {5,6,7,8}{ -- (90-360/\n*\x+360/\n+1:2.8cm)};
      
      \def\inicio{12} 
    \def\col{brown}
      \path[draw,very thick, fill=\col!50,\col!50] (90-360/\n*\inicio+360/\n+1:2.8cm)
      \foreach \x in {12,18}{ -- (90-360/\n*\x+360/\n+1:2.8cm)};

 \def\inicio{13} 
    \def\col{green}
      \path[fill=\col!50] (90-360/\n*\inicio+360/\n+1:2.8cm)
      \foreach \x in {13,14,15,16,17}{ -- (90-360/\n*\x+360/\n+1:2.8cm)};

   \foreach \x in {1,2,...,\n}
   \draw {(90-360/\n*\x+360/\n+1:3cm) node {\x}};

    \end{tikzpicture}
\caption{Example of a complete $(3,4,4,2,5)$-matching}
\label{fig:examplessmatching}
\end{figure}

\begin{remark}
 Note that for $s\in\comp$ both, $(s-\mathbf{1})$-partitions and complete $s$-matchings are 
partitions of different sets ($[|s|-\ell(s)+1]$ and $[|s|]$ respectively). We use the name of 
complete $s$-matchings to denote the later type of partitions because they provide 
with a suitable generalization of the concept of complete matching in $K_{2n}$. Also note that a 
given partition can be a $\nu$-paritition for different values of $\nu$ however a complete 
matching $M$ in $\K_{|s|}$ can only be a complete $s$-matching for the particular 
$s=(|M_1|,|M_2|,\dots,|M_a|)$.
\end{remark}

\subsubsection{Bijection between $s$-trees and complete noncrossing $s$-matchings}
For $T \in \T_s$ let $v_0, v_1,v_2,\dots,v_{|s|}$ be the listing of the nodes of $T$ in preorder 
(where $v_0$ is the root of $T$) and 
let $v_{i_1},v_{i_2},\dots,v_{i_{a}}$ be the sublist of internal nodes in preorder. For each 
internal node $x$ 
let $\child(x)$ be the set of children of $x$. The matching 
$\varphi(T):=\{M_1, M_2,\cdots,M_a\}$ is defined as the partition with blocks 
$M_k=\{j\,\mid\, v_j \in \child(v_{i_k})\}$. See Figure \ref{fig:treestomatchings} for an example 
of the map $\phi$.

\begin{figure}
\centering
\begin{tikzpicture}[]
\usetikzlibrary{arrows,shapes}

\begin{scope}[thick,scale=0.45]
\node[fill=gray!50,gray!50,draw, ellipse, rounded corners,rotate=0, minimum width=70pt,minimum height=30pt,scale=0.8] at  (-1,-0.3) {};

\node[fill=green!50,green!50,draw, ellipse, rounded corners,rotate=0, minimum width=80pt,minimum height=30pt,scale=0.8] at  (5,-0.5) {};

\node[fill=red!50,red!50,draw, ellipse, rounded corners,rotate=0, minimum width=130pt,minimum height=30pt,scale=0.8] at  (2,4.5) {};

\node[fill=blue!50,blue!50,draw, ellipse, rounded corners,rotate=0, minimum width=90pt,minimum height=30pt,scale=0.8] at  (-1,2) {};

\node[fill=brown!50,brown!50,draw, ellipse, rounded corners,rotate=0, minimum width=60pt,minimum height=30pt,scale=0.8] at  (5,2) {};

\tikzstyle{every node}=[circle, draw,
                        inner sep=0.5pt, minimum width=4pt,font=\small,scale=0.9]
\node (a1) at (2,6)[pin={[color=red,pin distance=3pt]135:$0$}]{};
\node (b1) at (-1,4)[pin={[color=red,pin distance=3pt]135:$1$}]{};
\node (b2) at (2,4)[pin={[color=red,pin distance=3pt]135:$10$}]{};
\node (b3) at (5,4)[pin={[color=red,pin distance=3pt]45:$11$}]{};

\node (c1) at (-2.5,2) [pin={[color=red,pin distance=3pt]225:$2$}]{};
\node (c2)  at (-1.5,2)[pin={[color=red,pin distance=3pt]225:$3$}]{};
\node (c3)  at (-0.5,2)[pin={[color=red,pin distance=3pt]220:$4$}]{};
\node (c4) at (0.5,2)[pin={[color=red,pin distance=3pt]-45:$9$}]{};
\node (d1)  at  (4.5,2)[pin={[color=red,pin distance=3pt]135:$12$}]{};
\node (d2) at (5.5,2) [pin={[color=red,pin distance=3pt]45:$18$}] {};

\node (f1) at (-2,0)[pin={[color=red,pin distance=3pt]225:$5$}]{};
\node (f2)  at (-1,0)[pin={[color=red,pin distance=3pt]225:$6$}]{};
\node (f3)  at (0,0)[pin={[color=red,pin distance=3pt]225:$7$}]{};
\node (f4) at (1,0)[pin={[color=red,pin distance=3pt]230:$8$}]{};
    
\node (e1) at (2.7,0)[pin={[color=red,pin distance=3pt]310:$13$}]{};
\node (e2)  at (3.5,0)[pin={[color=red,pin distance=3pt]315:$14$}]{};
\node (e3)  at (4.5,0)[pin={[color=red,pin distance=3pt]315:$15$}]{};
\node (e4) at (5.5,0)[pin={[color=red,pin distance=3pt]315:$16$}]{};
\node (e5) at (6.5,0)[pin={[color=red,pin distance=3pt]315:$17$}]{};

\draw (a1)--(b1);
\draw (a1)--(b2);
\draw	 (a1)--(b3);
\draw (b1)--(c1);
\draw (b1)--(c2);
\draw (b1)--(c3);
\draw (b1)--(c4);
\draw (b3)--(d1);
\draw (b3)--(d2);
\draw (c3)--(f1);
\draw (c3)--(f2);
\draw (c3)--(f3);
\draw (c3)--(f4);
\draw (d1)--(e1);
\draw (d1)--(e2);
\draw (d1)--(e3);
\draw (d1)--(e4);
\draw (d1)--(e5);

\end{scope}
\begin{scope}[xshift=6cm,yshift=1.5cm, scale=0.6]
 
   \tikzstyle{every node}=[inner sep=0pt, minimum width=4pt]
   \def\n{18}
   
   \def\inicio{2} 
   \def\col{blue}
      \path[fill=\col!50] (90-360/\n*\inicio+360/\n+1:2.8cm)
      \foreach \x in {2,3,4,9}{ -- (90-360/\n*\x+360/\n+1:2.8cm)};
      
    \def\inicio{1} 
    \def\col{red}
      \path[fill=\col!50] (90-360/\n*\inicio+360/\n+1:2.8cm)
      \foreach \x in {1,10,11}{ -- (90-360/\n*\x+360/\n+1:2.8cm)};
      
      \def\inicio{5} 
    \def\col{gray}
      \path[fill=\col!50] (90-360/\n*\inicio+360/\n+1:2.8cm)
      \foreach \x in {5,6,7,8}{ -- (90-360/\n*\x+360/\n+1:2.8cm)};
      
      \def\inicio{12} 
  \def\col{brown}
      \path[draw,very thick, fill=\col!50,\col!50] (90-360/\n*\inicio+360/\n+1:2.8cm)
      \foreach \x in {12,18}{ -- (90-360/\n*\x+360/\n+1:2.8cm)};

 \def\inicio{13} 
    \def\col{green}
      \path[fill=\col!50] (90-360/\n*\inicio+360/\n+1:2.8cm)
      \foreach \x in {13,14,15,16,17}{ -- (90-360/\n*\x+360/\n+1:2.8cm)};

   \foreach \x in {1,2,...,\n}
   \draw {(90-360/\n*\x+360/\n+1:3cm) node {\x}};

\end{scope}

  \end{tikzpicture}
\caption{Example of the bijection between $s$-trees and noncrossing $s$-matchings. The 
nodes of the tree are ordered in preorder.}
\label{fig:treestomatchings}
\end{figure}
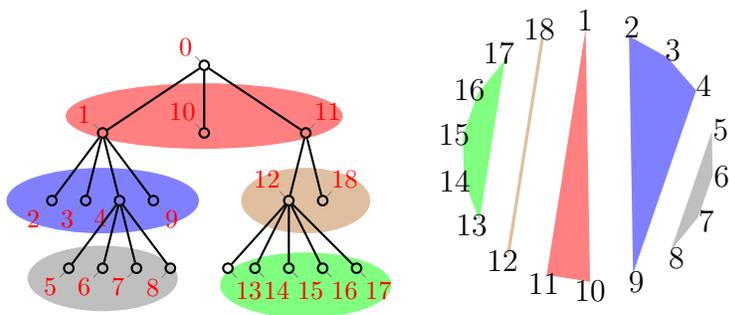

\begin{lemma}\label{lemma:phiminimal} Let $v_{i_k}$ be the $k$-th internal node of $T$ in preorder 
then $$\min M_k=i_k+1.$$ Consequently, $M_1, M_2,\dots,M_a$ is the minimal order listing of 
$\phi(T)$.
\end{lemma}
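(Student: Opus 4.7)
The plan is to exploit the definition of preorder traversal directly. Recall that in a tree $T=[T_1,T_2,\dots,T_k]$ with root $r$, preorder visits $r$ first and then recursively traverses $T_1, T_2, \dots, T_k$ in that order. Consequently, for any internal node $v$ appearing in preorder position $j$ (so $v=v_j$), the very next node visited, in position $j+1$, is the root of the first subtree hanging from $v$, i.e.\ the leftmost child of $v$. This is the single observation that drives everything.

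Applying this to the internal node $v_{i_k}$: the node $v_{i_k+1}$ is precisely the leftmost child of $v_{i_k}$, so $i_k+1\in M_k$. Any other child of $v_{i_k}$ is the root of a later subtree of $v_{i_k}$ and is therefore visited in preorder strictly after $v_{i_k+1}$, so its index in the preorder list is strictly greater than $i_k+1$. Hence
\[
\min M_k = i_k+1,
\]
as claimed.

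For the second assertion, it suffices to verify that $\min M_1<\min M_2<\dots<\min M_a$. But by construction $v_{i_1},v_{i_2},\dots,v_{i_a}$ is the sublist of internal nodes in preorder, so $i_1<i_2<\dots<i_a$, and therefore $i_1+1<i_2+1<\dots<i_a+1$. Combining this with the formula $\min M_k=i_k+1$ gives the minimal-order listing.

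There is essentially no obstacle here: the statement is really an unpacking of the definition of preorder. The only mild care needed is to observe that $v_{i_k}$ is guaranteed to have a child (it is internal), so $v_{i_k+1}$ really does exist and is a child of $v_{i_k}$, and that the remaining children of $v_{i_k}$ are visited strictly later in preorder because each one is the root of a subsequent subtree rooted at $v_{i_k}$.
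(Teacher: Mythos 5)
Your proof is correct and follows exactly the same route as the paper's: the key observation that an internal node and its leftmost child are consecutive in preorder, so $\min M_k = i_k+1$, with the other children necessarily appearing later. You simply spell out the details (existence of the child, the monotonicity $i_1<\cdots<i_a$ for the second assertion) that the paper leaves implicit.
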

\begin{proof}
 An internal node and its leftmost child are consecutive in preorder, then $\min M(k)= 
\min \{j\,\mid\, v_j \in \child(v_{i_k})\}=i_k+1$.
\end{proof}

\begin{lemma}
 The partition $\varphi(T)$ is a complete noncrossing $s$-matching.
\end{lemma}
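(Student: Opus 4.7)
The plan is to verify the three defining properties of a complete noncrossing $s$-matching in Definition~\ref{def_sMatchings}: that $\varphi(T)$ is a set partition of $[|s|]$, that its block-sizes match $s$ when the blocks are listed in minimal order, and that it is noncrossing.

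First I would handle the partition property. Every non-root node of $T$ is the child of a unique internal node, so the sets $M_1,\dots,M_a$ are pairwise disjoint and their union is $\{j : v_j \neq v_0\} = [|s|]$ (since $T$ has $|s|+1$ nodes read as $v_0,v_1,\dots,v_{|s|}$ in preorder). For the block-sizes, by Lemma~\ref{lemma:phiminimal} the indices $\min M_k = i_k+1$ are strictly increasing in $k$, so the listing $M_1,M_2,\dots,M_a$ is exactly the minimal-order listing. Since $v_{i_k}$ is the $k$-th internal node in preorder, $|M_k|=\deg(v_{i_k})=s(k)$, matching the signature $s$.

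The main work is noncrossing, and the key observation I would use is the standard fact about preorder: the set of indices in preorder of all descendants (including the node itself) of any internal node $v_{i_k}$ forms a contiguous interval $[i_k, i_k+N_k]$, where $N_k$ is the number of proper descendants of $v_{i_k}$. Fix $r<s$ in $[a]$, and let $v_{i_r},v_{i_s}$ be the corresponding internal nodes. Because $r<s$ we have $i_r<i_s$ in preorder, so there are only two cases:
\begin{enumerate}
\item $v_{i_s}$ is a descendant of $v_{i_r}$. Then the whole subtree rooted at $v_{i_s}$ lies in the preorder interval of some single child $c$ of $v_{i_r}$, and in particular $M_s$ is contained in the preorder interval between $c$ (inclusive) and the next child of $v_{i_r}$ in preorder (exclusive). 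Consequently every element of $M_s$ lies strictly between two consecutive elements of $M_r$ (or after the first and before the last), so $M_s$ is nested in $M_r$.
\item $v_{i_s}$ is not a descendant of $v_{i_r}$. Then the preorder interval of the subtree rooted at $v_{i_r}$ lies entirely to the left of $v_{i_s}$, hence all indices in $M_r$ are smaller than all indices in $M_s$.
\end{enumerate}
In either case, no two blocks cross, which gives the noncrossing property.

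I do not expect any serious obstacle here; the only thing to be careful about is articulating the preorder-interval property precisely, since that is the one geometric fact doing all the work. Once that is stated, the argument splits cleanly into the two cases above, and combined with the block-size and partition checks it shows $\varphi(T)\in\CM_s$.
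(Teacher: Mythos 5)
Your proposal is correct and takes essentially the same route as the paper: the same two-case analysis (descendant versus unrelated, driven by the preorder-interval property of subtrees) for the noncrossing part, and the same use of Lemma~\ref{lemma:phiminimal} together with $|M_k|=\deg(v_{i_k})=s(k)$ for the block sizes; your explicit check that the blocks partition $[|s|]$ is a harmless addition the paper leaves implicit. The only nitpick, shared with the paper's own write-up, is that when $v_{i_s}$ sits in the subtree of the \emph{last} child of $v_{i_r}$ the block $M_s$ is not nested in $M_r$ but lies entirely after it — still noncrossing, so the conclusion is unaffected.
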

\begin{proof}
{\bf  $\varphi(T)$ is noncrossing:} Two different blocks of $\varphi(T)$ come from two different internal 
nodes 
$v_{i_{k}}$ and $v_{i_{l}}$, say with $v_{i_{k}} < v_{i_{l}}$ in preorder. If $v_{i_{k}}$ and 
$v_{i_{l}}$ are unrelated all the nodes in $\child(v_{i_{k}})$ occur before the nodes in 
$\child(v_{i_{l}})$ in preorder. If $v_{i_{k}}$ is an ancestor of $v_{i_{l}}$ then all the nodes 
in $\child(v_{i_{l}})$ are also descendant of exactly one child of $v_{i_{k}}$, hence they occur
in between two preorder consecutive elements of $\child(v_{i_{k}})$ and $M_l$ is nested in $M_k$.

{\bf  $\varphi(T)$ is a complete $s$-matching:}  By Lemma \ref{lemma:phiminimal} $M_1,M_2\dots,M_a$ is 
the minimal 
order listing of $\varphi(T)$ and $|M_k|=\deg(v_{i_k})=s(k)$  then 
\begin{align*}
    (|M_1|,|M_2|\dots,|M_a|)=(s(1),\dots,s(a))=s.
\end{align*}
\end{proof}

The inverse map $\gamma:\CM_s\rightarrow \T_s$ is defined as follows. Let 
$M=\{M_1,M_2,\dots,M_a\} \in \CM_{s}$ be a 
complete noncrossing $s$-matching whose blocks are ordered according to the minimal order and 
recall that by definition $|M_i|=s(i)$. Define 
the tree $\gamma(M)$ as follows (see Figure \ref{fig:map_matching_to_tree} for an example):

\begin{enumerate}
\item First, start with the root $r=v_0$ with $|M_1|$ children with increasing left-to-right 
labels  $\{v_j\,\mid\, j\in M_1\}$ and call the resulting tree $\tau_1$.
\item Attach an internal node to $\tau_1$ in $v_{\min{M_2}-1}$ with $s(2)$ children with increasing 
left-to-right labels $\{v_j\,\mid\, j\in M_2\}$ and call the resulting tree $\tau_2$.
\item In general, at the $j$th step of the process we have a tree $\tau_{j-1}$ with $j-1$ internal 
nodes $v_{i_1},\dots,v_{i_j}$ ordered in preorder. The $j$th tree $\tau_j$ is obtained from 
$\tau_{j-1}$ by attaching an internal node in $v_{\min{M_j}-1}$ with $s(j)$ children with 
increasing left-to-right labels $\{v_j\,\mid\, j\in M_j\}$.
\item The process finishes after attaching the $a$ internal nodes $v_{i_1},\dots 
,v_{i_a}$.
\end{enumerate}

\begin{figure}[htb]
\centering
 \begin{tikzpicture}[]
\usetikzlibrary{arrows,shapes}

\begin{scope}[xshift=0cm,thick,scale=0.35]

\tikzstyle{every node}=[circle, draw,
                        inner sep=0.5pt, minimum width=4pt,font=\small,scale=0.9]
\node (a1) at (2,6)[pin={[color=red,pin distance=3pt]135:$0$}]{};
\node (b1) at (-1,4)[pin={[color=red,pin distance=3pt]135:$1$}]{};
\node (b2) at (2,4)[pin={[color=red,pin distance=3pt]135:$10$}]{};
\node (b3) at (5,4)[pin={[color=red,pin distance=3pt]45:$11$}]{};

\draw (a1)--(b1);
\draw (a1)--(b2);
\draw	 (a1)--(b3);

\tikzstyle{every node}=[inner sep=0.5pt,font=\small,scale=0.9]
\node at (2,-2){
\begin{tabular}{c}
$M=\{\{1,10,11\}\}$\\
\end{tabular}
};
\end{scope}

\begin{scope}[xshift=4cm,thick,scale=0.35]

\tikzstyle{every node}=[circle, draw,
                        inner sep=0.5pt, minimum width=4pt,font=\small,scale=0.9]
\node (a1) at (2,6)[pin={[color=red,pin distance=3pt]135:$0$}]{};
\node (b1) at (-1,4)[pin={[color=red,pin distance=3pt]135:$1$}]{};
\node (b2) at (2,4)[pin={[color=red,pin distance=3pt]135:$10$}]{};
\node (b3) at (5,4)[pin={[color=red,pin distance=3pt]45:$11$}]{};

\node (c1) at (-2.5,2) [pin={[color=red,pin distance=3pt]225:$2$}]{};
\node (c2)  at (-1.5,2)[pin={[color=red,pin distance=3pt]225:$3$}]{};
\node (c3)  at (-0.5,2)[pin={[color=red,pin distance=3pt]225:$4$}]{};
\node (c4) at (0.5,2)[pin={[color=red,pin distance=3pt]45:$9$}]{};

\draw (a1)--(b1);
\draw (a1)--(b2);
\draw	 (a1)--(b3);
\draw (b1)--(c1);
\draw (b1)--(c2);
\draw (b1)--(c3);
\draw (b1)--(c4);

\tikzstyle{every node}=[inner sep=0.5pt,font=\small,scale=0.9]
\node at (2,-2){
\begin{tabular}{c}
$M=\{\{1,10,11\},\{2,3,4,9\}\}$\\
\end{tabular}
};
\end{scope}
\begin{scope}[xshift=8.5cm,thick,scale=0.35]

\tikzstyle{every node}=[circle, draw,
                        inner sep=0.5pt, minimum width=4pt,font=\small,scale=0.9]
\node (a1) at (2,6)[pin={[color=red,pin distance=3pt]135:$0$}]{};
\node (b1) at (-1,4)[pin={[color=red,pin distance=3pt]135:$1$}]{};
\node (b2) at (2,4)[pin={[color=red,pin distance=3pt]135:$10$}]{};
\node (b3) at (5,4)[pin={[color=red,pin distance=3pt]45:$11$}]{};

\node (c1) at (-2.5,2) [pin={[color=red,pin distance=3pt]225:$2$}]{};
\node (c2)  at (-1.5,2)[pin={[color=red,pin distance=3pt]225:$3$}]{};
\node (c3)  at (-0.5,2)[pin={[color=red,pin distance=3pt]215:$4$}]{};
\node (c4) at (0.5,2)[pin={[color=red,pin distance=3pt]45:$9$}]{};

\node (f1) at (-1,0)[pin={[color=red,pin distance=3pt]225:$5$}]{};
\node (f2)  at (0,0)[pin={[color=red,pin distance=3pt]225:$6$}]{};
\node (f3)  at (1,0)[pin={[color=red,pin distance=3pt]225:$7$}]{};
\node (f4) at (1.8,0)[pin={[color=red,pin distance=3pt]230:$8$}]{};
    
\draw (a1)--(b1);
\draw (a1)--(b2);
\draw	 (a1)--(b3);
\draw (b1)--(c1);
\draw (b1)--(c2);
\draw (b1)--(c3);
\draw (b1)--(c4);
\draw (c3)--(f1);
\draw (c3)--(f2);
\draw (c3)--(f3);
\draw (c3)--(f4);

\tikzstyle{every node}=[inner sep=0.5pt,font=\small,scale=0.9]
\node at (2,-2.5){
\begin{tabular}{c}
$M=\{\{1,10,11\},\{2,3,4,9\},$\\$\{5,6,7,8\}\}$\\
\end{tabular}
};
\end{scope}
\begin{scope}[xshift=2cm,yshift=-4cm,thick,scale=0.35]

\tikzstyle{every node}=[circle, draw,
                        inner sep=0.5pt, minimum width=4pt,font=\small,scale=0.9]
\node (a1) at (2,6)[pin={[color=red,pin distance=3pt]135:$0$}]{};
\node (b1) at (-1,4)[pin={[color=red,pin distance=3pt]135:$1$}]{};
\node (b2) at (2,4)[pin={[color=red,pin distance=3pt]135:$10$}]{};
\node (b3) at (5,4)[pin={[color=red,pin distance=3pt]45:$11$}]{};

\node (c1) at (-2.5,2) [pin={[color=red,pin distance=3pt]225:$2$}]{};
\node (c2)  at (-1.5,2)[pin={[color=red,pin distance=3pt]225:$3$}]{};
\node (c3)  at (-0.5,2)[pin={[color=red,pin distance=3pt]215:$4$}]{};
\node (c4) at (0.5,2)[pin={[color=red,pin distance=3pt]45:$9$}]{};
\node (d1)  at  (4.5,2)[pin={[color=red,pin distance=3pt]135:$12$}]{};
\node (d2) at (5.5,2) [pin={[color=red,pin distance=3pt]45:$18$}] {};

\node (f1) at (-2,0)[pin={[color=red,pin distance=3pt]225:$5$}]{};
\node (f2)  at (-1,0)[pin={[color=red,pin distance=3pt]225:$6$}]{};
\node (f3)  at (0,0)[pin={[color=red,pin distance=3pt]225:$7$}]{};
\node (f4) at (1,0)[pin={[color=red,pin distance=3pt]230:$8$}]{};
    
\draw (a1)--(b1);
\draw (a1)--(b2);
\draw	 (a1)--(b3);
\draw (b1)--(c1);
\draw (b1)--(c2);
\draw (b1)--(c3);
\draw (b1)--(c4);
\draw (b3)--(d1);
\draw (b3)--(d2);
\draw (c3)--(f1);
\draw (c3)--(f2);
\draw (c3)--(f3);
\draw (c3)--(f4);

\tikzstyle{every node}=[inner sep=0.5pt,font=\small,scale=0.9]
\node at (2,-2.5){
\begin{tabular}{c}
$M=\{\{1,10,11\},\{2,3,4,9\},$\\
$\{5,6,7,8\},\{12,18\}\}$\\
\end{tabular}
};
\end{scope}
\begin{scope}[xshift=7cm,yshift=-4cm,thick,scale=0.35]

\tikzstyle{every node}=[circle, draw,
                        inner sep=0.5pt, minimum width=4pt,font=\small,scale=0.9]
\node (a1) at (2,6)[pin={[color=red,pin distance=3pt]135:$0$}]{};
\node (b1) at (-1,4)[pin={[color=red,pin distance=3pt]135:$1$}]{};
\node (b2) at (2,4)[pin={[color=red,pin distance=3pt]135:$10$}]{};
\node (b3) at (5,4)[pin={[color=red,pin distance=3pt]45:$11$}]{};

\node (c1) at (-2.5,2) [pin={[color=red,pin distance=3pt]225:$2$}]{};
\node (c2)  at (-1.5,2)[pin={[color=red,pin distance=3pt]225:$3$}]{};
\node (c3)  at (-0.5,2)[pin={[color=red,pin distance=3pt]215:$4$}]{};
\node (c4) at (0.5,2)[pin={[color=red,pin distance=3pt]45:$9$}]{};
\node (d1)  at  (4.5,2)[pin={[color=red,pin distance=3pt]135:$12$}]{};
\node (d2) at (5.5,2) [pin={[color=red,pin distance=3pt]45:$18$}] {};

\node (f1) at (-2,0)[pin={[color=red,pin distance=3pt]225:$5$}]{};
\node (f2)  at (-1,0)[pin={[color=red,pin distance=3pt]225:$6$}]{};
\node (f3)  at (0,0)[pin={[color=red,pin distance=3pt]225:$7$}]{};
\node (f4) at (1,0)[pin={[color=red,pin distance=3pt]230:$8$}]{};
    
\node (e1) at (2.7,0)[pin={[color=red,pin distance=3pt]310:$13$}]{};
\node (e2)  at (3.5,0)[pin={[color=red,pin distance=3pt]315:$14$}]{};
\node (e3)  at (4.5,0)[pin={[color=red,pin distance=3pt]315:$15$}]{};
\node (e4) at (5.5,0)[pin={[color=red,pin distance=3pt]315:$16$}]{};
\node (e5) at (6.5,0)[pin={[color=red,pin distance=3pt]315:$17$}]{};

\draw (a1)--(b1);
\draw (a1)--(b2);
\draw	 (a1)--(b3);
\draw (b1)--(c1);
\draw (b1)--(c2);
\draw (b1)--(c3);
\draw (b1)--(c4);
\draw (b3)--(d1);
\draw (b3)--(d2);
\draw (c3)--(f1);
\draw (c3)--(f2);
\draw (c3)--(f3);
\draw (c3)--(f4);
\draw (d1)--(e1);
\draw (d1)--(e2);
\draw (d1)--(e3);
\draw (d1)--(e4);
\draw (d1)--(e5);

\tikzstyle{every node}=[inner sep=0.5pt,font=\small,scale=0.9]
\node at (2,-3){
\begin{tabular}{c}
$M=\{\{1,10,11\},\{2,3,4,9\},$\\
$\{5,6,7,8\},\{12,18\},$\\
$\{13,14,15,16,17\}\}$\\
\end{tabular}
};
\end{scope}
\end{tikzpicture}
\caption{The recursive construction of the tree $\gamma(M)$ for the complete noncrossing 
$(3,4,4,2,5)$-matching $M=\{\{1,10,11\},\{2,3,4,9\},\{5,6,7,8\},\{12,18\},\{13,14,15,16,17\}\}$.}
\label{fig:map_matching_to_tree}
\end{figure}
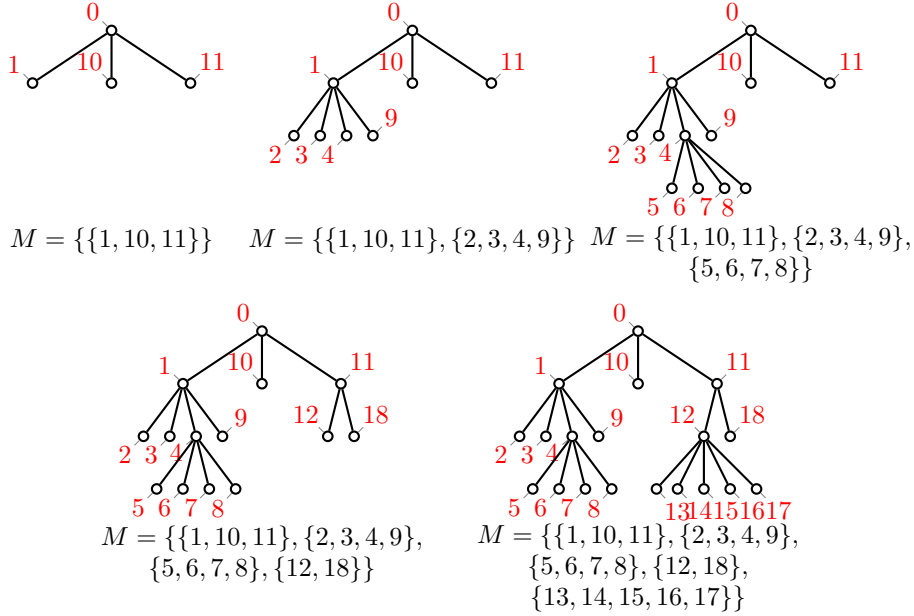

We will use the following simple lemma.
\begin{lemma}\label{lemma:noncrossing}
 If $\{M_1,M_2,\dots M_k\}$ is a noncrossing partition whose blocks are ordered according to the minimal order, then $M_k$ and 
$\bigcup_{i=1}^{k-1} M_i$ are also noncrossing.
\end{lemma}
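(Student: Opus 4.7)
The plan is to prove the lemma by contradiction, using the ordering condition $\min M_1 < \min M_2 < \cdots < \min M_k$ to produce a four-element crossing among the original blocks whenever the two-block coarsening $\{M_k, \bigcup_{i<k}M_i\}$ is crossing.

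Concretely, suppose $M_k$ and $B:=\bigcup_{i<k}M_i$ cross. Then there exist integers $a_1<b_1<a_2<b_2$ with $\{a_1,a_2\}$ contained in one of $M_k,B$ and $\{b_1,b_2\}$ in the other. I would split into two cases:

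\textbf{Case 1:} $a_1,a_2\in M_k$ and $b_1,b_2\in B$. Pick $j<k$ such that $b_1\in M_j$, and let $m:=\min M_j$. Because the blocks are listed in minimal order, $m < \min M_k \leq a_1 < b_1$, so in particular $m\neq b_1$. Then $m<a_1<b_1<a_2$ with $m,b_1\in M_j$ and $a_1,a_2\in M_k$, which is a crossing between $M_j$ and $M_k$.

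\textbf{Case 2:} $a_1,a_2\in B$ and $b_1,b_2\in M_k$. Pick $j'<k$ such that $a_2\in M_{j'}$, and let $m':=\min M_{j'}$. Again $m' < \min M_k \leq b_1 < a_2$, hence $m'\neq a_2$. Then $m'<b_1<a_2<b_2$ with $m',a_2\in M_{j'}$ and $b_1,b_2\in M_k$, a crossing between $M_{j'}$ and $M_k$.

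Both cases contradict the hypothesis that $\{M_1,\dots,M_k\}$ is noncrossing, completing the argument. The only delicate point is making sure the auxiliary elements $m$ and $m'$ are genuinely distinct from the $a_i,b_i$ we started with; this is guaranteed by the strict inequality $\min M_j<\min M_k$ for $j<k$, which is the only real use of the minimal-order assumption. Everything else is a direct unpacking of the definition of crossing, so I expect no significant obstacle.
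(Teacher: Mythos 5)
Your proof is correct. The paper states this as a ``simple lemma'' and supplies no proof at all, so there is nothing to compare against; your contrapositive argument is a complete and valid justification. The key move --- replacing the element of $B=\bigcup_{i<k}M_i$ that sits in the ``wrong'' position by the minimum $m$ of its own block $M_j$, and using $\min M_j<\min M_k\leq$ (the relevant element of $M_k$) to slide $m$ to the far left of the pattern --- does exactly what is needed, and your two cases exhaust the ways the coarsened pair $\{M_k,B\}$ could cross. The strict inequalities $m<a_1<b_1<a_2$ (resp.\ $m'<b_1<a_2<b_2$) automatically make all four entries distinct, so the ``delicate point'' you flag is indeed handled.
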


\begin{lemma}
 $\gamma(M)$ is an $s$-tree with all its nodes labeled in preorder.
\end{lemma}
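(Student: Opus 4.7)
The plan is to prove the statement by induction on $j$, maintaining at each stage three invariants on the intermediate tree $\tau_j$: (a) its set of node labels is $\{0\} \cup M_1 \cup \cdots \cup M_j$; (b) its internal nodes are precisely $v_{\min M_i - 1}$ for $i = 1, \ldots, j$, each having degree $|M_i| = s(i)$; and (c) reading the labels in preorder produces an increasing sequence. The base case $j=1$ is immediate: $v_0$ is the root and its $|M_1|$ children are labeled by the elements of $M_1$ in increasing left-to-right order, with $1 = \min M_1 \in M_1$.

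For the inductive step, I first verify that the construction at step $j$ is well-defined, i.e.\ that $v_{\min M_j - 1}$ is a leaf of $\tau_{j-1}$. Since the blocks are listed in minimal order, every integer in $\{1, \ldots, \min M_j - 1\}$ must lie in some block $M_i$ with $i < j$ (otherwise its block would violate the minimal ordering), so by (a) the value $\min M_j - 1$ is already a label of $\tau_{j-1}$; and by (b) together with the distinctness of the block minima, it cannot coincide with $\min M_i - 1$ for any $i < j$, hence is not an internal node label and must be a leaf. Properties (a) and (b) for $\tau_j$ then follow directly from the construction, as we attach $|M_j| = s(j)$ new children at this leaf.

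The crux is to establish (c) for $\tau_j$. The preorder traversal of $\tau_j$ agrees with that of $\tau_{j-1}$ on their common nodes, and the new children of $v_{\min M_j - 1}$ are inserted in increasing label order immediately after $v_{\min M_j - 1}$ in the preorder list. For the resulting sequence to remain increasing, it suffices to show that no label of $\tau_{j-1}$ lies strictly between $\min M_j$ and $\max M_j$. This is the main obstacle, and it is precisely where the noncrossing hypothesis is used: if some $k \in M_i$ with $i < j$ satisfied $\min M_j < k < \max M_j$, then combined with $\min M_i < \min M_j$ from the minimal ordering one would have $\min M_i < \min M_j < k < \max M_j$ with $\{\min M_i, k\} \subseteq M_i$ and $\{\min M_j, \max M_j\} \subseteq M_j$, contradicting the noncrossing property of $M$.

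Applying the induction through $j = a$, the tree $\gamma(M) = \tau_a$ has label set $\{0\} \cup [|s|]$ read in increasing order by preorder, and its internal nodes are $v_{\min M_i - 1}$ for $i = 1, \ldots, a$. Since $\min M_1 - 1 < \min M_2 - 1 < \cdots < \min M_a - 1$, these internal nodes appear in preorder in this order with successive degrees $s(1), s(2), \ldots, s(a)$, yielding $\signat(\gamma(M)) = s$ and confirming that $\gamma(M) \in \T_s$ with its nodes labeled in preorder. The remaining work is routine bookkeeping on the recursive construction.
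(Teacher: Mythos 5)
Your proof is correct and takes essentially the same approach as the paper's: well-definedness of the construction follows from the minimal ordering of the blocks, and preservation of the preorder labeling follows from the noncrossing condition --- your observation that no label of $\tau_{j-1}$ lies strictly between $\min M_j$ and $\max M_j$ is exactly the paper's appeal to the fact that $M_j$ and $\bigcup_{i<j}M_i$ are noncrossing, so $M_j$ is either nested in a gap of the earlier labels or lies entirely above them. Your write-up is somewhat more careful: you make the induction invariants explicit and verify that $v_{\min M_j-1}$ is a leaf rather than an already-attached internal node, a point the paper leaves implicit.
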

\begin{proof}To see that the map is well-defined note that in the step $j$ the node 
$v_{\min{M_j}-1}$ has already been attached to the tree $\tau_{j-1}$ otherwise there is a block 
$M_k$ with $\min{M_j}-1 \in M_k$ and $k>j$ implying $\min M_k < \min M_i$ contradicting the minimal 
ordering of $M$.

In the step $j$ we are attaching leaves with the labels in $M_j$. Since $M$ is noncrossing
by Lemma \ref{lemma:noncrossing} $M_j$ and 
$\bigcup_{i=1}^{j-1} M_i$ are noncrossing implying that the labels of $M_j$ are either nested or 
are all greater than the labels in $\bigcup_{i=1}^{j-1} M_i$. This implies that when we 
attach the vertices $\{v_{i}\,\mid\,i\in M_j\}$ as children of $v_{\min M_j-1}$ in 
$\tau_j$, the labels of the nodes in $\tau_j$ are still consistent with preorder.
\end{proof}

\begin{theorem}
 The map $\varphi:\T_s\rightarrow \CM_s$ is a bijection between $s$-trees and complete noncrossing 
$s$-matchings. The inverse of $\varphi$ is~$\gamma$.
\end{theorem}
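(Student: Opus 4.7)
The plan is to show the two identities $\gamma\circ\varphi=\mathrm{id}_{\T_s}$ and $\varphi\circ\gamma=\mathrm{id}_{\CM_s}$, which together establish that $\varphi$ and $\gamma$ are mutually inverse bijections. Since the preceding lemmas already guarantee that $\varphi(T)\in\CM_s$ and that $\gamma(M)$ is an $s$-tree whose nodes are labeled $v_0,v_1,\dots,v_{|s|}$ in preorder, only the inverse relations remain.

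To prove $\varphi(\gamma(M))=M$, I would argue by induction on the construction of $\gamma(M)$. At the $j$-th step a new internal node is attached at $v_{\min M_j-1}$ with children whose preorder labels are exactly the elements of $M_j$. Because the tree $\gamma(M)$ is labeled in preorder, the node attached at step $j$ is the $j$-th internal node of $\gamma(M)$ in preorder, that is $v_{i_j}$ with $i_j=\min M_j-1$. Applying $\varphi$ then reads off, for each $k$, the set of preorder indices of $\child(v_{i_k})$, which by construction equals $M_k$. Since the blocks of $\varphi(\gamma(M))$ are already listed in minimal order (Lemma~\ref{lemma:phiminimal} confirms this), we conclude $\varphi(\gamma(M))=M$.

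To prove $\gamma(\varphi(T))=T$, let $M=\varphi(T)$. By Lemma~\ref{lemma:phiminimal} we have $\min M_k=i_k+1$ for every $k$, where $v_{i_k}$ is the $k$-th internal node of $T$ in preorder. Thus the $k$-th step of the construction of $\gamma(M)$ attaches an internal node precisely at the vertex $v_{i_k}$, with $s(k)$ children receiving the preorder labels $\{j:v_j\in\child(v_{i_k})\}$. Hence each step of $\gamma$ faithfully reproduces an internal node of $T$ together with its children, in the correct preorder position. Since the internal nodes of $T$ processed in preorder determine $T$ completely (leaves and internal nodes together exhaust the vertex set and the parent-child relation is recovered at each attachment), we obtain $\gamma(\varphi(T))=T$.

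The main obstacle is keeping the bookkeeping of preorder indices consistent across the recursive construction of $\gamma$; the key technical ingredient that makes everything fit is Lemma~\ref{lemma:phiminimal}, which identifies $\min M_k-1$ with the preorder position $i_k$ of the $k$-th internal node. Once this identification is in hand, both inverse checks reduce to a straightforward induction on the step index $j$, and no further case analysis is required.
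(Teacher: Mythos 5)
Your proposal is correct and follows essentially the same route as the paper: both arguments hinge on Lemma~\ref{lemma:phiminimal} identifying $\min M_k-1$ with the preorder index $i_k$ of the $k$-th internal node, and both conclude by noting that the preorder-labeled internal nodes together with their labeled children sets determine the tree (respectively the matching) completely. Your version merely makes the induction on the step index more explicit than the paper's terse write-up.
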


\begin{proof}
After labeling the nodes of $T$ in preorder. Lemma \ref{lemma:phiminimal}  and the 
constructive definition of $\gamma$ imply that the labels of the internal nodes in $T$ and 
$\gamma(\varphi(T))$ are the same. Also, the constructive definition of both $\varphi$ and $\gamma$ imply 
that for each internal node $v_{i_k}$ the labeled set of children in $T$ and $\gamma(\varphi(T))$ 
are equal. These two conditions determine $T$ completely, therefore $T=\gamma(\varphi(T))$.	

Similarly, in the construction of the tree  $\gamma(M)$ the labels of a block $M_k$ are the 
children of an internal node. Since every block of $\varphi(T)$ is the set of labels of the children of 
an internal node then we have that $\varphi(\gamma(M))=M$.
\end{proof}

\section{The fundamental recurrence}
\label{sec_fundamentalRecurrence}
Trees have an inherent recursive nature, in the case of $s$-trees the recursion is 
very natural: Any $s$-tree $T$ can be constructed as $T=[T_1,T_2,...,T_{s(1)}]$ for $s(1)$ suitable 
trees $T_1,T_2,\dots,T_{s(1)}$ satisfying  
$$\signat(T)=(s(1))\oplus\signat(T_1)\oplus\cdots\oplus\signat(T_{s(1)}).$$
From this we can obtain a recursion for the number $C_s$ of $s$-Catalan structures:

\begin{align}\label{equation:fundamentalrecursion}
C_s=\sum_{(s)}C_{s_1}C_{s_2}\cdots C_{s_{s(1)}},
\end{align}
where the sum is over all sequences $(s_1,s_2,\dots,s_{s(1)})$ of compositions such that 
$s=(s(1))\oplus s_1\oplus s_2\oplus\cdots \oplus s_{s(1)}$ and where $C_{\emptyset}=1$.

Recursion (\ref{equation:fundamentalrecursion}) is at the heart of $s$-Catalan combinatorics and 
it is a generalization of the classical recursion for the number $C_n$ of Catalan objects. By 
letting $C_n:=C_{(2^n)}$ and $C_0=1$ in (\ref{equation:fundamentalrecursion}) we obtain the 
classical recursion:
\begin{align}\label{equation:catalanrecursion}
C_{n+1}=\sum_{k=0}^nC_{k}C_{n-k}.
\end{align}

\subsection{Catalan decompositions}
For a family $\A_s$ of $s$-Catalan objects recursion (\ref{equation:fundamentalrecursion}) can be 
realized by obtaining a rule of decomposition of an $s$-object $A=[A_1,A_2,\dots,A_{s(1)}]$ into 
$s(1)$ objects $A_1\in \A_{s_1}$, $A\in \A_{s_2}$, ..., $A_{s(1)}\in \A_{s_{s(1)}}$ such that 
$s=(s(1))\oplus s_1\oplus\cdots\oplus s_{s(1)}$. We call such a rule a \emph{Catalan decomposition}.

Whenever we have a pair of families of objects $\A_n$ and $\B_n$ with  
$\A_{\emptyset}=\{A_{\emptyset}\}$ and $\B_{\emptyset}=\{B_{\emptyset}\}$ both with known Catalan 
decomposition rules. Then a family of bijections $\xi_{s}:\A_s\rightarrow \B_s$ are easily 
defined recursively by:

 \begin{enumerate}
    \item[($\xi$1)]\label{definition:xi1} 
    	$\xi_{\emptyset}(A_{\emptyset})=B_{\emptyset}$.
    \item[($\xi$2)]\label{definition:xi2} 
    	Otherwise, if $A=[A_1 ,A_2,\cdots,A_k]$ with $A_i \in \A_{s_i}$ then
	\[\xi_{s}(A):=[\xi_{s_1}(A_1),\xi_{s_2}(A_2),\dots,\xi_{s_k}(A_k)]\]    
  \end{enumerate}

Such family of bijections sometimes can be described in a nicer and direct way without using 
recursion. We illustrate in Section~\ref{section:angulationofpolygon} and Section~\ref{sec_parenthesizations} the use of the recursion (\ref{equation:fundamentalrecursion}) with the examples of angulations of a convex polygon and parenthesizations of a word.

On the other hand if there is a family of bijections $\xi_{s}:\A_s\rightarrow \B_s$ where the 
Catalan decomposition rule for the objects in $\A_s$ is known, then we might be able to transport this rule to 
the objects in $\B_s$ using the maps $\xi_{s}$. This idea can 
be used to describe Catalan decompositions for other families of $s$-Catalan objects, 
like $s$-Dyck paths, $312$-avoiding Stirling $(s-{\bf 1})$-permutations, noncrossing $s$-partitions, and complete 
noncrossing $s$-matchings.

\subsection{Catalan decomposition for Dyck paths}
The Catalan decomposition of an $s$-Dyck path~$D$ is relatively simple. We remove the first north step, which corresponds to the root of the corresponding tree, and label the lattice points of the resulting path $\bar D$ (except the last one) by their horizontal distance to the Ribbon shape determined by $s$. That is, a lattice point~$p$ is labeled by the maximal number of east steps that can be added to $p$ without crossing the Ribbon shape.\footnote{We are adding an extra ``imaginary" box on top of the top-right box of the Ribbon shape to make this definition work for points on top of the grid.}  
For the example in Figure~\ref{fig:examplespathdecomposition} we get the sequence
\[\mathbf{\color{red}2},5,4,3,2,5,4,3,2,\mathbf{\color{red}1},\mathbf{\color{red}0},1,5,4,3,2,1,0.\] 
We cut the path $\bar D$ at the first appearances of $s(1)-1, s(1)-2,\dots, 1, 0$ (marked in red in the sequence and highlighted in Figure~\ref{fig:examplespathdecomposition}).  
The resulting sequence $(D_1,D_2,\cdots,D_{s(1)})$ of Dyck paths (possibly equal to the 
identity Dyck path $E$) form the Catalan decomposition of $D$, where the signature of $D_i$ is defined as the sub-sequence of $s$ determined by the rows of the north steps in $D_i$. Their composition is defined as  
\[
[D_1,D_2,\dots,D_k]:=ND_1D_2\cdots D_k.
\]
The validity of this decomposition follows from the correspondence between $s$-trees and $s$-Dyck paths, and the area labeling in Section~\ref{sec:trees-DyckPaths}.

\begin{figure}
\centering
\begin{tikzpicture}[scale=0.3]
\begin{scope}[xshift=4cm,yshift=12cm]
\edef \signature{3,4,4,2,5}

\def\row{0}
\def\col{0}

\foreach \part in \signature{
\pgfmathsetmacro \newcol {\col+\part-1}
\pgfmathparse{\col+\part-1}
\global\let\newcol\pgfmathresult
\pgfmathparse{\row+1}
\global\let\newrow\pgfmathresult
\draw[fill, color=gray!10] (\col,\row) rectangle (\newcol+1,\newrow);
\global\let\row\newrow
\global\let\col\newcol
}
\draw[very thin] (0, 0) grid (\col+1, \row);

\def\rowpa{0}
\def\colpa{0}
\edef \pa{0,3,5,0,6}
\foreach \var in \pa{
\pgfmathparse{\colpa+\var}
\global\let\newcolpa\pgfmathresult
\pgfmathparse{\rowpa+1}
\global\let\newrowpa\pgfmathresult
\draw [line width=3, color=red]
(\colpa, \rowpa)--(\colpa,\newrowpa)--(\newcolpa,\newrowpa);

\global\let\rowpa\newrowpa
\global\let\colpa\newcolpa
}
\tikzstyle{every node}=[scale=0.7]
\draw(7,-2.2) node {$s=(3,4,4,2,5)$};
\node[blue,draw,circle,inner sep=0] at (0.5,1.5) {$2$};
\node[blue] at (0.5,2.5) {$5$};
\node[blue] at (1.5,2.5) {$4$};
\node[blue] at (2.5,2.5) {$3$};
\node[blue] at (3.5,2.5) {$2$};
\node[blue] at (3.5,3.5) {$5$};
\node[blue] at (4.5,3.5) {$4$};
\node[blue] at (5.5,3.5) {$3$};
\node[blue] at (6.5,3.5) {$2$};
\node[blue,draw,circle,inner sep=0] at (7.5,3.5) {$1$};
\node[blue,draw,circle,inner sep=0] at (8.5,3.5) {$0$};
\node[blue] at (8.5,4.5) {$1$};
\node[blue] at (8.5,5.5) {$5$};
\node[blue] at (9.5,5.5) {$4$};
\node[blue] at (10.5,5.5) {$3$};
\node[blue] at (11.5,5.5) {$2$};
\node[blue] at (12.5,5.5) {$1$};
\node[blue] at (13.5,5.5) {$0$};

\end{scope}

\begin{scope}[xshift=0cm,yshift=0cm]
\edef \signature{4,4}

\def\row{0}
\def\col{0}

\foreach \part in \signature{
\pgfmathsetmacro \newcol {\col+\part-1}
\pgfmathparse{\col+\part-1}
\global\let\newcol\pgfmathresult
\pgfmathparse{\row+1}
\global\let\newrow\pgfmathresult
\draw[fill, color=gray!10] (\col,\row) rectangle (\newcol+1,\newrow);
\global\let\row\newrow
\global\let\col\newcol
}
\draw[very thin] (0, 0) grid (\col+1, \row);

\def\rowpa{0}
\def\colpa{0}
\edef \pa{3,4}
\foreach \var in \pa{
\pgfmathparse{\colpa+\var}
\global\let\newcolpa\pgfmathresult
\pgfmathparse{\rowpa+1}
\global\let\newrowpa\pgfmathresult
\draw [line width=3, color=red]
(\colpa, \rowpa)--(\colpa,\newrowpa)--(\newcolpa,\newrowpa);

\global\let\rowpa\newrowpa
\global\let\colpa\newcolpa
}
\tikzstyle{every node}=[scale=0.7]

\draw(3.5,-2) node {$s=(4,4)$};
\end{scope}
\begin{scope}[xshift=11cm]
\draw[line width=3,red] (-0.5,0)--(0.5,0);
\draw (-0.5,0)--(0.5,0);

\tikzstyle{every node}=[scale=0.7]

\draw(-0.2,-2) node {$s=\emptyset$};
\end{scope}
\begin{scope}[xshift=15cm]
\edef \signature{2,5}

\def\row{0}
\def\col{0}

\foreach \part in \signature{
\pgfmathsetmacro \newcol {\col+\part-1}
\pgfmathparse{\col+\part-1}
\global\let\newcol\pgfmathresult
\pgfmathparse{\row+1}
\global\let\newrow\pgfmathresult
\draw[fill, color=gray!10] (\col,\row) rectangle (\newcol+1,\newrow);
\global\let\row\newrow
\global\let\col\newcol
}
\draw[very thin] (0, 0) grid (\col+1, \row);

\def\rowpa{0}
\def\colpa{0}
\edef \pa{0,6}
\foreach \var in \pa{
\pgfmathparse{\colpa+\var}
\global\let\newcolpa\pgfmathresult
\pgfmathparse{\rowpa+1}
\global\let\newrowpa\pgfmathresult
\draw [line width=3, color=red]
(\colpa, \rowpa)--(\colpa,\newrowpa)--(\newcolpa,\newrowpa);

\global\let\rowpa\newrowpa
\global\let\colpa\newcolpa
}
\tikzstyle{every node}=[scale=0.7]

\draw(3,-2) node {$s=(2,5)$};
\end{scope}
\draw[line width=1, ->](11,7)--(4,3);
\draw[line width=1,->](11,7)--(11,3);
\draw[line width=1,->](11,7)--(18,3);
\end{tikzpicture}
\caption{Example of the Catalan decomposition of an $s$-Dyck path}
\label{fig:examplespathdecomposition}

\end{figure}
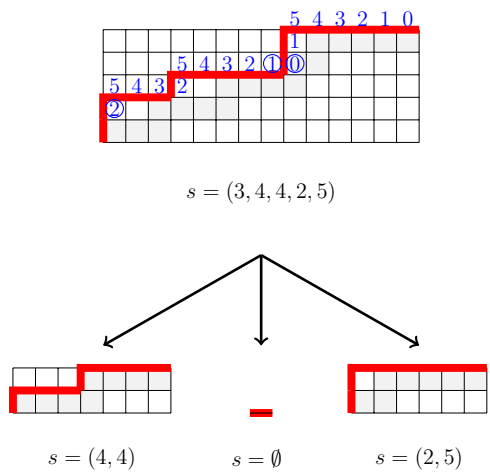

\subsection{Angulations of a polygon}\label{section:angulationofpolygon}

For $n \in \NN$ we denote by $P(n+2)$ a convex polygon with $n+2$ vertices. A \emph{diagonal} of 
$P(n+2)$ is a straight line joining any two nonadjacent vertices of the polygon. An 
\emph{angulation} of 
$P(n+2)$ is a partition of the interior of $P(n+2)$ into smaller convex polygons using a set of 
noncrossing diagonals. See Figure \ref{fig:examplerecursionsangulations} for an example of an 
angulation of $P(15)$ into $5$ parts.

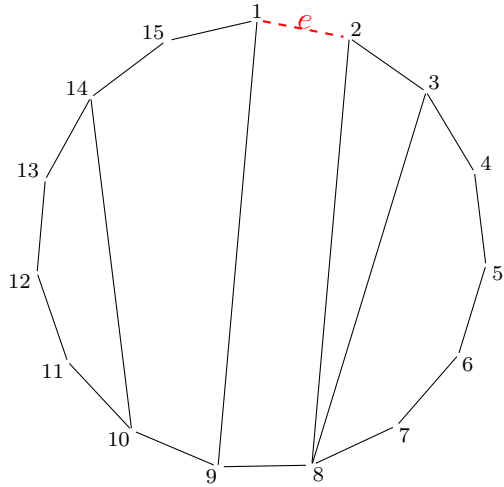
\begin{figure}
\centering
\begin{tikzpicture}[scale=1]
 
   \tikzstyle{every node}=[inner sep=0pt, minimum width=4pt]
   \def\n{15}
   
   \foreach \x in {1,2,...,\n}
   \draw {(90-360/\n*\x+360/\n+1:3cm) node (n\x)[label={90-360/\n*\x+360/\n+1:\tiny \x}]{}};

   
\draw[dashed, thick,red] (n1) -- (n2)node [midway, above, sloped,red] (TextNode) {$e$};
\draw (n2) -- (n3);
\draw (n3) -- (n4);
\draw (n4) -- (n5);
\draw (n5) -- (n6);
\draw (n6) -- (n7);
\draw (n7) -- (n8);
\draw (n8) -- (n9);
\draw (n9) -- (n10);
\draw (n10) -- (n11);
\draw (n11) -- (n12);
\draw (n12) -- (n13);
\draw (n13) -- (n14);
\draw (n14) -- (n15);
\draw (n15) -- (n1);
\draw (n1) -- (n9);
\draw (n2) -- (n8);
\draw (n3) -- (n8);
\draw (n10) -- (n14);
\end{tikzpicture}
\caption{An angulation of $P(15)$}
\label{fig:examplerecursionsangulations}
\end{figure}

Label all the vertices of $P(n+2)$ clockwise let $e$ be the edge between the vertices $1$ and 
$2$ and let $A$ be an angulation of $P(n+2)$. It is enough to remove $e$ to reveal the Catalan 
decomposition of $A$ into angulations of smaller polygons. An edge is considered as the polygon 
$P(2)$ with a unique triangulation. Then the set $\AP$ of angulations of polygons has the Catalan 
recursive structure. It is then a simple task to define the signature of an angulation $A$. Indeed, 
if removing $e$ gives angulations $A_1,A_2,\dots,A_k$ traveling counter-clockwise then
$$\signat(A):=(k)\oplus\signat(A_1)\oplus\signat(A_2)\oplus\cdots\oplus\signat(A_{k}),$$
where the signature of a single edge is $\signat(\text{\rule{1cm}{0.15cm}})=\emptyset$.
We denote by $\AP_s$ the set of $s$-angulations of $P(|s|-\ell(s)+2)$, that is the set of 
angulations $A$ with $\signat(A)=s$.

Using this recursive construction we can easily find a bijection between the $s$-trees and 
$s$-angulations of $P(|s|-\ell(s)+2)$ using the strategy described above. We leave the proof to the interested 
reader, but illustrate in Figure \ref{fig:examplebijectiontreesangulations} an example of this 
bijection when $s=(3,4,4,2,5)$ and the tree $T$ is the one in our example of Figure 
\ref{fig:examplestree}.

\begin{figure}
\centering
    \begin{tikzpicture}[scale=1]
     
       \tikzstyle{every node}=[inner sep=0pt, minimum width=4pt]
       \def\n{15}
       
       \foreach \x in {1,2,...,\n}
       \draw {(90-360/\n*\x+360/\n+1:3cm) node (n\x)[label={90-360/\n*\x+360/\n+1:\tiny \x}]{}};
    
       
    \draw (n1) -- (n2);
    \draw (n2) -- (n3);
    \draw (n3) -- (n4);
    \draw (n4) -- (n5);
    \draw (n5) -- (n6);
    \draw (n6) -- (n7);
    \draw (n7) -- (n8);
    \draw (n8) -- (n9);
    \draw (n9) -- (n10);
    \draw (n10) -- (n11);
    \draw (n11) -- (n12);
    \draw (n12) -- (n13);
    \draw (n13) -- (n14);
    \draw (n14) -- (n15);
    \draw (n15) -- (n1);
    \draw (n1) -- (n9);
    \draw (n2) -- (n8);
    \draw (n3) -- (n8);
    \draw (n10) -- (n14);
    
    \tikzstyle{every node}=[color=red]
    
    \node  at (0.6,3.4) {root};
    \draw[color=red,  very  thick] (0.6,3.2)--(0.4,1);
    \draw[color=red, dashed, very thick] (0.4,1)--(1.4,0.8);
    \draw[color=red, dashed, very thick] (0.4,1)--(0,-3.3);
    \draw[color=red, dashed, very thick] (0.4,1)--(-1,1.1);
    \draw[color=red, dashed, very thick] (1.4,0.8)--(1.8,2.8);
    \draw[color=red, dashed, very thick] (1.4,0.8)--(2,0);
    \draw[color=red, dashed, very thick] (2,0)--(2.7,2);
    \draw[color=red, dashed, very thick] (2,0)--(3.1,0.5);
    \draw[color=red, dashed, very thick] (2,0)--(3.1,-1);
    \draw[color=red, dashed, very thick] (2,0)--(2.3,-2.2);
    \draw[color=red, dashed, very thick] (2,0)--(1.2,-3);
    \draw[color=red, dashed, very thick] (-1,1.1)--(-0.8,3.1);
    \draw[color=red, dashed, very thick] (-1,1.1)--(-2,2.7);
    \draw[color=red, dashed, very thick] (-1,1.1)--(-1,-3);
    \draw[color=red, dashed, very thick] (-1,1.1)--(-2.3,-0.4);
    \draw[color=red, dashed, very thick] (-2.3,-0.4)--(-3.1,0.5);
    \draw[color=red, dashed, very thick] (-2.3,-0.4)--(-3.1,-1);
    \draw[color=red, dashed, very thick] (-2.3,-0.4)--(-2.4,-2.2);
    \draw[color=red, dashed, very thick] (-2.3,-0.4)--(-2.8,1.7);

    \end{tikzpicture}
\caption{Example of the bijection between $s$-trees and $s$-angulations}
\label{fig:examplebijectiontreesangulations}
\end{figure}
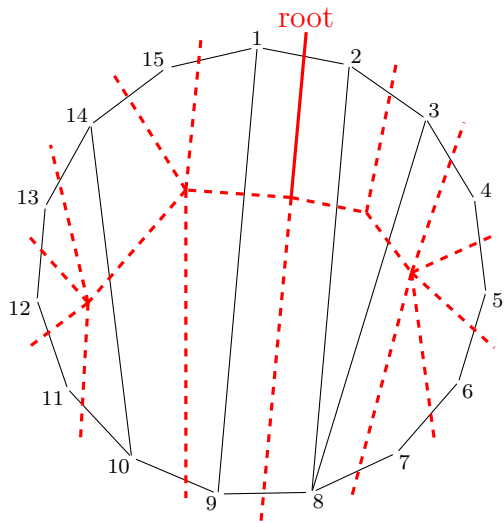

\subsection{Parenthesizations}
\label{sec_parenthesizations}
Starting with a word $w$ with $b$ letters consider the word $w^{\prime}$ of length $2a+b$ 
obtained by inserting $a$ left parenthesis and $a$ right parenthesis. For example let $a=3$ and 
$b=5$, so for the word $w=\ast\ast\ast\ast\ast$ we can insert left and right parenthesis in any 
way, 
say $w^{\prime}_1=)\ast((\ast)\ast(\ast\ast)$ or $w^{\prime}_2=(\ast((\ast\ast)\ast)\ast)$.
A \emph{(proper) parenthesization} of a word $w$ with $b$ letters is the insertion of the same 
number $a$ of left and right parenthesis with the conditions that 
 for any $i \in [2a+b]$ the prefix $w(1)w(2)\cdots w(i)$ contains at least as many left 
parenthesis as right parenthesis and that there cannot be an $i$ such that $w(i)w(i+1)=()$.

The word $w^{\prime}_2$ above is an example of a good parenthesization, another example is the word
\begin{align}\label{example:parenthesization}
w=(\ast\ast(\ast\ast\ast\ast)\ast)\ast((\ast\ast\ast\ast\ast)\ast). 
\end{align}

A \emph{segment} $u$ of a word $w=w(1)w(2)\cdots w(k)$ is a 
subword  of $w$ of the form $u=w(i+1)w(i+2)\cdots w(i+\ell)$, i.e., 
all the letters of $u$ are adjacent in $w$. A \emph{block} in a 
properly parenthesized word $w$ is a properly parenthesized segment $u$ that is inclusion maximal with respect to the property that any proper prefix in $u$ of the form $u(1)u(2)\cdots u(r)$ with $r 
<\ell$ contains strictly more left parenthesis than right parenthesis.
For the example (\ref{example:parenthesization}) there are three blocks, namely 
$(\ast\ast(\ast\ast\ast\ast)\ast)$, $\ast$ and $((\ast\ast\ast\ast\ast)\ast)$.

It is easy to see out of the definition above that every properly parenthesized word factors into 
blocks in the form $w=B_1B_2\cdots B_k$ (see example (\ref{example:parenthesization})) and that 
there is a unique parenthesization of the word $\ast$ with $b=1$ and $a=0$. So parenthesizations 
satisfy the recursion (\ref{equation:fundamentalrecursion}) and are then Catalan objects.
Define $\signat(\ast)=\emptyset$ and for the parenthesized word $w=B_1B_2\cdots B_k$ define its 
\emph{signature} $\signat(w)$ recursively as 
$$\signat(w)=(k)\oplus\signat(B_1)\oplus\cdots\oplus\signat(B_k).$$
For $w$ of example (\ref{example:parenthesization}) we have that $\signat(w)=(3,4,4,2,5)$. For 
$s \in \comp$, an \emph{$s$-parenthesization} is a properly parenthesized word $w$ such that 
$\signat(w)=s$. We denote $\PT_s$ the set of $s$-Parenthesizations of a word of length 
$|s|-\ell(s)+1$.

As in the case of $s$-angulations, $s$-trees can be recursively mapped bijectively to 
$s$-parenthesizations using their recursive constructions. Again, we leave the proof to the interested
reader and illustrate the bijection in Figure \ref{fig:exampletreesparenthesization} when 
$s=(3,4,4,2,5)$ and the tree $T$ is the one in our running 
example of Figure \ref{fig:examplestree}. Note from the Figure that at the end of the process we 
always omit the outermost parenthesis since it is redundant.

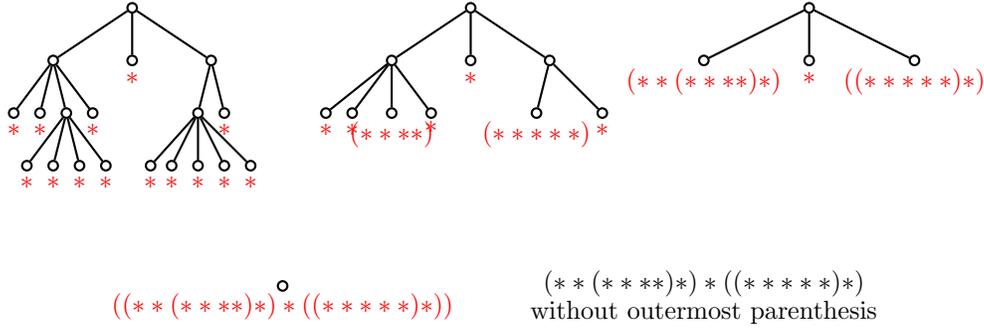
\begin{figure}
\centering
\begin{tikzpicture}[]
\begin{scope}[xshift=2cm,yshift=-3cm,thick,scale=0.35]
\tikzstyle{every node}=[ inner sep=0.5pt, minimum width=4pt,font=\small,scale=0.9]
\node[circle,draw] (a1) at (2,4)[pin={[rectangle,color=red,pin distance=0pt]270:$((\ast\ast(\ast\ast\ast\ast)\ast)\ast((\ast\ast\ast\ast\ast)\ast))$}]{};

\node (a1) at (18,3)[pin={[rectangle,color=black,pin distance=0pt]90:$(\ast\ast(\ast\ast\ast\ast)\ast)\ast((\ast\ast\ast\ast\ast)\ast)$}]{without outermost parenthesis};

\end{scope}

\begin{scope}[xshift=9cm,yshift=0cm,thick,scale=0.35]

\tikzstyle{every node}=[circle,draw,
                        inner sep=0.5pt, minimum width=4pt,font=\small,scale=0.9]
\node (a1) at (2,6){};
\node (b1) at (-2,4)[pin={[rectangle,color=red,pin distance=0pt]270:$(\ast\ast(\ast\ast\ast\ast)\ast)$}]{};
\node (b2) at (2,4)[pin={[color=red,pin distance=0pt]270:$\ast$}]{};
\node (b3) at (6,4)[pin={[rectangle,color=red,pin distance=0pt]270:$((\ast\ast\ast\ast\ast)\ast)$}]{};

\draw (a1)--(b1);
\draw (a1)--(b2);
\draw	 (a1)--(b3);

\end{scope}
\begin{scope}[xshift=4.5cm,yshift=0cm,thick,scale=0.35]

\tikzstyle{every node}=[circle,draw,
                        inner sep=0.5pt, minimum width=4pt,font=\small,scale=0.9]
\node (a1) at (2,6){};
\node (b1) at (-1,4){};
\node (b2) at (2,4)[pin={[color=red,pin distance=0pt]270:$\ast$}]{};
\node (b3) at (5,4){};

\node (c1) at (-3.5,2) [pin={[color=red,pin distance=0pt]270:$\ast$}]{};
\node (c2)  at (-2.5,2)[pin={[color=red,pin distance=0pt]270:$\ast$}]{};
\node (c3)  at (-1,2)[pin={[rectangle,color=red,pin distance=0pt]270:$(\ast\ast\ast\ast)$}]{};
\node (c4) at (0.5,2) [pin={[rectangle,color=red,pin distance=0pt]270:$\ast$}]{};
\node[circle] (d1)  at  (4.5,2)[pin={[rectangle,color=red,pin distance=0pt]270:$(\ast\ast\ast\ast\ast)$}]{};
\node[circle] (d2) at (7,2) [pin={[color=red,pin distance=0pt]270:$\ast$}] {};

\draw (a1)--(b1);
\draw (a1)--(b2);
\draw	 (a1)--(b3);
\draw (b1)--(c1);
\draw (b1)--(c2);
\draw (b1)--(c3);
\draw (b1)--(c4);
\draw (b3)--(d1);
\draw (b3)--(d2);

\end{scope}
\begin{scope}[xshift=0cm,yshift=0cm,thick,scale=0.35]

\tikzstyle{every node}=[circle, draw,
                        inner sep=0.5pt, minimum width=4pt,font=\small,scale=0.9]
\node (a1) at (2,6){};
\node (b1) at (-1,4){};
\node (b2) at (2,4)[pin={[color=red,pin distance=0pt]270:$\ast$}]{};
\node (b3) at (5,4){};

\node (c1) at (-2.5,2) [pin={[color=red,pin distance=0pt]270:$\ast$}]{};
\node (c2)  at (-1.5,2)[pin={[color=red,pin distance=0pt]270:$\ast$}]{};
\node (c3)  at (-0.5,2){};
\node (c4) at (0.5,2)[pin={[color=red,pin distance=0pt]270:$\ast$}]{};
\node (d1)  at  (4.5,2){};
\node (d2) at (5.5,2) [pin={[color=red,pin distance=0pt]270:$\ast$}] {};

\node (f1) at (-2,0)[pin={[color=red,pin distance=0pt]270:$\ast$}]{};
\node (f2)  at (-1,0)[pin={[color=red,pin distance=0pt]270:$\ast$}]{};
\node (f3)  at (0,0)[pin={[color=red,pin distance=0pt]270:$\ast$}]{};
\node (f4) at (1,0)[pin={[color=red,pin distance=0pt]270:$\ast$}]{};
    
\node (e1) at (2.7,0)[pin={[color=red,pin distance=0pt]270:$\ast$}]{};
\node (e2)  at (3.5,0)[pin={[color=red,pin distance=0pt]270:$\ast$}]{};
\node (e3)  at (4.5,0)[pin={[color=red,pin distance=0pt]270:$\ast$}]{};
\node (e4) at (5.5,0)[pin={[color=red,pin distance=0pt]270:$\ast$}]{};
\node (e5) at (6.5,0)[pin={[color=red,pin distance=0pt]270:$\ast$}]{};

\draw (a1)--(b1);
\draw (a1)--(b2);
\draw	 (a1)--(b3);
\draw (b1)--(c1);
\draw (b1)--(c2);
\draw (b1)--(c3);
\draw (b1)--(c4);
\draw (b3)--(d1);
\draw (b3)--(d2);
\draw (c3)--(f1);
\draw (c3)--(f2);
\draw (c3)--(f3);
\draw (c3)--(f4);
\draw (d1)--(e1);
\draw (d1)--(e2);
\draw (d1)--(e3);
\draw (d1)--(e4);
\draw (d1)--(e5);

\end{scope}
\end{tikzpicture}
\caption{Example of the bijection between $s$-trees and $s$-parenthesizations}
\label{fig:exampletreesparenthesization}
\end{figure}

\section{Enumeration}
\label{sec_enumeration}
Let $\lambda=(\lambda_1\geq \lambda_2 \geq \dots \geq \lambda_\ell)$ be a partition. We say that a 
partition $\lambda'=(\lambda_1'\geq \lambda_2' \geq \dots \geq \lambda_\ell')$ \emph{fits} 
$\lambda$ 
if the tableau of $\lambda'$ is contained in the tableau of $\lambda$, in other words, if $\lambda_i' \leq \lambda_i$ for all $i$. 

\begin{theorem}[Kreweras~{\cite{Kreweras1965}}]
\label{thm:kreweras}
The number $P(\lambda)$ of partitions fitting a partition $\lambda$ is counted by the determinant 
formula 
\[
P(\lambda) = \det {\lambda_j+1 \choose j-i+1}.
\]
\end{theorem}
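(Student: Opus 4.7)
The plan is to apply the Lindström-Gessel-Viennot (LGV) lemma, realizing partitions fitting inside $\lambda$ as non-intersecting families of lattice paths with specific sources and sinks. Writing $\ell = \ell(\lambda)$, I take the starting points $A_i = (i-1,\, \ell - i)$ and the endpoints $B_i = (i,\, \lambda_i + \ell - i)$ in $\ZZ^2$ for $i = 1, \dots, \ell$, and work with monotone lattice paths using unit east and north steps.

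The first step is to exhibit the bijection with partitions. A single path $P_i$ from $A_i$ to $B_i$ uses exactly one east step and $\lambda_i$ north steps; if the east step occurs at height $\ell - i + \lambda'_i$, then $\lambda'_i$ ranges over $\{0, 1, \dots, \lambda_i\}$. The only column on which two consecutive paths $P_i$ and $P_{i+1}$ can possibly interact is $x = i$: there $P_i$ covers the vertical segment from $\ell - i + \lambda'_i$ up to $\ell - i + \lambda_i$, while $P_{i+1}$ covers $[\ell - i - 1,\; \ell - i - 1 + \lambda'_{i+1}]$. These intervals are disjoint exactly when $\lambda'_{i+1} \leq \lambda'_i$, and non-consecutive paths occupy disjoint columns of the plane; so the family $(P_1, \dots, P_\ell)$ is non-intersecting precisely when $(\lambda'_1, \dots, \lambda'_\ell)$ is a partition fitting inside $\lambda$.

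Next, I count the unconstrained lattice paths from $A_i$ to $B_j$. Such a path uses $j - i + 1$ east steps and $\lambda_j + i - j$ north steps, so their number is
$$\binom{(j - i + 1) + (\lambda_j + i - j)}{j - i + 1} = \binom{\lambda_j + 1}{j - i + 1},$$
matching the matrix entry in Kreweras's formula (with the standard convention $\binom{n}{k} = 0$ when $k < 0$ or $k > n$). Finally, both sequences $(A_i)$ and $(B_i)$ are arranged strictly northwest-to-southeast as $i$ grows (the strictness for $(B_i)$ uses $\lambda_i \geq \lambda_{i+1}$), so for any non-identity $\sigma \in \mathfrak{S}_\ell$ every family of paths $A_i \to B_{\sigma(i)}$ must contain a crossing pair. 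The standard sign-reversing involution (swap tails at the first crossing) cancels all such contributions from the signed sum, so only the identity permutation survives and the theorem follows from the LGV lemma.

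The main point requiring care is verifying the non-intersection criterion on column $x = i$, together with confirming that $(A_i)$ and $(B_i)$ are in a geometric configuration for which LGV produces the plain (unsigned) count via the determinant; everything else is a direct application of the formula $\binom{m+n}{m}$ for monotone lattice paths.
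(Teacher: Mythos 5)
Your proof is correct. Note first that the paper itself offers no proof of this statement: it is quoted as a known result with a citation to Kreweras (1965), whose original argument was not via lattice paths. What you give is a complete, self-contained derivation by the Lindstr\"om--Gessel--Viennot lemma, and all the delicate points check out. The path count from $A_i=(i-1,\ell-i)$ to $B_j=(j,\lambda_j+\ell-j)$ uses $j-i+1$ east and $\lambda_j+i-j$ north steps, giving exactly the matrix entry $\binom{\lambda_j+1}{j-i+1}$; the analysis of column $x=i$ correctly shows that $P_i$ occupies $[\ell-i+\lambda_i',\,\ell-i+\lambda_i]$ and $P_{i+1}$ occupies $[\ell-i-1,\,\ell-i-1+\lambda_{i+1}']$ there, which are vertex-disjoint precisely when $\lambda_{i+1}'\le\lambda_i'$; and the monotonicity $\lambda_i\ge\lambda_{i+1}$ does guarantee that both source and sink sequences run strictly northwest-to-southeast, so only the identity permutation survives the signed sum. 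A quick sanity check on $\lambda=(2,1)$ gives $\det\left(\begin{smallmatrix}3&1\\1&2\end{smallmatrix}\right)=5$, matching the five partitions fitting inside $(2,1)$, consistent with the paper's remark that staircase shapes recover the Catalan numbers. Your approach has the added benefit of making the determinant combinatorially transparent, which the bare citation in the paper does not; the only (purely expository) point you could expand is the one you already flag, namely the standard tail-swapping involution behind the LGV cancellation.
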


In particular, for stair case partitions the numbers $P(\lambda)$ recover the classical Catalan 
numbers. 

\subsection{$s$-Catalan numbers}
Note that every composition $s=(s_1,\dots,s_a)$  defines a partition 
$\lambda^s=(\lambda^s_1 \geq \dots \geq \lambda^s_{a-1})$ such that 
$
\lambda_j^s = \sum_{i=1}^{a-j} (s_i-1),
$
 that is basically the shape of the upper shape that is left when in a $(|s|-1)\times a$ grid we 
remove the central ribbon determined by $s$. In the example of Figure \ref{fig:examplespath} we 
have that $\lambda^{(3,4,4,2,5)}=(9,8,5,2)$. It is not hard to see that in the same manner as the 
ribbon shape determined by $s$ describes a partition $\lambda^s$, every $s$-Dyck path describes a 
partition $\lambda^{\prime}\le \lambda^s$. Theorem \ref{thm:kreweras} gives then a 
determinantal formula for the number of $s$-Dyck paths.

\begin{corollary}[Kreweras]
The $s$-Catalan number is counted by the determinant formula 
\[
|\C_s| = \det {\sum_{i=1}^{a-j} (s_i-1)+1 \choose j-i+1}.
\]
\end{corollary}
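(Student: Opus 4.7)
The plan is to deduce the corollary directly from Kreweras' determinantal formula (Theorem~1) by exhibiting an explicit bijection between $s$-Dyck paths and partitions fitting inside $\lambda^s$, and then invoking Theorem~1.2 to transfer the count to all $s$-Catalan families.

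First I would make precise the claim in the paragraph preceding the corollary. Given the composition $s=(s_1,\dots,s_a)$, the partition $\lambda^s=(\lambda^s_1\geq\lambda^s_2\geq\cdots\geq \lambda^s_{a-1})$ with $\lambda^s_j=\sum_{i=1}^{a-j}(s_i-1)$ describes the Young-diagram-shaped region lying strictly above the ribbon determined by $s$ in the $(|s|-\ell(s)+1)\times \ell(s)$ grid: the $j$th row from the top of this region, measured from the top edge of the grid downward, has exactly $\lambda^s_j$ cells, because after the first $a-j$ rows of the ribbon the path of the ribbon has moved $\sum_{i=1}^{a-j}(s_i-1)$ units east. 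This is the partition whose weak decrease is forced by the weakly increasing column-lengths of the ribbon.

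Next I would check that the correspondence $D\mapsto \lambda(D)$, where $\lambda(D)$ is the partition formed by the cells of the grid that lie strictly above $D$, is a bijection between $\DP_s$ and the set of partitions $\lambda'$ that fit inside $\lambda^s$. By Definition~2.6, an $s$-Dyck path is characterized by $\mu(D)\leq_{\dom} s-\mathbf{1}$, which is exactly the condition $\sum_{i=1}^{j}(s_i-1-\mu_i)\geq 0$ for all $j$; reading this as the statement that $D$ stays weakly to the left of the ribbon shape (equivalently, weakly above it when viewed from the top edge), the cells above $D$ form a Young diagram whose $j$th row from the top has length at most $\lambda^s_j$. The inverse simply reads the profile of the region back as a lattice path. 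A short bookkeeping argument verifies this identification; I expect it to be the main technical obstacle, since the conversion between the dominance condition on $\mu(D)$ and ``fits inside $\lambda^s$'' requires being careful with the indexing (rows counted from the top versus bottom, and the use of $s-\mathbf{1}$ rather than $s$).

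Once this bijection is in place, Theorem~1 applied to $\lambda=\lambda^s$ gives
\[
|\DP_s| \;=\; P(\lambda^s) \;=\; \det\binom{\lambda^s_j+1}{j-i+1} \;=\; \det\binom{\sum_{i=1}^{a-j}(s_i-1)+1}{j-i+1}.
\]
Finally, by Theorem~1.2 all $s$-Catalan families are in bijection with $\DP_s$, so $|\C_s|=|\DP_s|$, which yields the asserted determinantal expression for $|\C_s|$. No further computation is needed; the whole argument is a packaging of Kreweras' theorem via the bijection $\T_s\leftrightarrow\DP_s$ (Theorem~3.3) and the identification of $\DP_s$ with partitions fitting $\lambda^s$.
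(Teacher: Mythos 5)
Your proposal is correct and follows essentially the same route as the paper: the paper also identifies each $s$-Dyck path with the partition of cells lying above it, observes that this is a bijection onto the partitions fitting inside $\lambda^s$, and then invokes Kreweras' determinantal formula together with the equivalence of the $s$-Catalan families. The only difference is that you spell out the indexing and the translation between the dominance condition $\mu(D)\le_{\dom}s-\mathbf{1}$ and the containment $\lambda'\le\lambda^s$, which the paper dismisses as "not hard to see."
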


\subsection{s-Narayana numbers}
The Catalan number $C_n=\frac{1}{n+1}{2n \choose n}$ can be refined by the Narayana numbers $N(n,k)$. These numbers are indexed by an extra parameter $1\leq k \leq n$, can be explicitly computed by the simple formula 
\[
N(n,k)= \frac{1}{n}{n \choose k}{n \choose k-1},
\]
and satisfy 
\[
C_n = N(n,1)+N(n,2)+\dots + N(n,n).
\]
The Narayana numbers count Catalan objects that satisfy certain property which depends on the family that they belong to. For instance, $N(n,k)$ is the number of (see \cite[Chapter 2]{Petersen2015} for further information):
\begin{enumerate}
\item planar binary trees with $n$ internal nodes and $k$ most left leaves;
\item Dyck paths in an $n\times n $ grid with $k$ peaks;
\item $312$-avoiding permutations of $[n]$ with $k-1$ ascents;
\item noncrossing partitions of $[n]$ with $k$ blocks;
\item noncrossing matchings of $[2n]$ containing $k$ matched pairs of the form $(i,i+1)$.
\end{enumerate}
All these combinatorial interpretations can be naturally extended to the generalized $s$-Catalan families. The \emph{$s$-Narayana number} is defined as the number of elements in the following families. We remark that they correspond to each other under the bijections presented in Section~\ref{sec_sCatalanzooAndBijections} and Appendix~\ref{sec_bijectionsWithDyckPaths}. 

 \begin{enumerate}
\item $s$-trees with $k$ most left leaves;
\item $s$-Dyck paths with $k$ peaks; \label{item_Dyckpaths}
\item $312$-avoiding Stirling $(s-\bf{1})$-permutations with $k-1$ ascents; 
\item noncrossing $(s-\bf{1})$-partitions with $k$ blocks;
\item Complete noncrossing $s$-matchings with $k$ parts satisfying $\operatorname{min}M_i+1\in M_i$.
\end{enumerate}

As far as we know there are no simple formulas to count these combinatorial objects in this general setting. However, refinements of the $s$-Narayana numbers have already been studied in the literature, for instance in the case of Fuss-Narayana numbers and rational Narayana numbers~\cite{ArmstrongRhoadesWilliams2013,lenczewski_limite_2014,lenczewski_FussNarayana_2013}.

\section{Signature generalizations of permutations and parking functions}

\subsection{$s$-generalized permutations}

As mentioned in the introduction, permutations of~$[n]$ are in bijection with increasingly labeled planar binary trees with $n$ internal nodes. A natural generalization of permutations in terms of signatures is the collection of permutations corresponding to $(s+\bf{1})$-increasing trees. As explained in Section \ref{sec_sincreasing_stirling}, such permutations are in correspondence with Stirling $s$-permutations, which were already considered and studied by Gessel and Stanley back in the seventies~\cite{GesselStanley1978} for the special case~$s=(2,\dots,2)$. As described above, these are multipermutations of the multiset $\{1^{s(1)},2^{s(s(2))},\dots, a^{s(a)}\}$ that avoid the pattern $212$; see~\cite{KubaPanholzer2011,JansonKubaPanholzer2011,Park1994-1,Park1994-2,Park1994-3, 
GrahamKnuthPatashnik1994,Dleon2015,RemmelWilson2015} for further studies on such permutations. 

The cardinality of the set~$\SP_s$ of Stirling $s$-permutations generalizes the factorial numbers. To be more precise, 
from a permutation in $\SP_{(s(1),s(2),\dots,s(k-1))}$ we
can obtain a permutation in $\SP_{s\oplus (s(k))}$ by inserting the $s(k)$ consecutive occurrence 
of the  label $kk\cdots k$ in any of the $s(1)+s(2)+\cdots + s(k-1)+1$ possible
positions. Inductively and starting in $\SP_{(s(1))}=\{11\cdots 1\}$ this implies that 
$$|\SP_s|=1 \cdot (s(1)+1) \cdot (s(1)+s(2)+1)\cdot \cdots \cdot 
(s(1)+s(2)+\cdots+s(a-1)+1)=:|s|!^{s},$$
and we read $|s|!^{s}$ as \emph{$|s|$ $s$-factorial}. Note that if $s=(1^n)$ then $|s|!^{s}=n!$ and 
if $s=(2^n)$ then  $|s|!^{s}=1\cdot 3 \cdots (2n-1)=(2n-1)!!$, the factorial and double-factorial 
numbers.

 In forthcoming work~\cite{ceballos_sweakorder}, the first author and Viviane Pons study a generalization of the weak order on permutations for the set of Stirling $s$-permutations. They call this generalized order the \emph{$s$-weak order} and show that it has the structure of a lattice. They also investigate geometric realizations of the $s$-weak order in terms of polytopal subdivisions of the classical permutahedron.

\subsection{$s$-generalized parking functions}
A \emph{parking function} of $n$ is a sequence of nonnegative integers $(p_1,p_2,\dots,p_n)$ with 
the property that after being rearranged in weakly increasing order $p_{j_1}\le p_{j_2}\le \cdots 
\le p_{j_n}$ they satisfy $p_{j_i}<i$ for all $i\in[n]$. We denote by $\PF_n$ the set of 
parking functions of $n$. Parking functions were studied by 
Konheim and Weiss \cite{KonheimWeiss1966} were they used the following equivalent description to 
define them: 

Let $C_1,\dots,C_n$ be a collection of cars to be parked in order, car $C_i$ 
before car $C_{i+1}$, in a one-way street in a consecutive sequence of parking spots marked 
$P_0,\dots,P_{n-1}$. The driver of car $C_i$ has a preferred spot $p_i$ where he would like to 
park. In step $i$ the driver of car $C_i$ drives up to the spot $p_i$, parking there if the spot is 
available, or otherwise continuing until the next available spot. If the spot $p_i$ 
and all the following parking spots are occupied then $C_i$ is unable to park. A 
\emph{parking function} is a sequence $(p_1,p_2,\dots,p_n)$ of parking preferences where all the 
cars are able to park.

It is not difficult to verify that the two definitions of parking function stated above are 
equivalent. There is another way to construct parking functions. We can obtain parking functions by 
decorating Dyck paths with the elements of $[n]$. 
 
A \emph{decorated Dyck path} is a Dyck path in which the north steps have been decorated 
from left to right with a permutation of $[n]$ in such a way that consecutive north steps have 
increasing labels (see Figure \ref{figure:example_decorated_Dyck_path}). Let $\DDP_n$ denote the 
set of decorated Dyck paths of $[n]$. In Figure \ref{figure:example_type_A_combinatorics} the 
reader can see the example of $\DDP_3$.

It turns out that the sets $\PF_n$ and $\DDP_n$ are in bijection and that decorated Dyck paths 
are a convenient way to encode parking functions. The bijection we will describe is common in the 
literature, see for example \cite{GarsiaHaiman1996}. The function $P:\DDP_n\rightarrow \PF_n$ that 
provides a parking function $P(D)$ starting with a decorated Dyck path $D$ is defined as follows: 
the value of $p_i$ is the number of east steps that are in $D$ to the left of the north step 
decorated with the letter $i$. The function $P$ is bijective and its inverse is also easy to 
describe. In the example of Figure \ref{figure:example_decorated_Dyck_path} we have for example 
that $p_1=0$ because there are no east steps before the north step labeled $1$ and $p_4=5$ since 
there are $5$ east step before the north step labeled $4$.

 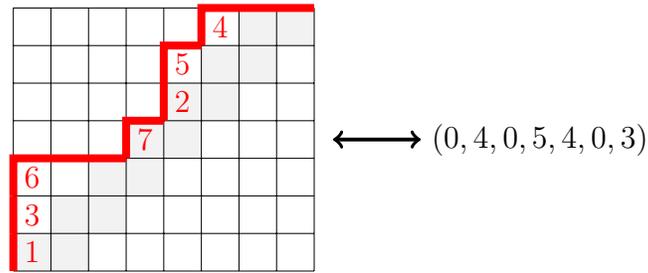
\begin{figure}
\centering
\begin{tikzpicture}[scale=0.5]
\edef \signature{2,2,2,2,2,2,2}

\def\row{0}
\def\col{0}

\foreach \part in \signature{
\pgfmathsetmacro \newcol {\col+\part-1}
\pgfmathparse{\col+\part-1}
\global\let\newcol\pgfmathresult
\pgfmathparse{\row+1}
\global\let\newrow\pgfmathresult
\draw[fill, color=gray!10] (\col,\row) rectangle (\newcol+1,\newrow);
\global\let\row\newrow
\global\let\col\newcol
}
\draw[very thin] (0, 0) grid (\col+1, \row);

\def\rowpa{0}
\def\colpa{0}
\edef \pa{0,0,3,1,0,1,3}
\foreach \var in \pa{
\pgfmathparse{\colpa+\var}
\global\let\newcolpa\pgfmathresult
\pgfmathparse{\rowpa+1}
\global\let\newrowpa\pgfmathresult
\draw [line width=3, color=red]
(\colpa, \rowpa)--(\colpa,\newrowpa)--(\newcolpa,\newrowpa);

\global\let\rowpa\newrowpa
\global\let\colpa\newcolpa
}

\node[color=red] at (0.5,0.5) {$1$};
\node[color=red] at (0.5,1.5) {$3$};
\node[color=red] at (0.5,2.5) {$6$};
\node[color=red] at (3.5,3.5) {$7$};
\node[color=red] at (4.5,4.5) {$2$};
\node[color=red] at (4.5,5.5) {$5$};
\node[color=red] at (5.5,6.5) {$4$};

\node[](pf) at (14,3.5) {$(0,4,0,5,4,0,3)$};
\draw[<->,line width=.02in] (8.5,3.5) -- (pf);

\end{tikzpicture}
\caption{Example of a decorated Dyck path and its corresponding parking function}
\label{figure:example_decorated_Dyck_path}
\end{figure}

Perhaps the easiest way to generalize the concept of parking function is using the concept of a 
decorated Dyck path. For a composition $s$ let $\DDP_s$ be the set of $s$-Dyck paths whose north 
steps have been decorated with a permutation of $[a]$ in such a way that consecutive north steps 
have increasing labels. The discussion above illustrates that the set $\DDP_s$ is also in bijection 
with a more general set of parking functions. For a weak composition $\mu$  a \emph{$\mu$-parking 
function} is a sequence of nonnegative integers $(p_1,p_2,\cdots,p_a)$ such that after being 
rearranged in weakly increasing order $p_{j_1}\le p_{j_2}\le \cdots \le p_{j_a}$ they satisfy 
$p_{j_1}=0$ and
\begin{align*}
 p_{j_i}\le\sum_{i=1}^{i-1}\mu(i)\text{ for all }i>1.
\end{align*}
We denote by $\PF_{\mu}$ the set of $\mu$-parking functions. By the same reasoning as above we have 
that there is a bijection $\DDP_s\rightarrow \PF_{s-\bf{1}}$.  $\mu$-parking 
functions have already appear in the literature with the name of generalized parking functions, 
vector parking functions or $\mu$-parking functions. They were originally studied by Pitman and Stanley~\cite{StanleyPitman2002} and Yan \cite{Yan2000,Yan2001} in a slightly more general form. This more general form can be obtained within this framework if we allow $0$ values in our signatures and what we call a $\mu$-parking function is what appears as a $(1)\oplus \mu$-vector 
parking function in the literature. The case 
$s-\bf{1}=(1^n)$ returns the classical concept of parking functions counted by $(n+1)^{n-1}$ and in 
general if $s$ is a rational signature for $(a,b)$ relatively prime then Armstrong, Loehr and 
Warrington \cite{ArmstrongLoehrWarrington2016} show that $|\PF_{a,b}|=b^{a-1}$, this includes the 
case $a=n$ and $b=kn+1$ or $s-\bf{1}=(k^n)$ of Fuss-Catalan parking functions. The rational 
signatures are basically the cases were a nice product formula is known and to find formulas for 
general signatures seems to be a difficult task. In \cite{GaydarovHopkins2016} Gaydarov and Hopkins 
generalize the work in \cite{PerkinsonYangYu2016} to provide a bijection between vector parking 
functions and a set of trees that they call rooted plane trees. 

We finish this section with the remark that even thought in the literature it is most common to 
represent parking functions as decorated Dyck paths, we argue that is more natural and probably 
more convenient from the recursive point of view to use the perspective of $s$-trees to represent a 
parking function. A \emph{decorated $s$-tree} is an $s$-tree such that all the internal nodes are 
decorated with distinct elements of $[a]$ in such a way that an internal node that is the left-most 
child of its parent has a larger label. Let $\DT_s$ be the set of decorated 
$s$-trees, then is clear from Theorem \ref{thm:bijection_trees_paths} that we have a bijection 
$\DT_s\cong \DDP_s\cong \PF_{s-\bf{1}}$. In Figure \ref{figure:example_decorated_s_tree} we 
illustrate an example of a decorated $s$-tree corresponding to the $(1^7)$-parking function 
$(0,4,0,5,4,0,3)$. Note that the associated parking function can be obtained by labeling the leaves 
left-to-right from $0$ to $|s-\mathbf{1}|+1$, then $p_i$ is equal to the label of the leaf that 
is the leftmost descendant of the internal node decorated $i$. 

 \begin{figure}
\centering
\begin{tikzpicture}[thick,scale=0.6]
\tikzstyle{every node}=[circle, draw,
                        inner sep=0.5pt, minimum width=4pt,font=\small]
\node (i1) at (0,0){\color{red}$1$};
\node (i2) at (2,-2){\color{red}$2$};
\node (i3) at (-1,-1){\color{red}$3$};
\node (i4) at (2,-4){\color{red}$4$};
\node (i5) at (1,-3){\color{red}$5$};
\node (i6) at (-2,-2){\color{red}$6$};
\node (i7) at (1,-1){\color{red}$7$};
\node (l0) at (-3,-3)[pin={[pin distance=2]270:$0$}]{};
\node (l1) at (-1,-3)[pin={[pin distance=2]270:$1$}]{};
\node (l2) at (-.2,-2)[pin={[pin distance=2]270:$2$}]{};
\node (l3) at (0.2,-2)[pin={[pin distance=2]270:$3$}]{};
\node (l4) at (0,-4)[pin={[pin distance=2]270:$4$}]{};
\node (l5) at (1,-5)[pin={[pin distance=2]270:$5$}]{};
\node (l6) at (3,-5)[pin={[pin distance=2]270:$6$}]{};
\node (l7) at (3,-3)[pin={[pin distance=2]270:$7$}]{};
\draw (i1) -- (i3);
\draw (i3) -- (i6);
\draw (i6) -- (l0);
\draw (i6) -- (l1);
\draw (i3) -- (l2);
\draw (i1) -- (i7);
\draw (i7) -- (i2);
\draw (i7) -- (l3);
\draw (i2) -- (i5);
\draw (i5) -- (i4);
\draw (i5) -- (l4);
\draw (i4) -- (l5);
\draw (i4) -- (l6);
\draw (i2) -- (l7);

\end{tikzpicture}
\caption{Example of the decorated $s$-tree corresponding to the parking function of Figure 
\ref{figure:example_decorated_Dyck_path}}
\label{figure:example_decorated_s_tree}
\end{figure}
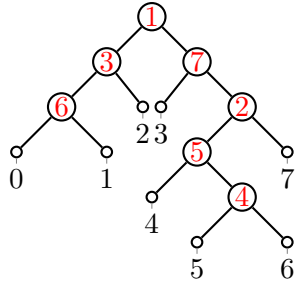

\section{Relation to the rational constructions of Armstrong, Rhoades and Williams}
The constructions in this article generalize the rational Dyck paths of Armstrong, 
Rhodes and Williams~\cite{ArmstrongRhoadesWilliams2013} to more general signatures $s$. A natural question 
that arises is whether our constructions for noncrossing $s$-partitions, complete noncrossing 
$s$-matchings and $s$-angulations of a $(|s|-\ell(s)+2)$-gon specialize to their rational 
constructions for the same type of objects in 
\cite{ArmstrongRhoadesWilliams2013} when $s$ is a rational signature. The answer to this question 
happens to be on the negative side. Even though in some examples the two constructions coincide 
they are in general different. In this section we discuss the difference between the 
constructions and in particular we illustrate the situation in the case of rational 
noncrossing partitions. 

Let $(a,b)$ be a pair of relatively prime positive numbers and $s$ the corresponding 
rational signature defined as in equation \ref{equation:definitionofrationalsignature}. In 
\cite{ArmstrongRhoadesWilliams2013}, the authors define a family of inhomogeneous $(a,b)$-noncrossing 
partitions (that we will denote here by $\widetilde \NC_{(a,b)}$), a family of homogeneous 
$(a,b)$-noncrossing partitions (denoted here by $\widetilde \CM_{(a,b)}$) and  
a family of rational dissections of a polygon (denoted here by $\widetilde \AP_{(a,b)}$).
The constructions of $\widetilde \NC_{(a,b)}$, $\widetilde \CM_{(a,b)}$ and $\widetilde 
\AP_{(a,b)}$ rely on a ``laser'' construction. A \emph{laser} through the point $(x_0,y_0)$ is the 
half-ray in the grid $b\times a$ with starting point $(x_0,y_0)$ and slope $\frac{a}{b}$, i.e., a 
half-ray given by the equation $y=y_0+\frac{a}{b}(x-x_0)$ with $x\ge x_0$. See the Figure 
\ref{fig:difference_in_rational_noncrossing_partitions} for examples of lasers that go through the 
points $(0,0)$, $(2,2)$, $(6,3)$ and $(10,4)$ with slope $\frac{5}{13}$.

We now describe the construction of $\widetilde \NC_{(a,b)}$. For a given $(a,b)$-Dyck path $D$ 
we label the right ends of the east steps of $D$ from left to right with the labels 
$1,2,\dots,(b-1)$. For every consecutive sequence of north steps in $D$ fire a laser through the 
lower point where the sequence starts. The lasers give a topological decomposition of the reqion 
between $D$ and the diagonal $y=\frac{a}{b}x$ and then we can define a partition $\pi(D)$ by 
letting $i$ and $j$ to be in the same block if they belong to the same connected component. In 
\cite[Proposition 6.1]{ArmstrongRhoadesWilliams2013} it is shown that $\pi(D)$ is a noncrossing 
partition of $[b-1]$ and that $\pi$ is an injective map, which allow them to define rational 
noncrossing partitions. In Figure \ref{fig:difference_in_rational_noncrossing_partitions}  we 
illustrate an example of this construction starting with the $(5,13)$-Dyck path (or 
$(3,4,3,4,3$-Dyck path in our language) $D=N^2E^2NE^4NE^4NE^3$. The noncrossing partition 
obtained from the construction in \cite{ArmstrongRhoadesWilliams2013} is 
$\pi(D)=\{\{1,2,5,6,9,10\},\{3,4\},\{7,8\},\{11,12\}\}$  while the noncrossing 
partition obtained from $D$ using our constructions by first identifying the $(3,4,3,4,3)$-tree 
$\zeta(D)$ associated to $D$ and then finding the noncrossing $(3,4,3,4,3)$-partition 
$\phi(\zeta(D))=\{\{1,2,5,6,10\},\{3,4\},\{7,8,9\},\{11,12\}\}$, showing  
that our constructions arrive to different objects. Note that not only the 
noncrossing partitions associated to $D$ are different but also the sets $\widetilde 
\NC_{(5,13)}$ and $\NC_{(2,3,2,3,2)}$ of rational noncrossing partitions are different. 
The partition $\pi(D)=\{\{1,2,5,6,9,20\},\{3,4\},\{7,8\},\{11,12\}\}$ has parts of sizes 
$\mu(\pi(D))=(6,2,2,2)$, so in particular, it is not refined by $s-\mathbf{1}=(2,3,2,3,2)$ and 
hence $\pi(D)\notin \NC_{(2,3,2,3,2)}$. A similar situation occurs with the families $\widetilde 
\CM_{(a,b)}$ and $\widetilde \AP_{(a,b)}$ since they are also defined by the laser construction and 
our constructions have definitions that reflect the underlying tree structure of the objects 
instead.

In \cite{ArmstrongRhoadesWilliams2013} Armstrong, 
Rhodes and Williams  pose as an open problem to give characterizations of 
the families $\widetilde \NC_{(a,b)}$, $\widetilde \CM_{(a,b)}$ and $\widetilde \AP_{(a,b)}$ that 
are somewhat more natural and do not rely in a laser construction. One advantage of our 
constructions is precisely that they are independent of the laser construction and that they happen 
to reflect the natural tree structure of $s$-Catalan families. One advantage in the laser 
constructions however is that they happen to have 
certain rotational symmetry. Indeed, it was proven in 
\cite[Proposition 5.2]{ArmstrongRhoadesWilliams2013} that $\widetilde \CM_{(a,b)}$ is closed under 
rotation and the same property is conjecturally true for $\widetilde \NC_{(a,b)}$. It is not 
difficult to check that the families $\CM_s$ and $\NC_{s-\mathbf{1}}$ are in general not closed 
under rotation unless $s=(k^a)$ for some value of $k$, since the rotation operation also have the 
effect of permuting the entries of the signature $s$. Hence if we want to have rotational symmetry 
we need to go to a larger family of objects that includes all the possible permutations of the 
signature. Finally, a clear advantage of our constructions is that we produce, as a byproduct, a 
family of $(a,b)$-trees that is central in the new rational picture and that it was not defined 
before in the literature. We claim that this central object could play an important role when 
studying families of rational Catalan objects for its inherent recursive nature. For example the 
$(a,b)$-trees also allow us to obtain the notion of Stirling $(a,b)$-permutations that were not 
considered before inside this picture.

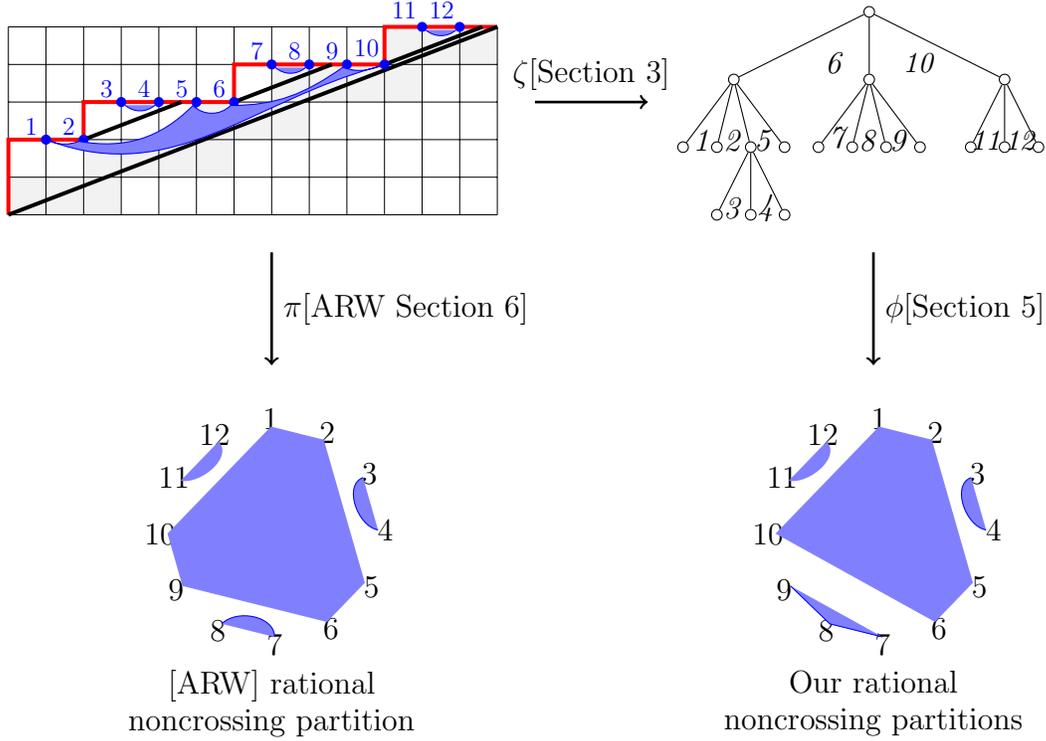
\begin{figure}[htb]
\centering
 \begin{tikzpicture}[]

\draw[line width=1,->] (3.5,-0.5) -- (3.5,-2)node [midway, right]  {$\pi$[ARW Section 6]};
\draw[line width=1,->] (11.5,-0.5) -- (11.5,-2)node [midway, right]  {$\phi$[Section 5]};
\draw[line width=1,->] (7,1.5) -- (8.5,1.5)node [midway, above]  {$\zeta$[Section 3]};
\node[align=center] at (3.5,-6.5) {[ARW] rational\\ noncrossing partition};
\node[align=center] at (11.5,-6.5) {Our rational\\ noncrossing partitions};

\begin{scope}[xshift=0,yshift=0,scale=0.5]
\edef \signature{3,4,3,4,3}

\def\row{0}
\def\col{0}

\foreach \part in \signature{
\pgfmathsetmacro \newcol {\col+\part-1}
\pgfmathparse{\col+\part-1}
\global\let\newcol\pgfmathresult
\pgfmathparse{\row+1}
\global\let\newrow\pgfmathresult
\draw[fill, color=gray!10] (\col,\row) rectangle (\newcol+1,\newrow);
\global\let\row\newrow
\global\let\col\newcol
}
\draw[very thin] (0, 0) grid (\col+1, \row);

\def\rowpa{0}
\def\colpa{0}
\edef \pa{0,2,4,4,3}
\foreach \var in \pa{
\pgfmathparse{\colpa+\var}
\global\let\newcolpa\pgfmathresult
\pgfmathparse{\rowpa+1}
\global\let\newrowpa\pgfmathresult
\draw [line width=1.5, color=red]
(\colpa, \rowpa)--(\colpa,\newrowpa)--(\newcolpa,\newrowpa);

\global\let\rowpa\newrowpa
\global\let\colpa\newcolpa
}

\draw[line width=1.5] (0,0) -- (13,5);
\draw[line width=1.5] (2,2) -- (2+13/5,3);
\draw[line width=1.5] (6,3) -- (6+13/5,4);
\draw[line width=1.5] (10,4) -- (10+13/5,5);

\tikzstyle{every node}=[blue,fill,draw,circle,inner sep=1.5,scale=0.8]
\node (n1) at (1,2)[pin={[pin edge=-,pin distance=0]145:1}] {};
\node (n2) at (2,2)[pin={[pin edge=-,pin distance=0]145:2}] {};
\node (n3) at (3,3)[pin={[pin edge=-,pin distance=0]145:3}] {};
\node (n4) at (4,3)[pin={[pin edge=-,pin distance=0]145:4}] {};
\node (n5) at (5,3)[pin={[pin edge=-,pin distance=0]145:5}] {};
\node (n6) at (6,3)[pin={[pin edge=-,pin distance=0]145:6}] {};
\node (n7) at (7,4)[pin={[pin edge=-,pin distance=0]145:7}] {};
\node (n8) at (8,4)[pin={[pin edge=-,pin distance=0]145:8}] {};
\node (n9) at (9,4)[pin={[pin edge=-,pin distance=0]145:9}] {};
\node (n10) at (10,4)[pin={[pin edge=-,pin distance=0]145:10}] {};
\node (n11) at (11,5)[pin={[pin edge=-,pin distance=0]145:11}] {};
\node (n12) at (12,5)[pin={[pin edge=-,pin distance=0]145:12}] {};

\draw [blue,fill=blue!50] (n1) to[out=-20,in=-160] (n2)to [out=-10,in=-135] (n5)  to[out=-45,in=-135] (n6) to[out=0,in=-145] (n9) to[out=-20,in=-160] (n10) to[out=-165,in=-20] (n1);
\draw [blue,fill=blue!50] (n3) to[out=-40,in=-140] (n4);
\draw [blue,fill=blue!50] (n7) to[out=-40,in=-140] (n8);
\draw [blue,fill=blue!50] (n11) to[out=-40,in=-140] (n12);

\end{scope}
\begin{scope}[xshift=100,yshift=-120,scale=0.5]
 \tikzstyle{every node}=[inner sep=0pt, minimum width=4pt]
   \def\n{12}
   
   \foreach \x in {1,2,...,12}
   \draw {(90-360/\n*\x+360/\n+1:3cm) node {\x}};
   
   \def\inicio{1} 
   \def\col{blue}
      \path[fill=\col!50] (90-360/\n*\inicio+360/\n+1:2.8cm)
      \foreach \x in {1,2,5,6,9,10}{ -- (90-360/\n*\x+360/\n+1:2.8cm)};

 \def\inicio{3} 
   \def\col{blue}
      \draw[\col,fill=\col!50] (90-360/\n*\inicio+360/\n+1:2.8cm)
      \foreach \x in {3,4}{ to[out=-160,in=-190] (90-360/\n*\x+360/\n+1:2.8cm)};
      
    \def\inicio{7} 
    \def\col{blue}
      \draw[\col,fill=\col!50] (90-360/\n*\inicio+360/\n+1:2.8cm)
      \foreach \x in {7,8}{  to[out=90,in=45](90-360/\n*\x+360/\n+1:2.8cm)};
      
      \def\inicio{11} 
    \def\col{blue}
      \path[blue,fill=\col!50] (90-360/\n*\inicio+360/\n+1:2.8cm)
      \foreach \x in {11,12}{  to[out=-10,in=-45] (90-360/\n*\x+360/\n+1:2.8cm)};
\end{scope}

\begin{scope}[xshift=330,yshift=-120,scale=0.5]
 \tikzstyle{every node}=[inner sep=0pt, minimum width=4pt]
   \def\n{12}
   
   \foreach \x in {1,2,...,12}
   \draw {(90-360/\n*\x+360/\n+1:3cm) node {\x}};
   
   \def\inicio{1} 
   \def\col{blue}
      \path[fill=\col!50] (90-360/\n*\inicio+360/\n+1:2.8cm)
      \foreach \x in {1,2,5,6,10}{ -- (90-360/\n*\x+360/\n+1:2.8cm)};

 \def\inicio{3} 
   \def\col{blue}
      \draw[\col,fill=\col!50] (90-360/\n*\inicio+360/\n+1:2.8cm)
      \foreach \x in {3,4}{ to[out=-160,in=-190] (90-360/\n*\x+360/\n+1:2.8cm)};
      
    \def\inicio{7} 
    \def\col{blue}
      \draw[\col,fill=\col!50] (90-360/\n*\inicio+360/\n+1:2.8cm)
      \foreach \x in {7,8,9}{ --(90-360/\n*\x+360/\n+1:2.8cm)};
      
      \def\inicio{11} 
    \def\col{blue}
      \path[blue,fill=\col!50] (90-360/\n*\inicio+360/\n+1:2.8cm)
      \foreach \x in {11,12}{  to[out=-10,in=-45] (90-360/\n*\x+360/\n+1:2.8cm)};
\end{scope}

\begin{scope}[xshift=300,yshift=0,scale=0.45]
    \tikzstyle{every node}=[inner sep=0pt, minimum width=4pt]

    \draw (-2.9,2.2) node []{\emph{1}};
    \draw (-2,2.2) node []{\emph{2}};
    \draw (-2,0.2) node []{\emph{3}};
    \draw (-1,0.2) node []{\emph{4}};
    \draw (-1.1,2.2) node []{\emph{5}};
    \draw (1,4.5) node []{\emph{6}};
    \draw (1.1,2.2) node []{\emph{7}};
    \draw (2,2.2) node []{\emph{8}}; 
    \draw (2.9,2.2) node []{\emph{9}};
    \draw (3.5,4.5) node []{\emph{\small 10}};
    \draw (5.5,2.2) node []{\emph{\small 11}};
    \draw (6.5,2.2) node []{\emph{\small 12}};

 \tikzstyle{every node}=[circle, draw,
                        inner sep=0.5pt, minimum width=4pt,font=\small]
\node (a1) at (2,6){};
\node (b1) at (-2,4){};
\node (b2) at (2,4){};
\node (b3) at (6,4){};

\node (c1) at (-3.5,2) {};
\node (c2)  at (-2.5,2){};
\node (c3)  at (-1.5,2){};
\node (c4) at (-0.5,2){};
\node (d1)  at  (5,2){};
\node (d2) at (6,2) {};
\node (d3) at (7,2) {};

\node (f1) at (-2.5,0){};
\node (f2)  at (-1.5,0){};
\node (f3)  at (-0.5,0){};

\node (e1) at (0.5,2){};
\node (e2)  at (1.5,2){};
\node (e3)  at (2.5,2){};
\node (e4) at (3.5,2){};

\draw (a1)--(b1);
\draw (a1)--(b2);
\draw(a1)--(b3);
\draw (b1)--(c1);
\draw (b1)--(c2);
\draw (b1)--(c3);
\draw (b1)--(c4);
\draw (b3)--(d1);
\draw (b3)--(d2);
\draw (b3)--(d3);
\draw (c3)--(f1);
\draw (c3)--(f2);
\draw (c3)--(f3);
\draw (b2)--(e1);
\draw (b2)--(e2);
\draw (b2)--(e3);
\draw (b2)--(e4);

\end{scope}

\end{tikzpicture}
\caption{Difference between $\widetilde \NC_{(a,b)}$ and $\NC_s$ for a rational $s$.}
\label{fig:difference_in_rational_noncrossing_partitions}
\end{figure}

\section{Algebraic structures on signature objects}
In a forthcoming article we study various algebraic structures that are related to the set 
$\T=\bigcup_{s\in \comp}\T_s$ of planar rooted trees and such that after applying the 
corresponding algebraic operations some properties of signatures are preserved.

The structure that is most notably related to $\T$ is the one that models the category of 
\emph{nonsymmetric operads}, see  \cite{LodayVallette2012} for the context and 
notation. Let $\NN\text{-Mod}$ be the category of $\NN$-graded (or commonly known as arity graded) 
vector spaces and graded preserving linear maps over some field $\kk$. Over the monoidal 
category of endofunctors $F:\NN\text{-Mod}\rightarrow \NN\text{-Mod}$ with composition of 
endofunctors and whose identity $\II$ is the identity endofunctor we have a \emph{monad} 
$\T:\NN\text{-Mod}\rightarrow \NN\text{-Mod}$ (we are abusing notation using $\T$ here also but 
hope the reader will find every meaning clear from the context) such that for an $\NN$-module 
$M=\bigoplus_{n\ge 1}M_n$
we have that 

\begin{align*}\label{equation:compositiontrees}
\T(M)=\bigoplus_{n\ge 1}\bigoplus_{T\in \T_n}\bigotimes_{v\in 
\intvert(T)}M_{\indeg(v)}, 
\end{align*}
where $\T_n$ is the set of planar rooted trees with $n$ leaves, for $T\in \T_n$ we have that $\intvert(T)$ is 
the set of internal vertices of $T$ and for $v\in \intvert(T)$ we have that $\deg(v)$ is the number of children 
of $v$. The rule of composition $\gamma:\T\circ\T \rightarrow \T$ is given by \emph{composition of 
trees}, i.e., if $T\in \T_k$ and $T_i\in \T_{m_i}$ for $i\in [k]$ 
then $$\gamma(T;T_1,T_2,\dots,T_k)$$ is the tree in $\T_{\sum_i m_i}$ constructed by attaching the 
root of $T_i$ to the $i$-th leaf of $T$ from left to right. The identity $\eta:\II\rightarrow \T$ 
is the natural transformation that maps $\II(M)=M$ to  $\eta(\II(M))=M$ that can be seen as having 
the same form of equation \label{equation:compositiontrees} but for each value of $n$ the only tree 
in $\T_n$ to be considered is the \emph{corolla} with arity $n$, that is the, unique planar rooted 
tree with signature $(n)$. Algebras over the monad $\T$ are known as \emph{nonsymmetric operads} 
and for any $M\in\NN\text{-Mod}$ we have that $\T(M)$ is the \emph{free nonsymmetric operad} 
generated by $M$. In particular when $M=\bigoplus_{n\ge 1} \kk$ then we obtain a free nonsymmetric 
operad structure on planar rooted trees, see \cite{LodayVallette2012}.

Note that if $\signat(T)=s$ and $\signat(T_i)=s_i$ for $i\in [k]$ then it is not possible to 
determine exactly $\signat(\gamma(T;T_1,T_2,\dots,T_k)$ since it will depend on the tree structure 
of $T$. However, since as multisets the components of the signatures are preserved we 
can define a multiset grading on $\T$. Let $[s]$ indicate the underlying multiset associated to a 
composition $s$. Then for a multiset $S$ we have define 

$$T_S=\{T\in \T\,\mid\, [\signat(T)]\in S\}.$$

We then have that  $$\T=\bigcup_{S \text{ a multisubset of }\NN} T_S,$$
with the property that if $T \in T_S$ and $T_i\in T_{S_i}$ for $i\in [k]$ then 
$$\gamma(T;T_1,T_2,\dots,T_k) \in T_{S\cup \bigcup_i S_i}.$$

There is another operadic structure that can be associated to the set $\T$ where composition has a 
more predictable behavior on signatures. Following a similar idea than in the work of Lopez, 
Preville-Ratelle and Ronco in \cite{LopezPrevilleRatelleRonco2015} on composition on what they 
call $m$-Dyck paths, we can define a composition of $s$-Dyck paths or their corresponding $s$-trees. 
In \cite{LopezPrevilleRatelleRonco2015} an $m$-Dyck path is defined as a lattice path from $(0,0)$ 
to $(nm,nm)$, taking only north steps $(m,0)$ and east steps $(0,1)$ and such that $m$ times the 
number of north steps taken at any given point is larger than the number of east steps taken. 
Let us denote the set of $m$-Dyck paths by $\Dyck^m$. There is an almost trivial bijection between 
$\Dyck^m$ and $\DP_{((m+1)^n)}$ by just keeping the same word in the letters $N$ and $E$ that 
describes $D\in \Dyck^m$ and adding a trailing $E$ to get a path in $\DP_{((m+1)^n)}$, i.e., if 
$D=NE^{i_1}NE^{i_2}\cdots NE^{i_a} \in \Dyck^m$ if and only if $DE=NE^{i_1}NE^{i_2}\cdots NE^{i_a}E 
\in \DP_{((m+1)^n)}$. Composition is defined in~\cite{LopezPrevilleRatelleRonco2015} as 
follows: let~$D,D_0,D_1\cdots,D_k\in \Dyck^m$ such that the NE word associated to 
$D=NE^{i_1}NE^{i_2}\cdots NE^{i_a}$ has $i_a=k$, then the composition is defined as

$$\tilde \gamma (D;D_0,D_1,\cdots,D_k)=NE^{i_1}NE^{i_2}\cdots ND_0ED_1E\cdots ED_k.$$

From the perspective of $s$-trees under the bijection of Theorem 
\ref{thm:bijection_trees_paths}, it can be shown that this composition can be seen as a special case of the composition 
\[\gamma(T;\bullet,\bullet,\cdots,\bullet,T_0,T_2,\dots,T_k),\] 
where the only non-identity trees 
substituted in $T$ are attached to leaves of $T$ that follow the last internal node when the nodes 
are ordered in preorder. Under this conditions it then follows that
$$\signat(\gamma(T;\bullet,\bullet,\cdots,\bullet,T_0,T_2,\dots,
T_k))=\signat(T)\oplus\signat(T_0)\oplus\cdots\oplus\signat(T_k),$$
and hence the composition in this operadic structure behaves well under concatenation of 
signatures. Here $\T=\bigcup_{s\in \comp}\T_s$ does not have an arity grading anymore but 
instead a grading in terms of the length $\ell(s)$ of the signature.

Finally, algebraic structures can be associated to the two operadic structures discussed 
above to give constructions related to the Hopf algebras of Loday and Ronco for planar binary 
trees \cite{LodayRonco1998}, the Hopf algebras of Novelli and Thibon for 
$(m+1)$-ary trees \cite{NovelliThibon2014} and the operadic constructions on $\Dyck^m$ 
\cite{LopezPrevilleRatelleRonco2015}.

\appendix
\section{Bijections with $s$-Dyck paths}
\label{sec_bijectionsWithDyckPaths}

In this appendix we describe further bijections between $s$-Catalan objects and $s$-Dyck paths. As in the case of $s$-trees, these bijections turn out to be elegant and simple (see Figure~\ref{fig_bijections_sDyckpaths}). They are obtained by assigning some labels to an $s$-Dyck path, and then reading or grouping the labels according to a certain rule. The bijections presented here are in accordance to those presented in Section~\ref{sec_sCatalanzooAndBijections}. More precisely, if $D$ is the $s$-Dyck path corresponding to an $s$-tree~$T$ (according to Section~\ref{sec:trees-DyckPaths}), then an $s$-Catalan object associated to $D$ in this appendix is the same $s$-Catalan object associated to $T$ according to the corresponding bijection in Section~
\ref{sec_sCatalanzooAndBijections}. All the details about these bijections are simple and left to be filled by the interested reader. 

\begin{figure}[htbp]
\begin{center}
\input{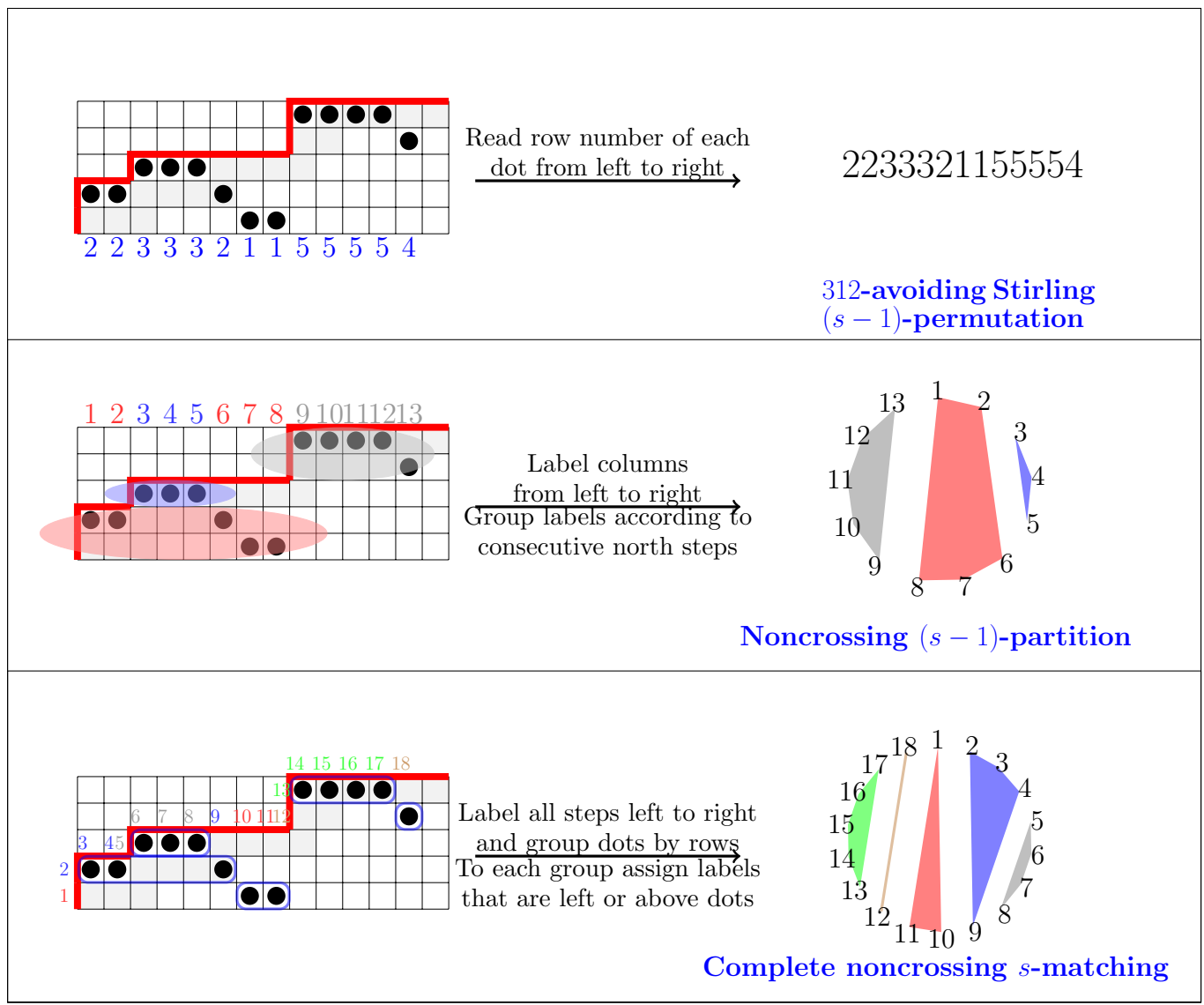}
\caption{Examples of the bijections with $s$-Dyck paths.}
\label{fig_bijections_sDyckpaths}
\end{center}

\end{figure}

\subsection{$s$-Dyck paths and $312$-avoiding Stirling $(s-\bf{1})$-permutations}

Let $\sigma$ be an $(s-\bf{1})$-permutation. We represent the permutation in a $(|s|-\ell(s)+1) \times \ell(s)$ 
rectangle by placing a dot in the square at position $(i,j)$ if $\sigma(i)=j$. Note that all 
the columns of the rectangle except the last one are occupied. 
The top part of Figure~\ref{fig_bijections_sDyckpaths} illustrates an example for the 
$(2,3,3,1,4)$-permutation $\sigma=2233321155554$.  
Given such representation of $\sigma$ we shade all the boxes in the rectangle that are weakly below 
and weakly to the right of some dotted box. It is not hard to see that the shaded boxes bound an 
$s$-Dyck path~$D(\sigma)$. Note that many permutations may give rise the same Dyck path. However, if we restrict to $312$-avoiding Stirling $(s-\bf{1})$-permutations there is exactly one permutation for each Dyck path. This permutation is constructed as follows: place $s(j)-1$ dots, from left to right, in the highest unoccupied row $j$ that is below the path 
$D$, such that no previously occupied columns are taken. The permutation~$\sigma(D)$ is defined as 
the permutation with the resulting rectangular representation.

\subsection{$s$-Dyck paths and noncrossing $(s-\bf{1})$-partitions}
Given an $s$-Dyck path $D$ we place $s(j)-1$ dots in row $j$ for each $j$ as above, and number the columns of the grid (except the last one) from~$1$ to~$|s|-\ell(s)$.   
We make an horizontal strip to the right of each maximal consecutive chain of north steps in $D$. The columns of the dots in each strip form the blocks of the noncrossing $(s-\bf{1})$-partition $\pi(D)$ associated to $D$. The middle part of Figure~\ref{fig_bijections_sDyckpaths} illustrates an example. Note that the number of blocks of the partition is equal, by definition, to the number of peaks of the Dyck path. The inverse map can be easily described as follows: let $\pi=\{\pi_1,\dots,\pi_k\}$ be a noncrossing $(s-\bf{1})$-partition whose blocks are ordered according to the order of their minimal elements. Partition the sequence $s-\bf{1}$ in $k$ consecutive blocks such that the sum of the values in block $i$ is equal to the size $|\pi_i|$.
In the middle of Figure~\ref{fig_bijections_sDyckpaths}, $s-\bf{1}=(2,3,3,1,4)$ and 
\[\pi=\{\{1,2,6,7,8\}, \{3,4,5\},\{9,10,11,12,13\}\}.\]
The sizes of the blocks of $\pi$ are $(5,3,5)$, and so we partition $s-\bf{1}$ as $|2,3|3|1,4|$. 
For $1\leq i \leq k$, define $x_i=\min \pi_i -1$ and $y_i$ equal to the position of the last number in block~$i$ of the partitioned~$s-\bf{1}$. In our running example, 
\[
(x_1,x_2,x_3)=(1-1,3-1,9-1)=(0,2,8),
\]     
\[
(y_1,y_2,y_3)=(2,3,5).
\]
The $s$-Dyck path corresponding to $\pi$ is the path with peaks at coordinates $(x_i,y_i)$ for $i\in [k]$. 

\subsection{$s$-Dyck paths and complete noncrossing $s$-matchings}
\label{sec_bijection_sDyckpaths_smatchings}
Given an $s$-Dyck path~$D$, we place $s(j)-1$ dots in row $j$ for each $j$ as above, and number the steps of $D$ (except the last one) from~$1$ to~$|s|$. For each $1\leq i \leq \ell(s)$, define $M_i$ to be the set containing the label of the north step of $D$ in row $i$, together with the labels of the east steps in the columns of the dots in that row. The complete noncrossing $s$-matching associated to $D$ is defined by~$M(D):=\{M_1,\dots ,M_{\ell(s)}\}$. The bottom part of Figure~\ref{fig_bijections_sDyckpaths} illustrates an example. The inverse is determined by the positions of the north steps in the path, which are the minimal elements of the blocks in $M$.

\section*{Acknowledgements}
This project started in November 2014, during a visit of the second author to the first author at York University and the Fields Institute. We thank the Banting Fellowships Program of the Government of Canada and York University for their financial support and for making this visit possible. 
We also thank Robin Sulzgruber for his input in the description of the bijection between $s$-Dyck paths and complete noncrossing $s$-matchings in Section~\ref{sec_bijection_sDyckpaths_smatchings}; and Nantel Bergeron for the many useful conversations.

\bibliographystyle{plain}
\bibliography{scatalan}

\end{document}